\def\acts{\curvearrowright}
\renewcommand{\d}{\partial }
\DeclareMathOperator\image{im} 
\numberwithin{equation}{section}
\newtheorem{Theorem}{Theorem}[section]
\newtheorem{Prop}[Theorem]{Proposition} 
\newtheorem{Corollary}[Theorem]{Corollary} 
\newtheorem{Lemma}[Theorem]{Lemma}
\newtheorem{Definition}[Theorem]{Definition}
\theoremstyle{definition} 
\newtheorem*{overview}{Overview}
\newtheorem*{Acknowledgments}{Acknowledgments}
\newtheorem*{Question}{Question}
\newtheorem*{Assumption}{Assumption}
\newtheorem{A special case}{A special case}[Theorem]
\newtheorem*{Coarse van Kampen obstruction class}{Coarse van Kampen obstruction class}
\newtheorem*{Coarse cohomology of the configuration space}{Coarse cohomology of the configuration space}
\theoremstyle{remark}
\newtheorem{Remark}[Theorem]{Remark}
\newtheorem{Example}[Theorem]{Example}
\newtheorem*{Notation}{Notation}
\newcommand{\rr}{\mathbb{R}}
\newcommand{\zz}{\mathbb{Z}}
\newcommand{\nn}{\mathbb{N}}
\newcommand{\cU}{\mathcal{U}}
\newcommand{\csu}[1][*]{\myc_{#1}^{\cU}}
\DeclareMathOperator\size{size}
\DeclareMathOperator\diam{diam}
\DeclareMathOperator\cadim{cadim}
\DeclareMathOperator\actdim{actdim}
\DeclareMathOperator\cobdim{cobdim}
\DeclareMathOperator\myc{C}
\DeclareMathOperator\myh{H}
\DeclareMathOperator\Fix{Fix_{\mathrm G}}
\DeclareMathOperator{\Conf}{Conf}
\newcommand{\comment}[1]{}
\newcommand{\C}[1][*]{\myc^{#1}}
\newcommand{\Cas}[1][*]{\myc^{#1}_{as}}
\newcommand{\Cx}[1][*]{\myc{\mathrm X}^{#1}}
\newcommand{\cx}[1][*]{\myc{\mathrm X}_{#1}}
\newcommand{\eCx}[1][*]{\myc{\mathrm X}_{\mathrm G}^{#1}}
\newcommand{\eCb}[1][*]{\myc{\mathrm B}_{\mathrm G}^{#1}}
\newcommand{\eC}[1][*]{\myc^{#1}_{\mathrm G}}
\newcommand{\cff}[1][*]{\myc_{#1}^{F}}
\newcommand{\Cb}[1][*]{\myc{\mathrm B}^{#1}}
\newcommand{\Cz}[1][*]{\myc^{#1}_{0}}
\newcommand{\cs}[1][*]{\myc_{#1}^{\mathrm s}}
\newcommand{\clf}[1][*]{\myc_{#1}^{lf}}
\newcommand{\cf}[1][*]{\myc_{#1}}
\renewcommand{\H}[1][*]{\myh^{#1}}
\newcommand{\Hx}[1][*]{\myh{\mathrm X}^{#1}}
\newcommand{\hx}[1][*]{\myh{\mathrm X}_{#1}}
\newcommand{\h}[1][*]{\myh_{#1}}
\newcommand{\eHx}[1][*]{\myh {\mathrm X}_{\mathrm G}^{#1}}
\newcommand{\eHb}[1][*]{\myh {\mathrm B}_{\mathrm G}^{#1}}
\newcommand{\Hb}[1][*]{\myh {\mathrm B}^{#1}}
\newcommand{\eH}[1][*]{\myh^{#1}_{\mathrm G}}
\newcommand{\rH}[1][*]{\tilde{\myh}^{#1}}
\newcommand{\reH}[1][*]{\tilde{\myh}^{#1}_{\mathrm G}}
\newcommand{\gs}{\sigma }
\newcommand{\gD}{\Delta}
\renewcommand{\ge}{\varepsilon}
\let\oldsubset\subset
\renewcommand{\subset}[1][]{\overset{#1}{\oldsubset}}
\let\oldin\in
\renewcommand{\in}[1][]{\overset{#1}{\oldin}}
\let\oldnotin\notin
\renewcommand{\notin}[1][]{\overset{#1}{\oldnotin}}
\DeclarePairedDelimiter\abs{\lvert}{\rvert}
\newcommand{\Supp}[2][]{\abs{#2}_{#1}}
\DeclarePairedDelimiter\spp{\lVert}{\rVert}
\NewDocumentCommand\ccap{o}{\mathbin{\overset{\mathrm{c}
	\IfNoValueTF{#1}{} {, #1} }
	{\cap}  } }
\NewDocumentCommand\nceq{o}{\mathbin{\overset{\mathrm{c}
	\IfNoValueTF{#1}{} {, #1} }
	{\neq}  } }	
\NewDocumentCommand\cminus{o}{\mathbin{\oset{\mathrm{c}
	\IfNoValueTF{#1}{} {, #1} }
	{-}  } }	
\NewDocumentCommand\ceq{o}{\mathbin{\overset{\mathrm{c}
	\IfNoValueTF{#1}{} {, #1} }
	{=}  } }
\NewDocumentCommand\cneq{o}{\mathbin{\overset{\mathrm{c}
	\IfNoValueTF{#1}{} {, #1} }
	{\neq}  } }	
\NewDocumentCommand\csubset{o}{\subset[\mathrm{c}\IfNoValueTF{#1}{} {, #1} ]}
\newcommand{\oset}[3][0ex]{
  \mathrel{\mathop{#3}\limits^{
    \vbox to#1{\\kern-2\ex@
    \hbox{$\scriptstyle#2$}\vss}}}}
\let\oldminus\-
\newcommand{\DeclareMathActive}[2]{
  
  \expandafter\edef\csname keep@#1@code\endcsname{\mathchar\the\mathcode`#1 }
  \begingroup\lccode`~=`#1\relax
  \lowercase{\endgroup\def~}{#2}
  \AtBeginDocument{\mathcode`#1="8000 }
}
\newcommand{\std}[1]{\csname keep@#1@code\endcsname}
\patchcmd{\newmcodes@}{\mathcode`\-\relax}{\std@minuscode\relax}{}{\ddt}
\begin{document}
\title{Coarse cohomology of configuration space and coarse embedding}
\author{Arka Banerjee}
\address{Department of Mathematics and Statistics\\
Auburn University\\
Auburn, AL~36849}
\email{azb0263@auburn.edu}

\begin{abstract}
    We introduce a notion of equivariant coarse cohomology of the complement of a subspace in a metric space.
      We use this cohomology to define a notion of coarse cohomology of the two-points configuration space of a metric space and develop tools to compute this cohomology under various conditions.
    As an application of this theory, we show that certain classes in the coarse cohomology of two-points configuration space  obstruct  coarse embedding between two metric spaces.
\end{abstract}

\maketitle
 \section{Introduction}
 Van Kampen~\cite{VK33} developed  an obstruction theory for embeddings of $n$-dimensional simplicial complexes into $\rr^{2n}$. 
 A modern approach to his theory uses (co)homology of the two-points configuration space. In this article, we develop an analogous obstruction theory for coarse embedding by introducing a notion of coarse cohomology of two-points configuration space.
 For simplicity, throughout this article, we will say configuration space to mean two-points configuration space. 
 
Let us first briefly describe the classical van Kampen obstruction for a topological space $X$. 
Let $\delta(X)$ be the diagonal set $\{(x,x)\mid x\in X\}\subset X\times X$. 
Consider the following deleted product
 \[\Tilde{X}:= X\times X-\delta(X)=\{(x,y)\in X\times X \mid x\neq y\}
\]
$\Tilde{X}$ has a natural free action by $\zz_2$ by switching the coordinates.
Consider the corresponding $\zz_2$ covering map $q:\Tilde{X}\rightarrow \Tilde{X}/\zz_2$.
The space $\Tilde{X}/\zz_2$ which we  denote by $\Conf(X)$, is the unordered  configuration space of two points in $X$.
There exists a classifying map from the $\zz_2$-bundle $q:\Tilde{X}\rightarrow \Tilde{X}/\zz_2$ to the universal $\zz_2$-bundle $S^\infty \rightarrow \rr P^\infty$ as follows.

\[
\begin{tikzcd}
\Tilde{X} \arrow{r}{\tilde{\phi}} \arrow{d}{q} & S^\infty \arrow{d} \\
\Conf(X) \arrow{r}{\phi} & \rr P^\infty
\end{tikzcd}
\]

If there is an embedding $g:X\hookrightarrow \rr^{n}$, then we can choose $\tilde{\phi}$ so that it factors through $S^{n-1}$. More precisely, we can choose $\tilde{\phi}$ to be the following map $\tilde{X}\rightarrow S^{n-1}\subset S^\infty$
\[(x,y)\mapsto\frac{g(x)-g(y)}{|g(x)-g(y)|}
\]
In this case, $\phi$ maps $\Conf(X)$ to $\rr P^{n-1}\subset \rr P^\infty$.

 So the induced map $\phi^*:\H[n](\rr P^\infty)\rightarrow \H[n](\Conf(X))$ is trivial as it factors through $\H[n](\rr P^{n-1})$.
 In particular, if $\eta^n \in \H[n](\rr P^\infty;\zz_2)$ denotes 
 the nonzero class, then $\phi^*(\eta^n)$ would be trivial. 
 In other words, the cohomology class $\phi^*(\eta^n)$ gives an obstruction for the embedding of  $X$ into $\rr^n$. We will call the class $\phi^*(\eta^n)$  the van Kampen obstruction class of degree $n$ and  denote it by $vk^n(X)$.

 Let us now turn our attention to the coarse world. 
 A map $f:X\rightarrow Y$ between two metric spaces is said to be a \emph{coarse embedding} if there exist two proper non-decreasing maps 
 $\rho_-,\rho_+:[0,\infty)\rightarrow [0,\infty)$ such that 
 \[\rho_-(d(x,y))\leq d(f(x),f(y)) \leq \rho_+(d(x,y)) \quad \text{for all} \quad x,y\in X
 \]
 Roe~\cite{r93} defined the notion of coarse cohomology of a metric space which can be thought of as a coarse analog of Alexander--Spanier
 cohomology in the topological setting. 
 The main motivation of this paper is to define a coarse version of $vk^*$ that obstructs coarse embedding between metric spaces.
 
The study of obstruction to the coarse embedding of finitely generated groups into proper contractible $n$-manifold was initiated by Bestvina, Kapovich, and Kleiner~\cite{BKK}.
For several interesting classes of metric spaces (for example CAT(0) space, Gromov-hyperbolic space) one can attach a boundary at infinity to the space.
Suppose $\partial_\infty X$ denotes such boundary of a space $X$.
A popular theme in the study of large scale geometry is to find topological properties of $\partial_\infty X$ that provide information about the coarse geometry of $X$.
A special case of the work of Bestvina--Kapovich--Kleiner~\cite{BKK} roughly says the following: if  $\partial_\infty X$ does not embed into $\rr^n$ due to the van Kampen embedding obstruction, then $X$ does not coarsely embed into $\rr^{n+1}$ with the Euclidean metric.
  One of the motivations behind the present work was to understand their obstruction in the language of Roe's coarse cohomology.

To this end, recall that the van Kampen obstruction class $vk^*(X)$ lives in the cohomology of the configuration space of $X$, which is the same as the $\zz_2$-equivariant cohomology of $X\times X-\delta(X)$ where the coefficients 
are $\zz_2$ with the trivial $\zz_2$ action.
This suggests that a coarse version of van Kampen obstruction class should live in some coarse version of  $\zz_2$-equivariant cohomology of complement of $\delta(X)$ in $X\times X$.
This motivates us to define a notion of equivariant coarse cohomology of the complement of subspace in a metric space.

 Building on Roe's theory, authors in~\cite{BB20} defined a notion of coarse cohomology of complement of a subspace  in a metric space.
 In this paper, we extend the work of~\cite{BB20} to the equivariant setting.
 We then define the coarse cohomology of the configuration space of $X$ simply to be the $\zz_2$-equivariant coarse cohomology of the complement of $\delta(X)$ in $X\times X$, where the action of $\zz_2$ on $X\times X$ is by switching coordinate.

 Once we have a proper notion of the coarse cohomology of the configuration space, we can search for a coarse $vk^*$ in that cohomology that obstructs coarse embeddings.
 Indeed, when $X$ is a separable metric space, we find a class in the $n^{th}$  degree of the coarse cohomology of the configuration space of $X$, which we denote by $cvk^{n}(X)$, that obstructs coarse embedding of $X$ into $\rr^{n-1}$. 
 \footnote{While the van Kampen obstruction $vk^n$ is
associated to $\rr^n$, note that the coarse van Kampen obstruction $cvk^n$ is related to $\mathbb{R}^{n-1}$. The reason for that is coarse cohomology of the configuration space of $\rr^n$ is same (except in degree 0 and 1) as cohomology of the configuration space of $\rr^{n-1}$ (cf. example~\ref{cconfR^n}).}
In general, the class $cvk^n(X)$ can be used to get obstruction to coarse embedding into any other metric space.
 We define the coarse obstruction dimension $\cobdim(X)$ of a separable metric space $X$ to be  the largest $n$ such that  $cvk^{n}(X)$ is nonzero. One of our main theorem is the following:
 
 \begin{Theorem}\label{main theorem2}
 If $X$ admits a coarse embedding into $Y$, then $\cobdim(X)\leq \cobdim(Y)$.
 \end{Theorem}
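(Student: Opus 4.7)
The plan is to show that a coarse embedding $f:X\to Y$ induces a pullback on the coarse cohomology of the configuration space that is natural with respect to the class $cvk^n$. The inequality $\cobdim(X)\leq\cobdim(Y)$ then follows immediately by contraposition: if $n>\cobdim(Y)$ so that $cvk^n(Y)=0$, then $cvk^n(X)$ must also vanish.

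First I would take the given coarse embedding $f:X\to Y$ and form the product map $f\times f:X\times X\to Y\times Y$. This map is automatically $\zz_2$-equivariant with respect to the coordinate-swap actions and sends $\delta(X)$ into $\delta(Y)$. The crucial quantitative property, arising from properness of the lower gauge $\rho_-$, is that whenever $(x_1,x_2)$ is far from $\delta(X)$ — equivalently, $d(x_1,x_2)$ is large — one has $d(f(x_1),f(x_2))\geq\rho_-(d(x_1,x_2))$ large as well, so $(f(x_1),f(x_2))$ is correspondingly far from $\delta(Y)$. This is precisely the control required for $f\times f$ to induce a morphism between the $\zz_2$-equivariant coarse cohomology theories of the complements of the diagonals developed earlier in the paper.

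Next I would invoke the functoriality of $\zz_2$-equivariant coarse cohomology of the complement of a subspace (the extension of the framework of~\cite{BB20} introduced in this paper) to obtain a pullback $(f\times f)^*$ from the coarse cohomology of $\Conf(Y)$ to that of $\Conf(X)$ in each degree. The construction of $cvk^n(-)$ parallels the topological definition $\phi^*(\eta^m)$ recalled in the introduction: it is the pullback of a universal class under a coarse classifying map. Since coarse classifying maps into the universal $\zz_2$-object should be unique up to equivariant coarse homotopy, the equivariant coarse map induced by $f\times f$ intertwines the classifying maps for $X$ and $Y$, whence the naturality statement $(f\times f)^*(cvk^n(Y))=cvk^n(X)$ follows. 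Taking $n=\cobdim(X)$ and using that $cvk^n(X)\neq 0$ then forces $cvk^n(Y)\neq 0$, proving the theorem.

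The main obstacle I anticipate is not the equivariance, which is immediate, nor the final conclusion, which is formal; rather it is the verification that $(f\times f)^*$ is well-defined on the equivariant coarse cohomology of the complement of the diagonal. This demands that the machinery developed in the earlier sections of the paper be genuinely functorial under equivariant coarse maps of pairs that send one subspace into the other in a controlled way, and that the universal class used to define $cvk^n$ behave naturally with respect to such pullbacks. Once those structural properties are confirmed, the properness of $\rho_-$ supplies the analytic input ensuring the compatibility of $f\times f$ with the relative coarse structure, and the remainder of the argument is essentially a diagram chase.
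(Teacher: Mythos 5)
Your proposal matches the paper's argument. The paper proves this via Theorem~\ref{t:cobdim increases}: the coarse embedding $f:X\to Y$ induces $f\times f$, which is a relatively coarse map of pairs and hence yields $(f\times f)^*$ by Lemma~\ref{exp map}; then Proposition~\ref{T:universality} (your ``uniqueness of coarse classifying maps'') gives $cvk^n(X)=(f\times f)^*(cvk^n(Y))$ by computing $cvk^n(X)$ via the composite $g\circ f:X\to\ell^\infty$, and the inequality follows by contraposition exactly as you describe, with the low-degree cases $\cobdim(X)\in\{0,1\}$ handled separately since $cvk^n$ is only defined for $n\ge 2$.
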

 So one way to determine when a given space does not admit coarse embedding into another space is to compare their $\cobdim$.
 However, the equivariant coarse cohomology of the complement where $cvk^*$ lives, is hard to compute in general. 
 A big part of this paper is devoted to the computation of equivariant coarse cohomology of the complement for certain spaces which may be of independent interest.
 We use these computations to estimate $\cobdim$ of certain spaces and obtain coarse embedding obstruction between certain classes of spaces.
 Below we highlight some of our results in this direction.  
 \begin{itemize}
 \item We show that 
 $\cobdim(\rr^n)=n$. Hence Theorem~\ref{main theorem2} implies that $X$ does not admit coarse embedding into $\rr^{n-1}$ if $\cobdim(X)\geq n$.

 \item Suppose  $X=K\times [0,\infty)/K\times \{0\}$ is the open cone on a finite simplicial complex $K$.
 A metric $d$ on $X$ is called expanding if for any two disjoint simplices $\gs,\tau\in K$ and $S\geq 0$, there exists $r\geq 0$ such that $d(\gs\times [r,\infty),\tau\times [r,\infty))\geq S$.
 If there is a class $c\in \h[n](\Conf(X))$ such that $vk^{n}(X)(c)\neq 0$, then we show that $\cobdim(X)\geq n+1$ whenever $X$ is equipped with a proper, expanding metric~(Example \ref{BKK obstruction}).
         Hence such $X$ does not admit coarse embedding into $\rr^n$ by Theorem~\ref{main theorem2} and the previous example. 
         This was initially proved by Bestvina--Kapovich--Kleiner~\cite{BKK}.

     \item If $X$ is a proper, uniformly acyclic $n$-manifold with uniformly locally acyclic boundary, then $\cobdim(X)\leq n$ (Theorem \ref{cobdim of manifold}).
         Examples of such spaces include universal cover of compact aspherical $n$-manifolds. 
         Hence Theorem~\ref{main theorem2} implies that, if $\cobdim(X)\geq n$, then $X$ does not admit coarse embedding into the universal cover of any compact aspherical $(n-1)$-manifold. 

         \item If $\Hx(X^2-\delta(X))=0$ for $*\leq n-1$, then $\cobdim(X)\geq n$ (Theorem~\ref{t:cobdim of unifcontr}).
     This implies, in particular, that any proper, uniformly contractible $n$-manifold has $\cobdim\geq n$.
    Hence it follows from the last example that any proper, uniformly contractible $n$-manifold does not admit coarse embedding into the universal cover of any compact aspherical $(n-1)$-manifold (see Corollary~\ref{lb of cob} for a more general version).
    This recovers a result of Yoon~\cite{Yoon}.
     
         \item If a finitely generated group $G$ acts properly on $X$ by isometries, then there exists a coarse embedding of $G$ (equipped with a word metric coming from a finite generating set) into $X$ by mapping $G$ into an orbit of the action.
 That means, from the coarse point of view, any space $X$ with a proper $G$-action has to be at least as large as $G$. 
More precisely, using Theorem~\ref{main theorem2} we show that if $G$ acts properly, cocompactly on a contractible manifold $M$ (possibly with boundary), then $\dim(M)\geq \cobdim(G)$ (Corollary \ref{cadim>cobdim}).

     \end{itemize}

Finally, we remark that equivariant coarse (co)homology has been previously studied in~\cite{Bunke} and~\cite{Wulff}. While our approach focuses specifically on metric spaces to address the obstruction theory and computations relevant to this article, the theories developed in~\cite{Bunke} and~\cite{Wulff} apply to more general spaces (referred to as coarse spaces) and are motivated by certain coarse K-theoretic considerations (as discussed in the introduction of~\cite{Wulff}). It would be intriguing to explore the extent to which our theory and computations relate to those in~\cite{Bunke} and~\cite{Wulff}.
  
  \begin{overview}  
 In section~\ref{preliminaries} we describe several variations of Alexander–Spanier cochains and the corresponding cohomology theories.
 We also introduce coarse language and define coarse (co)homology. 
 In section~\ref{s:coa of compl},
 we define the equivariant coarse cohomology of the complement and give some examples using Theorem~\ref{c:equicoa=coaquo} that relate, for certain cases,  equivariant coarse cohomology to the Alexander--Spanier cohomology of the quotient.
 In section~\ref{s:computaion of equiv} we prove Theorem~\ref{c:equicoa=coaquo}.
 In section~\ref{s:coarse configuration}, we define the coarse cohomology of the configuration space.
 In section~\ref{s:cvk}, we give a class in the coarse cohomology of the configuration space that obstructs coarse embedding maps.
 Then we define the coarse obstruction dimension of a space and prove Theorem~\ref{t:cobdim increases} which is a slightly stronger version of Theorem~\ref{main theorem2}.
 In section~\ref{relation between van kampens}, we give a relation between classical van Kampen obstruction and coarse van Kampen obstruction.
 We use this relation to compute coarse van Kampen obstruction for certain Euclidean Cones on simplicial complexes. 
 In section~\ref{s:coarse gysin}, we produce a coarse version of the Gysin sequence for the $\zz_2$-bundle to compute the coarse cohomology of configuration space.
 In section~\ref{s:ub} and~\ref{s:lb}, we use the coarse Gysin sequence to estimate $\cobdim$ of certain spaces.
 \end{overview}
\begin{Acknowledgments}
Part of this paper was written while the author was a graduate student at the University of Wisconsin–Milwaukee. The author would like to express sincere gratitude to his advisor, Boris Okun, for his valuable insights and guidance during the preparation of this work. The author also wishes to thank Kevin Schreve for helpful discussions. Finally, the author is grateful to the anonymous referee for her/his detailed and constructive comments on an earlier version of this manuscript, which significantly improved its presentation.
\end{Acknowledgments}

 \section{Preliminaries}\label{preliminaries}

\subsection{Alexander--Spanier complexes} 
We will refer to points in $X^{n+1}$ as \emph{$n$-simplices}.
 Let $R$ be an abelian group.
We will think of functions $X^{n+1} \to R$ as $n$-cochains or $n$-chains on $X$, depending on the context, and in the latter case will use  additive notation $c=\sum_{\gs\in X^{*+1}}c_\gs \gs$.

The basic complex is the complex of finitely supported chains
\[
\cf(X;R):=\{c=\sum_{i=0}^n c_i \sigma_i \mid c_i \in R \text{ and } \sigma_i\in X^{*+1}\}
\]
equipped with the usual  boundary map, defined on the basis by 
\[
\partial (x_0,\cdots,x_n):=\sum_{i=0}^n (-1)^i(x_0,\dots,\hat{x}_i,\dots,x_n).
\]

Note that the boundary map $\partial$ is well-defined on a larger complex of \emph{locally finite} chains $\clf(X; R)$ which consists of chains $c$ satisfying the following property: for any bounded $B \subset X$ only finitely many simplices in $c$ have vertices in $B$.

The algebraic dual of  $\cf(X)$ is the complex of all Alexander--Spanier cochains
\[
\C[*](X;R)=\{\phi: X^{*+1} \to R\}
\]
with the coboundary operator  
\[
d(\phi)(x_0,\dots,x_n)=\sum_{i=0}^n(-1)^{i} \phi(x_0,\dots,\hat{x}_i,\dots,x_n).
\]
\begin{Lemma}\label{l:C is acyclic}
    $\C(X;R)$ is an acyclic complex i.e the homology of the complex is trivial in each degree except at degree zero where it is isomorphic to $R$.
\end{Lemma}
\begin{proof}
In degree zero, the cohomology consists of all the constant functions $X\rightarrow R$ which is isomorphic to $R$.

  To prove that the cohomology is trivial in degree $\geq 1$,   choose $a\in X$. 
Consider the following cone operator $D_*:\cf(X)\rightarrow \cf[*+1](X)$ 
\begin{equation*}\label{Coneoperator}
   D_n:(x_0,x_1,\ldots,x_n)\mapsto (a,x_0,\ldots,x_n). 
\end{equation*} 
For any $n$-simplex $\gs$ with $n\geq 1$, we observe that $D_*\partial(\gs)+\partial D_*(\gs)=\gs$.
By considering the dual, we have that for any cochain $\phi\in \C(X;R)$ in degree $\geq 1$,
\[
dD^*(\phi)+D^*d(\phi)=\phi.
\]
In particular,
$dD^*(\phi)=\phi$ when $\phi$ is a cocycle.
The claim follows.
\end{proof}

All the cochain complexes that we will consider in this paper would be subcomplexes 
of the complex $(\C,d)$.

For a cochain $\phi\in \C(X)$, let $||\phi||$ be the intersection of the diagonal $\gD=\{(x,x,\ldots, x)\mid x\in X\}\subset X^{*+1}$ and the closure of the support of the function $\phi:X^{*+1}\rightarrow R$.
Let $\Cz(X;R)$ be the complex of locally zero cochains:
\[
\Cz(X;R) = \{\phi \in \C(X;R) \mid \spp{\phi} =\emptyset \}.
\]
 Note that the restriction of $d$ gives a well defined map $\C_0(X)\rightarrow \C[*+1]_0(X)$. 
 Consequently, $d$ induces a well defined map $\Cas(X;R)\rightarrow \Cas[*+1](X;R)$, where
 \[\Cas(X;R)=\C(X;R)/\Cz(X;R).\] 
Alexander--Spanier cohomology, denoted by $\H(X;R)$, is the cohomology of the complex $(\Cas(X;R),d)$.
We will denote by $\rH(X;R)$ the reduced Alexander--Spanier cohomology.

\subsection{Coarse inclusion}We adopt the notation from~\cite{msw11}. Let $(X,d)$ be a metric space.
For $A \subset X$ and $r\geq 0$, we denote $N_{r}(A)=\{x \in X \mid d(x, A) \leq r \}$.
We will call such neighborhoods \emph{metric neighborhoods} of $A$.
We will say that $A$ is \emph{$r$-contained} in $B$, $A \subset[r] B$, if $A \subset N_{r}(B)$.
$A$ is \emph{coarsely contained} in $B$, $A \csubset B$, if $A \subset[r] B$ for some $r\geq 0$.
Two subsets are \emph{coarsely equal}, $A \ceq B$ if $A \csubset B$ and $B \csubset A$.

\subsection{Coarse intersection} Now we recall from~\cite{msw11} the notion of \emph{coarse intersection}: $A \ccap B \ceq C$ if for all sufficiently large $r\geq 0$, $N_{r}(A) \cap N_{r}(B) \ceq C$.
The coarse intersection is not always well-defined, it may happen that the coarse type of $N_{r}(A) \cap N_{r}(B) $ does not stabilize as $r$ goes to infinity.
However the notion ``coarse intersection is coarsely contained in'' is well-defined.
$A \ccap B \csubset C$ means that for any $r\geq 0$, $N_{r}(A) \cap N_{r}(B) \csubset C$.
It is not hard to see that this condition is equivalent to the condition that for any $r\geq 0$, $A \cap N_{r}(B) \csubset C$.

\begin{Notation}
   From now on, all spaces will be assumed to be metric spaces unless stated otherwise. We will use the letter 
$r$ to represent a non-negative real number, while 
$R$ will denote an abelian group unless specified otherwise
\end{Notation}
\subsection{Coarse (co)homology}
In what follows, we will need to measure distances between simplices of different dimensions.
A convenient way to do this is to stabilize simplices by repeating the last coordinate, as follows.
Denote by $X^\infty$ the subset of the product of countably many copies of $X$, consisting of eventually constant sequences.  
Equip $X^{\infty}$ with the $\sup$ metric.
Let $i:X^{n+1} \to X^\infty$ denote the map $ (x_{0},\dots,x_{n}) \mapsto (x_{0},\dots,x_{n}, x_{n},x_{n},\dots)$.
For a function  $\phi:X^{n+1} \to R$ define its stabilized support
\[
\Supp{\phi} = \{i(\gs) \mid \gs \in X^{n+1} \text{ and } \phi(\gs) \neq 0\} \subset X^{\infty}.
\]
Let $\gD=i(X)$ denote the diagonal of $X^{\infty}$.
Define the support of $\phi$ at scale $r$ to be $\Supp[r]{\phi} = \Supp{\phi} \cap N_{r}(\gD)$.

We now define coarse (co)homology theories, following Roe~\cite{r93} and Hair~\cite{h10} using our language.
The coarse cochain complex is
\[
\Cx(X;R) := \{\phi \in \C(X;R) \mid \Supp[r]{\phi}\text{ is bounded for all }r \}.
\]
An equivalent way to define coarse cochains is to require the support to be coarsely disjoint from the diagonal:
\[
\Cx(X;R) = \{\phi \in \C(X;R) \mid \Supp{\phi} \ccap \gD \ceq * \}.
\]
The coboundary operator $d$ preserves this  property, and 
the coarse cohomology $\Hx(X;R)$ is defined to be the cohomology of  the complex $(\Cx(X;R),d)$. The coarse homology\footnote{We will need coarse homology only in Section~\ref{s:ub} for a brief discussion of coarse PD($n$) spaces. For the rest of the paper, we will work with coarse cohomology.} $\hx(X; G)$ is the homology of the following subcomplex of $\clf(X; G)$
\[
\cx(X; R):=\{c\in \clf(X; R)\mid \Supp{c}\csubset \gD\}
\]
equipped with the restriction of the boundary operator $\partial$.
Note that in the presence of the support condition $\Supp{c}\csubset \gD$ local finiteness is equivalent to $\Supp{c}$ having finite intersections with bounded subsets of $X^{\infty}$.
\begin{Example}
    Roe~\cite{r93} showed that the coarse cohomology is isomorphic to the compactly supported Alexander--Spanier cohomology if the space is uniformly contractible.
    In particular, this applies to the universal cover of finite aspherical complexes.
    In this case, the coarse homology is isomorphic to the locally finite homology~\cite{r93}*{Chapter 2}.
    For example,
    \[
    \hx(\rr^n; R)=\Hx(\rr^n; R)=
    \begin{cases}
        R & *=n, \\
        0 & \text{otherwise}.
    \end{cases}
    \]
\end{Example}

%%%%%%%%%%%%%%%%%%%%%%%%%%%%%%%
\section{Equivariant coarse cohomology of the complement}\label{s:coa of compl}
 
  We start by briefly reviewing the notion of coarse cohomology of the complement from~\cite{BB20}.
 Roughly the idea is that the coarse complement of a subset $A$ in $X$ is determined by the collection of subsets $\mathcal{S}$ of $X$ which are \textit{coarsely disjoint} from $A$:
\[\mathcal{S}:=\{B\subset X \mid N_r(B)\cap N_r(A) \text{ is bounded for any $r\geq 0$} \}
\]
Coarse cohomology of the complement of $A$, denoted by $\Hx(X-A)$,  is defined to be the cohomology of the following complex with the usual coboundary operator $d$ mentioned in the previous section. 
\[	\Cx[n](X-A;R)= \{ \phi \in \C[n](X;R) \mid  \phi|_{B}\in \Cx[n](B;R) \text{ for all } B \in \mathcal{S}\}
\]

Recall that $i:X\to X^\infty$ denote the map $x\mapsto (x, \dots, x, \dots)$ and $\gD=i(X)$.
For a subset $A\subset X$, we denote the set $i(A)\subset \gD$ by $\gD_A$.
For our purpose, we will work with the following equivalent description of $\Cx(X-A)$ (see~\cite{BB20}, Lemma 3.2)
\[
\Cx[n](X-A;R)= \{ \phi \in \C[n](X;R) \mid \Supp{\phi} \ccap \gD \csubset \gD_A \}
\]
Note that this description of coarse cochains of the complement is  closer to the spirit of Roe's original definition of coarse cochains discussed in the previous section.
\begin{Example}
    When $X\ceq A$, we can see that $\Cx(X-A;R)$ coincides with $\C(X;R)$ because in this case $\gD\csubset \gD_A$ and therefore the support condition $\Supp{\phi} \ccap \gD \csubset \gD_A$ holds for any $\phi\in \C(X;R)$. 
    Since $\C(X;R)$ is an acyclic complex,
     in this case $\Hx(X-A;R)$ is trivial in all degrees except at degree $0$ where it is isomorphic to $R$.
\end{Example}
\begin{Example}
    If $A$ is bounded, then $\Cx(X-A;R)$ coincides with $\Cx(X;R)$.
    Hence, $\Hx(X-A,R)=\Hx(X;R)$ whenever $A$ is bounded.
\end{Example}
\begin{Example} 
As explained in~\cite{BB20}*{Theorem 5.9}, the elements of $\Hx[1](X-A;\zz_2)$ correspond to the `coarse complementary components' of $A$ inside $X$.
More precisely, if $X$ is a geodesic space and $A\subset X$ and $k=\dim_{\zz/2} \Hx[1](X-A; \zz/2)$ is finite, then
     there exists $r\geq 0$ such that for any $L\geq r$, $X-N_L(A)$ has exactly $k+1$ deep path components,
    where deep path components are those path components which are not coarsely contained in $A$.  
    \end{Example}
\subsection{Equivariant coarse cohomology of the complement} We now define an equivariant version of the coarse cohomology of the complement. 
Let $G$ be a group acting on $X$ by isometries and  $R$ be an abelian group with a $G$-action. 
Suppose $G$ acts on $X^{n+1}$ by the diagonal action, $g (x_0,\dots,x_n):=(g x_0,\dots, g x_n)$.
$G$-equivariant coarse cohomology of the complement of $A$ in $X$, denoted by $\eHx(X-A;R)$, is defined to be the cohomology of the following cochain complex  
\[ 
\eCx(X-A;R):= \{\phi\in \Cx(X-A;R) \mid \text{$\phi$ is $G$-equivariant}\}
\]
with the usual coboundary operator $d$.

In particular, if $G$ is acting trivially on $X$ and $R$, then $\eCx(X-A;R)$ coincides with $\Cx(X-A;R)$ and therefore $\eHx(X-A;R)=\Hx(X-A;R)$.
While $\eHx$ is hard to compute in general,
 we can relate $\eHx$ to a  more computable cohomology under certain acyclicity conditions which we define below.
 \begin{Definition}
$X$ is locally acyclic with coefficients in $R$ if for any $x\in X$ and a neighborhood $U$ of $x$, there exists another open neighborhood $V\subset U$ of $x$ such that the inclusion $V\hookrightarrow U$ induces trivial map in the singular homology  with coefficients in $R$. 

 $X$ is called locally acyclic away from $A$ with coefficients in $R$  if for some $r$, $X-N_r(A)$ is locally acyclic with coefficients in $R$.
\end{Definition}
 \begin{Example}
 Any locally finite simplicial complex is locally acyclic.
     If a space is locally acyclic, then it is locally acyclic away from any of its subset.
     However, the converse is not true.
     For instance, any bounded metric space is locally acyclic away from any subset.
 \end{Example}

 \begin{Definition}
 $X$ is uniformly acyclic with coefficients in $R$ if there exists a non-decreasing function $\rho:\rr_{\geq 0}\rightarrow \rr_{\geq 0}$ such that for any $B\subset X$ the inclusion  map $B\hookrightarrow N_{\rho(\diam(B))}(B)$ induces trivial map in the singular homology with coefficients in $R$.
 
 $X$ is called uniformly acyclic away from $A\subset X$ with coefficients in $R$ if there exist two non-decreasing functions $\rho,\mu:\rr_{\geq0}\rightarrow \rr_{\geq 0}$  such that for any $B\subset X$ the inclusion  map $B\hookrightarrow N_{\rho(\diam(B))}(B)$ induces trivial map in the singular homology with  coefficients in $R$ if $d(B,A)\geq \mu(r)$.
 \end{Definition}
\begin{Example}
 Universal covers of compact aspherical complexes are uniformly acyclic. Any uniformly acyclic space is uniformly acyclic away from any of its subset.
 Moreover, if we remove a bounded set from a uniformly acyclic space, then the resulting space is uniformly acyclic away from any point.
 In general, uniform acyclicity away from a point can be very far from being uniformly acyclic.
 Figure~\ref{unif at inf eg} describes such an example.
\end{Example}

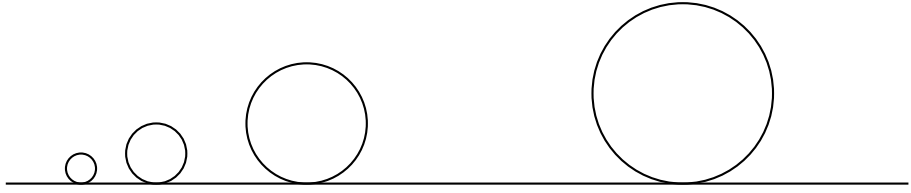
\begin{figure}\label{unif at inf eg}
\centering
\begin{tikzpicture}
    \draw[thick] (0,0)--(12,0);
    \draw[thick] (1,.2) circle(.2cm);
    \draw[thick] (2,.4) circle(.4cm);
    \draw[thick] (4,.8) circle(.8cm);
    \draw[thick] (9, 1.2) circle(1.2cm);
\end{tikzpicture}
 \caption{ A  subspace of $\rr^2$ that consists of a countable union of circles $\{C_i\}_{i\in\nn}$ and the ray $\mathcal{R}:=[0,\infty)\times \{0\}$ such that the $i^{th}$ circle has radius $i$ and touches $\mathcal{R}$ at $(i^2,0)$ so that the distance between two consecutive  circles grows to infinity. This space is uniformly acyclic away from any bounded set.}
    \label{fig:pack of circles}
\end{figure}

 For the rest of the paper $G\acts X$ will mean that $G$ is acting on $X$ by isometries and $G\acts(X,A)$ will mean  $G\acts X$ and $GA=A$.
 We let 
 \[\Fix(X):=\{x\in X\mid gx=x \text{ for all } g\in G\}.\] 
We now state a theorem that relates the equivariant coarse cohomology of the complement and the reduced Alexander--Spanier cohomology for certain spaces.

\begin{Theorem}\label{c:equicoa=coaquo}
 Suppose  $G\acts (X,A)$ and $\Fix(X)\neq \emptyset$.
Let $R$ be an abelian group with trivial $G$-action.     Suppose $X$ is uniformly acyclic away from $A\subset X$ with coefficients in $R$ and  locally acyclic away from $A$ with coefficients in $R$. 
     \begin{enumerate}
     \item \label{i:equicoa=coaquo1} If $X\csubset A$, then
     \begin{align*}
         \eHx(X-A;R)=\begin{cases}
         R & \text{if $*=0$},\\
         0 & \text{otherwise}
         \end{cases} 
     \end{align*}
     \item \label{i:equicoa=coaquo2} If $X$ is not coarsely contained in $A$, then
     \begin{align*}
         \eHx(X-A;R)=\begin{cases}
         0 & \text{if $*=0$},\\
         \varinjlim \rH[*-1]((X-N_r(A))/G;R) & \text{otherwise}
         \end{cases} 
     \end{align*}
      where  ${\rH}(-)$ is the reduced Alexander--Spanier cohomology.    
     \end{enumerate}
\end{Theorem}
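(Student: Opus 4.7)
My plan is to derive both cases from a single short exact sequence of equivariant cochain complexes, exploiting the fixed-point hypothesis to make the ambient complex acyclic and the two acyclicity hypotheses to identify the quotient with Alexander--Spanier cochains of $(X - N_r(A))/G$.

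First, I would observe that $\Fix(X) \neq \emptyset$ together with triviality of the $G$-action on $R$ makes $\eC(X; R)$ acyclic except in degree $0$, where it is $R$: picking $p \in \Fix(X)$, the cone-off chain homotopy $(h\phi)(x_0, \ldots, x_{n-1}) := \phi(p, x_0, \ldots, x_{n-1})$ is equivariant because $p$ is fixed, so the augmentation $\eC(X; R) \to R$ is a quasi-isomorphism. Case~\eqref{i:equicoa=coaquo1} is then immediate: when $X \csubset A$ we have $X \ceq A$, hence $\gD \ceq \gD_A$, and the support condition defining $\eCx(X-A; R)$ is vacuous, so $\eCx(X-A; R) = \eC(X; R)$.

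For Case~\eqref{i:equicoa=coaquo2}, consider the short exact sequence
\begin{equation*}
    0 \to \eCx(X-A; R) \to \eC(X; R) \to Q^* \to 0, \qquad Q^* := \eC(X; R)/\eCx(X-A; R).
\end{equation*}
The long exact sequence, combined with the previous step, yields $\eHx[*](X-A; R) \cong H^{*-1}(Q^*)$ for $* \geq 2$, $\eHx[1](X-A; R) \cong \operatorname{coker}(R \to H^0(Q^*))$, and $\eHx[0](X-A; R) \cong \ker(R \to H^0(Q^*))$. The degree~$0$ analysis is direct: a $0$-cochain in $\eCx(X-A;R)$ is a $G$-invariant function $X \to R$ whose support is coarsely contained in $A$, and the cocycle condition forces it to be constant; since $X \not\csubset A$, such a constant must vanish, so $\eHx[0](X-A;R) = 0$ and the cokernel becomes reduced $\rH[0]$.

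The main obstacle is identifying $H^*(Q^*)$ with $\varinjlim \rH[*]((X - N_r(A))/G; R)$. My approach is to use restriction maps into equivariant Alexander--Spanier cochain complexes of $X - N_r(A)$: the coarse support condition says precisely that $\phi \in \eC(X;R)$ lies in $\eCx(X-A; R)$ iff for every $r$ its restriction to $(X-N_r(A))^{*+1}$ is locally zero, so $Q^*$ fits into a direct system over $r$ of equivariant Alexander--Spanier-type complexes modulo locally zero cochains. Uniform acyclicity away from $A$ lets me replace Alexander--Spanier cochains on $X - N_r(A)$ with cochains controlled at a fixed scale via an iterated subdivision/acyclicity argument in the spirit of~\cite{BB20}, while local acyclicity guarantees that these controlled cochains compute the correct cohomology. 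Finally, because equivariant cochains modulo locally zero cochains are germs near the diagonal, and on the diagonal the diagonal $G$-action on $X^{n+1}$ coincides with the given $G$-action on $X$, the equivariant theory descends to the Alexander--Spanier cohomology of the quotient $(X - N_r(A))/G$; passing to the direct limit in $r$ yields the formula. The delicate part is coordinating the coarse scale with the Alexander--Spanier scale equivariantly, which is where both acyclicity hypotheses intervene in tandem.
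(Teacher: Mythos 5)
Your plan for Case~\eqref{i:equicoa=coaquo1} is correct and matches the paper: the equivariant cone operator off a fixed point $p \in \Fix(X)$ kills the cohomology of the full equivariant cochain complex $\mathscr{C}_G^*(X;R)$ (which is what you denote $\eC(X;R)$) in positive degrees, and when $X \ceq A$ the coarse support condition is vacuous so $\eCx(X-A;R) = \mathscr{C}_G^*(X;R)$.

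For Case~\eqref{i:equicoa=coaquo2} there is a genuine gap in the identification of the quotient. You assert that the coarse support condition ``says precisely that $\phi$ lies in $\eCx(X-A;R)$ iff for every $r$ its restriction to $(X - N_r(A))^{*+1}$ is locally zero,'' but this is false. The condition ``restriction to $(X-N_r(A))^{*+1}$ is locally zero for all $r$'' characterizes the \emph{boundedly supported} complex $\eCb(X-A;R)$, defined by $\spp{\phi} \csubset \gD_A$, which is a condition only on the closure of the support \emph{on} the diagonal. Membership in $\eCx(X-A;R)$ is much stronger: it requires $\Supp[r]{\phi} \csubset \gD_A$ at \emph{every} scale $r$. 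As a concrete example, take $X=\rr$, $A=\{0\}$, and let $\phi$ be the $1$-cochain with $\phi(x,x+1)=1$ and $\phi=0$ otherwise; then $\phi$ is locally zero on every $X-N_r(A)$, hence lies in $\eCb$, but $\Supp[2]{\phi}$ is unbounded and not coarsely contained in $\gD_A$, so $\phi \notin \eCx$. Consequently $Q^* := \mathscr{C}_G^*(X;R)/\eCx(X-A;R)$ is \emph{not} $\varinjlim \eC(X-N_r(A);R)$; there is an extra piece $\eCb(X-A;R)/\eCx(X-A;R)$ in between, and showing its cohomology vanishes is exactly the hard step.

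That step is where the uniform and local acyclicity hypotheses are really spent, and it is a separate theorem in the paper (the equivariant ``connect-the-dots'' argument: build a controlled equivariant chain map $S$ from finitely supported chains into small singular chains, together with a controlled equivariant chain homotopy $H$ to the identity, and use $H^*$ to push a boundedly supported cocycle into a coarse one up to coboundary). Your proposal gestures at a ``subdivision/acyclicity argument in the spirit of~\cite{BB20}'' but applies it to computing $H^*(Q^*)$, which is the wrong place; the argument is needed to establish $\eHx(X-A;R) \cong \eHb(X-A;R)$ \emph{before} you can set up the short exact sequence whose quotient is $\varinjlim \eC(X-N_r(A);R)$. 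Once that comparison is in hand, the rest of your plan (the long exact sequence, the degree-$0$ analysis, and Honkasalo's identification of equivariant Alexander--Spanier cohomology with the cohomology of the quotient) is correct and is exactly what the paper does.
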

We will postpone the proof of Theorem\ref{c:equicoa=coaquo} until the next section. We conclude this section with an example.

\begin{Example}\label{conf(R^n)}
    Consider the action of $\zz_2$ on $\rr^{n}$ by antipodal map. Let $M$ be a codimension $k$ vector subspace. Then for any abelian group $R$ with a trivial action of $\zz_2$ and $i\geq 1$, we have
     \begin{align*}
         \eHx[i](\rr^n-M; R) &= \varinjlim \rH[i-1]((\rr^n-N_r(M))/\zz_2;R)\quad [\text{by Theorem } \ref{c:equicoa=coaquo}] \\
         &=\rH[i-1]((\rr^{n}-M)/\zz_2; R)\\
         &=\rH[i-1](\rr P^{k-1};R)
     \end{align*}  
  The second equality holds because $\rr^n-N_r(M)$ is $\zz_2$-equivariantly homotopic to $\rr^n-M$ and the last equality follows because $\rr^{n}-M$ is $\zz_2$-equivariantly homotopic to $S^{k-1}$ with the antipodal  $\zz_2$-action. 
\end{Example}
    
\section{Computation of equivariant coarse cohomology}\label{s:computaion of equiv}
In this section, our main goal is to prove Theorem~\ref{c:equicoa=coaquo}.
The proof is similar to the proof of a non-equivariant version of Theorem~\ref{c:equicoa=coaquo} proved in~\cite{B24}*{Corollary 3.5}.
The key is to relate the coarse cohomology of complement to the boundedly supported cohomology of the complement which we introduce next.
\subsection{Boundedly supported cohomology of the complement}
Recall that $i:X\to X^\infty$ denotes the map $x\mapsto (x,\ldots,x,\ldots)$ and $\gD=i(X)$,
For a subset $A\subset X$, we denote the set $i(A)\subset \gD$ by $\gD_A$.
Also recall that for a cochain $\phi\in \C(X)$,  $||\phi||$ denotes the intersection of the diagonal $\gD=\{(x,x,\ldots, x)\mid x\in X\}\subset X^{*+1}$ and the closure of the support of the function $\phi:X^{*+1}\rightarrow R$.
Boundedly supported cohomology of the complement of $A\subset X$, denoted by $\Hb(X-A)$, is the cohomology of the following cochain complex with $d$ being the coboundary operator.
\[
\Cb(X-A;R):=\{\phi \in \C(X;R)\mid \spp{\phi} \csubset \gD_A \}
\]

Suppose $R$ is an abelian group with a $G$-action. We define the equivariant boundedly supported cohomology of the complement with coefficients in $R$ to be the cohomology of the following  cochain complex.
\[
	\eCb(X-A;R):=\{\phi \in \Cb(X-A;R) \mid \phi \text{ is $G$-equivariant}\}
\]
We denote the cohomology of the above complex by $\eHb(X-A;R)$. 

Our next goal is to relate $\eHb$ to the equivariant Alexander--Spanier cohomology.
We start by briefly recalling the notion of equivariant 
Alexander--Spanier cohomology.
\subsection{Equivariant Alexander-Spanier cohomology }Honkasalo defined a notion of equivariant Alexander--Spanier cohomology in~\cite{honkasalo}. 
Let $R$ be an abelian group with a $G$-action.
Consider the following cochian complex
\begin{equation*}\label{equivariant cochain}
    \mathscr{C}_G^*(X;R):=\{\phi\in \C(X;R) \mid \phi \text{ is $G$ equivariant}\}\hfill \tag{$\dagger$}
 \end{equation*}   
    The equivariant Alexander--Spanier cochain complex is defined as follows
\[\eC(X;R):=\mathscr{C}_G^*(X;R)/\mathscr{C}_G^*(X;R)\cap \Cz(X;R)
\]
with the usual coboundary operator $d$.
We will denote the cohomology of this complex by $\eH(X;R)$.

The following theorem of Honkasalo relates the $G$-equivariant Alexander--Spanier cohomology with the Alexander--Spanier cohomology of the quotient by the $G$-action.

\begin{Theorem}[cf. Corollary 6.8~\cite{honkasalo}]\label{equi=ord of quotient}
Let $R$ be an abelian group with the trivial $G$-action. There is a natural isomorphism 
\[\eH(X;R) \cong {\H}(X/G;R)
\] 
where the right-hand side is the ordinary Alexander--Spanier cohomology of $X/G$.
\end{Theorem}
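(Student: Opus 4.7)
The plan is to construct the isomorphism explicitly as the map induced by pullback along the quotient $\pi\colon X\to X/G$, and to verify it is an isomorphism via a direct-limit-over-covers argument that reduces the claim to a local computation. Define $\pi^{\ast}\colon \Cas(X/G;R)\to \eC(X;R)$ by $\pi^{\ast}\phi(x_0,\dots,x_n)=\phi(\pi(x_0),\dots,\pi(x_n))$. Because $R$ carries the trivial $G$-action and $\pi(gx)=\pi(x)$ for every $g\in G$, the cochain $\pi^{\ast}\phi$ is automatically $G$-equivariant. The map commutes with the combinatorial coboundary $d$ and sends locally zero cochains to locally zero cochains, since $X^{n+1}\to (X/G)^{n+1}$ carries the diagonal to the diagonal, so it descends and induces a natural map $\H(X/G;R)\to \eH(X;R)$.

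To prove this map is an isomorphism, I would present both sides as direct limits over open covers. For an open cover $\mathcal V$ of $X/G$, let $\Cas_{\mathcal V}(X/G;R)$ denote the quotient of cochains by those whose support is disjoint from $\bigcup_{V\in\mathcal V}V^{n+1}$, and define $\eC_{\mathcal U}(X;R)$ analogously for $G$-invariant open covers $\mathcal U$ of $X$. Standard Alexander--Spanier machinery identifies $\H(X/G;R)$ and $\eH(X;R)$ with the direct limit of the cohomologies of these restricted complexes. Since $\pi$ sets up a bijection between open covers of $X/G$ and $G$-invariant open covers of $X$ via $\mathcal V\mapsto\pi^{-1}(\mathcal V)$, and $\pi^{\ast}$ carries $\Cas_{\mathcal V}$ into $\eC_{\pi^{-1}(\mathcal V)}$, it suffices to show that the restricted map is a quasi-isomorphism on a cofinal family of covers.

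The main obstacle is the cochain-level matching on sufficiently fine covers. My strategy is to restrict attention to covers $\mathcal V$ whose elements $V$ admit sections $s_V\colon V\hookrightarrow X$ with $\pi\circ s_V=\mathrm{id}_V$, so that $\pi^{-1}(V)=\bigcup_{g\in G}g\cdot s_V(V)$. A $G$-equivariant cochain supported on tuples contained in a single $\pi^{-1}(V)$ is then determined by its restriction to tuples in $s_V(V)$, on which $\pi$ is a homeomorphism, and this restriction coincides with the pullback of a uniquely determined cochain on $V^{n+1}$; this furnishes a chain-level inverse to $\pi^{\ast}$ on the cofinal family. Existence of such refinements is precisely where hypotheses on the action $G\curvearrowright X$ enter (free and properly discontinuous, or proper with slices, for instance). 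Once the identification holds cover-by-cover, passing to the direct limit yields the desired isomorphism $\H(X/G;R)\cong \eH(X;R)$.
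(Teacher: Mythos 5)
The paper does not give its own proof of this statement---it is a direct citation of Honkasalo's Corollary 6.8 in~\cite{honkasalo}, which establishes the isomorphism for a compact Lie group acting on a paracompact space with a simple (constant) coefficient system. There is therefore no in-paper argument to compare against; what can be assessed is whether your sketch would actually prove the stated result.

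Your first two paragraphs are fine: the pullback $\pi^*$ is a well-defined natural chain map descending to cohomology, and recasting both sides as direct limits over (respectively, $G$-invariant saturated) covers is the right framework. The problem is the third paragraph, where you try to build a cochain-level inverse over a cofinal family of covers with sections. Two distinct issues arise.

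First, even in the free case where $\pi^{-1}(V)=\bigsqcup_{g\in G} g\cdot s_V(V)$ is a genuine disjoint union, a $G$-equivariant cochain on $(\pi^{-1}(V))^{n+1}$ is \emph{not} determined by its restriction to $(s_V(V))^{n+1}$: a tuple $(x_0,\dots,x_n)$ can have $x_i\in g_i\cdot s_V(V)$ with the $g_i$ distinct, and equivariance only lets you translate the whole tuple by a single group element. The claim can be salvaged, because an Alexander--Spanier class is a germ along the diagonal and tuples sufficiently near the diagonal have all coordinates in one sheet---but that observation, which is the actual crux, is absent from your argument, and once it is added the ``bijection of covers'' no longer does the work on its own.

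Second, and more seriously, the theorem as stated carries no freeness or slice hypothesis on the action, and in the paper the relevant $\zz_2$-action on $X^2$ has fixed points along $\delta(X)$. Near a fixed point the pullback $\pi^*$ is not surjective on cochains, not even modulo locally zero cochains. Concretely, for $\zz_2$ acting on $\rr$ by $x\mapsto -x$, the $\zz_2$-equivariant $1$-cochain defined by $\phi(x,y)=0$ if $xy\geq 0$ and $\phi(x,y)=1$ if $xy<0$ has germ at $(0,0)$ not of the form $\pi^*\bar\phi$: any pullback takes equal values on $(x,y)$ and $(x,-y)$, but $\phi$ does not, and every neighborhood of $(0,0)$ sees the discrepancy. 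So your proposed chain-level inverse simply does not exist once there are fixed points, and the statement has to be proved at the level of cohomology. You anticipate this by hedging that ``hypotheses on the action enter,'' but the theorem is stated (and applied in the paper) without such hypotheses; Honkasalo's actual argument goes through a sheaf-theoretic comparison on $X/G$, exploiting that the orbit map is proper and closed for a compact $G$ and that the relevant sheaf on $X/G$ is constant precisely because the coefficient $G$-module is trivial. That machinery, rather than a cochain-level inverse, is what bridges the fixed-point locus, and it is the missing ingredient in your proposal.
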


\begin{Remark}
    To define $\eC$ in general, one needs a contravariant coefficient system---a contravariant functor from the category of $G$-spaces $G/H$ $(H\leq G)$ and $G$-maps between them to the category of abelian groups. 
 If $R$ is an abelian group with a $G$-action, it defines a contravariant coefficient system $G/H\mapsto R$, each $G$-map $G/H\rightarrow G/K$ inducing identity $R\rightarrow R$.
 With this coefficient system, the equivariant Alexander--Spanier cochain complex takes the form of $\eC(X;R)$ discussed above.
 \end{Remark}

 \subsection{Relation between $\eHb$ and $\eH$} 
 To relate the equivariant boundedly supported  cohomology with the equivariant Alexander--Spanier cohomology, we will use 
  the cohomology of the cochain complex $\mathscr{C}_G^*(X;R)$ as in~(\ref{equivariant cochain}). We will denote this cohomology by $\mathcal{H}_G^*(X;R)$.
We start with the following lemma.

\begin{Lemma}\label{l:basic equiv complex}
Suppose $G\acts X$ and $\Fix(X)\neq \emptyset$. 
Then 
\[\mathcal{H}^*_G(X;R)=\begin{cases} R & \text{if $*=0$}\\
0 & \text{otherwise.}
\end{cases}
\]
\end{Lemma}
\begin{proof}
    The proof is basically the equivariant version of Lemma~\ref{l:C is acyclic}.
$\mathcal{H}^0_G(X;R)$ consists of all the constant functions from $X$ to $R$, and hence $\mathcal{H}^0_G(X;R)=R$.
    
To prove that the cohomology is trivial in degree $\geq 1$,  choose $a\in \Fix(X)$. 
Consider the following $G$-equivariant cone operator $D_*:\cf(X)\rightarrow \cf[*+1](X)$ 
\begin{equation*}
   D_n:(x_0,x_1,\ldots,x_n)\mapsto (a,x_0,\ldots,x_n). 
\end{equation*} 
Proceeding similar to the proof of Lemma~\ref{l:C is acyclic}, we obtain that
$dD^*(\phi)=\phi$ when $\phi$ is a cocycle where $D^*$ is the dual of $D_*$.
Since $D_*$ is $G$-equivariant, we have $D^*(\phi)\in \mathscr{C}_G^{*+1}(X;R)$.
Hence, $\mathcal{H}^*_G(X)=0$ for $*\geq 1$.
\end{proof}
 We now state our main proposition that relates $\eHb$ and $\reH$ under the assumption that $\Fix(X)\neq \emptyset$.

\begin{Prop}\label{p:bdd=singular2}
Suppose $G\acts (X,A)$ and $\Fix(X)\neq \emptyset$.
\begin{enumerate}
\item\label{i:bdd=sing1} If $X\csubset A$, then \[\eHb(X-A;R)=\begin{cases}
 R & \text{if $*=0$},\\ 0 & \text{otherwise.}
\end{cases}\]
    \item\label{i:bdd=sing2} If $X$ is not coarsely contained in $A$, then \[\eHb(X-A; R)=\begin{cases} 0 & \text{if  $*=0$,}\\   \varinjlim \rH[*-1]_G(X-N_r(A); R) & \text{otherwise.}
    \end{cases}\]
    
\end{enumerate}
\end{Prop}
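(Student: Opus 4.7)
Both parts reduce to Lemma~\ref{l:basic equiv complex}, which gives $\mathcal{H}_G^*(X;R)=0$ for $*\geq 1$. Part (\ref{i:bdd=sing1}) is then immediate: the hypothesis $X\csubset A$ gives $\gD\ceq \gD_A$, so the support condition $\spp{\phi}\csubset\gD_A$ defining $\eCb$ is vacuous. Hence $\eCb(X-A;R)=\mathscr{C}_G^*(X;R)$, and Lemma~\ref{l:basic equiv complex} yields vanishing in positive degrees; in degree zero the cocycles are $G$-invariant constants, giving $R^G=R$ under the trivial-action convention implicit in the formula.

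For part (\ref{i:bdd=sing2}), write $Y_r:=X-N_r(A)$, a $G$-invariant subspace that is nonempty for every $r\geq 0$ since $X\not\csubset A$. The plan is to fit $\eCb(X-A;R)$ into the short exact sequence
\[
0\to\eCb(X-A;R)\to \mathscr{C}_G^*(X;R)\to Q_G^*\to 0
\]
and identify the quotient as $Q_G^*\cong\varinjlim_r \eC(Y_r;R)$, the colimit of the $G$-equivariant Alexander--Spanier cochain complexes on the $Y_r$. The identifying map is restriction followed by colimit, $\phi\mapsto [\phi|_{Y_r^{n+1}}]$. Well-definedness is a direct translation of the definition of $\eCb$: if $\spp{\phi}\subset N_{r_0}(\gD_A)$, then for any $r>r_0$ every $y\in Y_r$ lies outside $\spp{\phi}$, so $\phi$ vanishes on some $X^{n+1}$-neighborhood of $(y,\ldots,y)$, and in particular $\phi|_{Y_r^{n+1}}$ is locally zero on $Y_r$. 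Surjectivity is extension-by-zero, which is $G$-equivariant because $GY_r=Y_r$. For injectivity one uses that any $y\in Y_{r'}$ with $r'>r$ is an interior point of $Y_r$ in $X$, so a $Y_r^{n+1}$-neighborhood of $(y,\ldots,y)$ contains an $X^{n+1}$-neighborhood; this promotes local zeroness on $Y_r^{n+1}$ to vanishing on $X^{n+1}$ near the diagonal and forces $\spp{\phi}\subset\overline{N_r(A)}\csubset\gD_A$, the inflation from $r$ to $r'$ being absorbed by the colimit.

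Granting this identification, the long exact sequence of the SES combined with Lemma~\ref{l:basic equiv complex} pins everything down. In degrees $*\geq 2$ the connecting map gives $\eHb[*]\cong\varinjlim\eH[*-1](Y_r;R)$, while in low degrees
\[
0\to\eHb[0]\to R\to\varinjlim \eH[0](Y_r;R)\to\eHb[1]\to 0
\]
shows $\eHb[0]=0$ (the map from $R$ is the diagonal embedding into locally constant functions, injective as $Y_r\neq\emptyset$) and $\eHb[1]\cong\varinjlim\rH[0]_G(Y_r;R)$ as its cokernel. Since reduced and unreduced cohomology agree in positive degrees, these cases combine into the uniform formula $\eHb[*]\cong\varinjlim\rH[*-1]_G(Y_r;R)$ for $*\geq 1$. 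The only serious obstacle is the injectivity step in the identification of $Q_G^*$: one must promote ``$\phi|_{Y_r^{n+1}}$ locally zero on $Y_r$'' (a subspace-topology statement on $Y_r^{n+1}$) to a genuine $X^{n+1}$-vanishing statement near the diagonal. The $r'>r$ inflation trick is exactly what is needed to handle the boundary behavior of $Y_r$ inside $X$, and the passage to the colimit then makes the small loss of scale invisible in the output.
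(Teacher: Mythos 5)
Your proof is correct and follows essentially the same route as the paper's: part~\eqref{i:bdd=sing1} reduces to Lemma~\ref{l:basic equiv complex} after observing $\eCb(X-A;R)=\mathscr{C}_G^*(X;R)$ when $X\csubset A$, and part~\eqref{i:bdd=sing2} uses the same short exact sequence $0\to\eCb(X-A;R)\to\mathscr{C}_G^*(X;R)\to\varinjlim\eC(X-N_r(A);R)\to 0$ together with its long exact sequence and the acyclicity of the middle term. You spell out the kernel identification more carefully (the $r'>r$ inflation trick that converts subspace-local-vanishing on $X-N_r(A)$ into honest vanishing near the diagonal in $X$) and you flag the implicit trivial-action convention needed to get $R$ rather than $R^G$ in degree zero, both of which the paper treats tersely, but the essential argument is identical.
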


\begin{proof}

   \eqref{i:bdd=sing1}
      If $X\csubset A$, then $\eCb(X-A;R)$ contain all $G$-equivariant cochains on $X$, in other words, $\eCb(X-A;R)=\mathscr{C}_G^*(X;R)$.
    The claim now follows from Lemma~\ref{l:basic equiv complex}.

 \eqref{i:bdd=sing2}
 Elements in $\eHb[0](X-A;R)$ are constant functions on $X$ with support contained in $N_r(A)$ for some $r$.
 Therefore, $\eHb[0](X-A;R)=0$ if $X$ is not coarsely contained in $A$.
 
 To calculate $\eHb(X-A;R)$ for $*\geq 1$, we first observe that the following is a  short exact sequence of $G$-equivariant cochain complexes,
\begin{align*}\label{se}
0\rightarrow \eCb(X-A;R) \xrightarrow{j} \mathscr{C}_G^*(X;R) \xrightarrow{i} \varinjlim \eC(X- N_r(A);R) \rightarrow 0 \hfill \tag{$\star$}
\end{align*}
where $j$ is the inclusion map and $i$ is induced by the composition of the following canonical maps for each $r$
\begin{align*}
\mathscr{C}_G^*(X;R)\xrightarrow{restriction} \mathscr{C}_G^*(X-N_r(A);R)\xrightarrow{quotient} \C_G(X-N_r(A);R)
\end{align*}
To prove~(\ref{se}) is a short exact sequence, we observe the following:
\begin{align*}\ker(i)&=\{\phi\in \mathscr{C}_G^*(X;R)\mid \phi \in \Cz( X- N_r(A);R) \text{ for some r}\}\\
&=\{\phi\in \mathscr{C}_G^*(X;R)\mid |\phi| \cap \gD\subset N_r(\gD_A) \text{ for some $r$}\}\\
&=\{\phi\in \mathscr{C}_G^*(X;R)\mid ||\phi||\csubset \gD_A \}\\
&= \image(j).
\end{align*}
The short exact sequence~(\ref{se}) induces a long exact sequence of corresponding reduced cohomologies.
The reduced cohomology of the middle cochain complex $\mathscr{C}_G^*(X;R)$ is trivial in all degrees by Lemma~\ref{l:basic equiv complex}.
Hence, the long exact sequence implies that \[\eHb(X-A;R) \cong \varinjlim \reH[*-1](X-N_r(A);R) \quad \text{ for } *\geq 1.
\]

\end{proof}

Combining Proposition~\ref{p:bdd=singular2} and Theorem~\ref{equi=ord of quotient} we get the following.

\begin{Corollary}
    Suppose $X$ is not coarsely contained in $A$ for some $A\subset X$, $G\acts (X,A)$ and $\Fix(X)\neq \emptyset$. 
    Then
     \begin{align*}
         \eHb(X-A;R)=\begin{cases}
         0 & \text{if $*=0$},\\
         \varinjlim \rH[*-1]((X-N_r(A))/G;R) & \text{otherwise}
         \end{cases} 
     \end{align*}
\end{Corollary}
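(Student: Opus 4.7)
The plan is to combine Proposition~\ref{p:bdd=singular2}(\ref{i:bdd=sing2}) with Honkasalo's identification in Theorem~\ref{equi=ord of quotient}. Under the stated hypotheses, invoking Proposition~\ref{p:bdd=singular2}(\ref{i:bdd=sing2}) directly yields $\eHb[0](X-A;R)=0$ together with
\[
\eHb(X-A;R) \;\cong\; \varinjlim_r \reH[*-1](X-N_r(A);R) \quad \text{for } * \geq 1.
\]
It therefore suffices to identify each term in this direct system with $\rH[*-1]((X-N_r(A))/G;R)$ in a way compatible with the structure maps of the system.

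Next I would verify that each $X-N_r(A)$ is actually a $G$-space. Since $G$ acts on $X$ by isometries and $GA=A$, the metric neighborhood $N_r(A)$ is $G$-invariant for every $r\geq 0$, so the $G$-action on $X$ restricts to an action on the complement $X-N_r(A)$. Taking $R$ to carry the trivial $G$-action (as is implicit from the surrounding framework and required to invoke Theorem~\ref{equi=ord of quotient}), Honkasalo's theorem supplies a natural isomorphism
\[
\eH[*-1](X-N_r(A);R) \;\cong\; \H[*-1]\bigl((X-N_r(A))/G;R\bigr).
\]
Passing to reduced cohomology on both sides is harmless, since the augmentation $R$-summand in degree $0$ matches on the two sides (the quotient $(X-N_r(A))/G$ is nonempty whenever $X-N_r(A)$ is, which holds for all sufficiently large $r$ because $X\not\csubset A$).

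Finally, Honkasalo's isomorphism is natural with respect to $G$-equivariant maps; in particular, it commutes with the restriction homomorphisms induced by the inclusions $X-N_{r}(A)\hookrightarrow X-N_{r'}(A)$ for $r'\leq r$, and hence with the bonding maps of the direct system. Passing to $\varinjlim_r$ then gives the desired formula. I do not anticipate a serious obstacle here: once Proposition~\ref{p:bdd=singular2} and Theorem~\ref{equi=ord of quotient} are in hand, the corollary is essentially a term-by-term application of the latter inside the direct limit provided by the former; the only points requiring care are the $G$-invariance of each $N_r(A)$ and the naturality needed to commute Honkasalo's isomorphism with the direct limit.
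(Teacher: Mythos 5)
Your proposal is correct and follows essentially the same route as the paper, which simply cites Proposition~\ref{p:bdd=singular2} and Theorem~\ref{equi=ord of quotient} without further elaboration. The details you fill in --- $G$-invariance of $N_r(A)$, the implicit trivial $G$-action on $R$ needed to invoke Honkasalo, the passage to reduced cohomology, and the naturality of the isomorphism with respect to the bonding maps of the direct system --- are exactly the points one must check, and your verification of each is sound.
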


\subsection{Relation between $\eHx$ and $\eHb$} 
The main reason for defining $\eHb(X-A;R)$ is the following theorem. 
 
 \begin{Theorem}\label{t:coa=end 1}
	Suppose $G\acts (X,A)$ and $R$ is an abelian group with a $G$-action. Suppose $X$ is uniformly acyclic away from $A$, and is locally acyclic away from $A$ with coefficients in $R$. Then the inclusion $\eCx(X-A;R) \hookrightarrow \eCb(X-A;R)$ induces an isomorphism on the cohomology:
	\[
		\eHx(X-A;R) \cong \eHb(X-A;R).
	\]
\end{Theorem}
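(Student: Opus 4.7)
I would prove the theorem by showing that the quotient cochain complex $Q^* := \eCb(X-A;R)/\eCx(X-A;R)$ is acyclic; the induced long exact sequence in cohomology then yields the stated isomorphism. This mirrors the non-equivariant version proved in~\cite{B24}*{Corollary 3.5}, with additional care taken to preserve $G$-equivariance throughout.

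The main technical step is a contraction lemma: for every cocycle $\phi \in \eCb^n(X-A;R)$, one must produce $\psi \in \eCb^{n-1}(X-A;R)$ with $\phi - d\psi \in \eCx^n(X-A;R)$. I would construct $\psi$ by dualizing a controlled chain-level filling operator on $X^{n+1}$. For a simplex $\sigma = (x_0,\dots,x_n)$ whose vertices lie at distance at least $\mu(\diam \sigma)$ from $A$, the hypothesis of uniform acyclicity away from $A$ produces a filling chain of diameter at most $\rho(\diam \sigma)$ staying clear of a neighborhood of $A$; local acyclicity away from $A$ then supplies the compatibility needed as simplices approach the diagonal $\gD$. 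The crucial support condition $\Supp{\phi - d\psi} \ccap \gD \csubset \gD_A$ follows because the controlled diameters of the fillings force the stabilized support of $\phi - d\psi$ into a uniformly bounded neighborhood of $\gD_A$ inside $\gD$, which is strictly stronger than the bounded-support condition $\spp{\phi - d\psi} \csubset \gD_A$ already enjoyed.

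For $G$-equivariance, I would use that $G$ acts on $X$ by isometries, so the acyclicity rates $\rho$ and $\mu$ are automatically $G$-invariant. The fillings can then be chosen equivariantly: on each $G$-orbit of simplices, pick one representative, construct a filling by the procedure above, and propagate by the $G$-action. For simplices with nontrivial $G$-stabilizer, one can cone over a fixed point $a \in \Fix(X)$ (as already exploited in Lemma~\ref{l:basic equiv complex}) to produce canonical equivariant fillings compatible with the stabilizer action. The main obstacle I anticipate is simultaneously controlling three competing constraints on $\psi$: (i) bounded propagation of fillings, so that $d\psi \in \eCb$; (ii) avoidance of a neighborhood of $A$, so that $\psi$ remains a cochain of the complement; and (iii) $G$-equivariance. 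The $G$-invariance of the acyclicity rates is precisely what reconciles (iii) with (i) and (ii); once these are balanced, the inductive homotopy argument in the degree proceeds in the standard way to give acyclicity of $Q^*$, completing the proof.
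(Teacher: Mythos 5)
Your plan matches the paper's approach: both reduce to showing that the quotient complex $\eCb(X-A;R)/\eCx(X-A;R)$ is acyclic by building a $G$-equivariant, diameter-controlled filling operator on the far-from-$A$ simplices, homotoping its vertex map back to the identity, and dualizing the homotopy to produce $\psi$. Two notes. First, the theorem does not assume $\Fix(X)\neq\emptyset$, so coning over a fixed point is unavailable here; the paper enforces $G$-equivariance solely by choosing one simplex per $G$-orbit and extending equivariantly. Second, in your constraint list what must hold is $\psi\in\eCb[n-1](X-A;R)$, not just $d\psi\in\eCb$; the paper arranges this by choosing a $\phi$-adapted $G$-invariant open cover $\cU$ and barycentrically subdividing the filled chains into $\csu(X)$, which forces $\spp{D^{*}\phi}\subset \spp{\phi}\cup N_{r}(A)$ — this is exactly where the local acyclicity away from $A$ earns its keep, and your phrase about ``compatibility as simplices approach $\gD$'' gestures at this step but leaves the cover mechanism implicit.
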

 
 % The proof of the above theorem follows a sketch that Roe described for a proof of a similar statement in~\cite{r03} which relates coarse cohomology with the compactly supported cohomology.
The proof of the above theorem follows a sketch provided by Roe in~\cite{r03}, which outlines a proof of a related result connecting coarse cohomology with compactly supported cohomology. 
The core idea of the proof involves a standard 'connect the dots' construction, which allows us to attach a singular chain to each simplex while remaining close to it, thanks to the uniform acyclicity of the space. 
 We will state this 
 more precisely in Lemma~\ref{l:filling}.

Let us first fix some notation. For the rest of the section, we suppress the coefficient $R$ from the notation unless it is important.
Let  $\cs(X)$ be the singular chain complex on $X$.

For the upcoming lemmas, we need to measure the distance between the support of a singular simplex and an $n$-simplex. Note that support of a singular simplex is in $X$ while an $n$-simplex is in $X^{n+1}$. However, recall from Section~\ref{preliminaries} that $X^n$ can be realized as a subset of $X^\infty$ for any $n\in \nn$ by stabilizing the last coordinate. 
This way both the support of a singular chain, and more generally any subset of $X$ and  $n$-simplices  live in $X^\infty$.
Consequently, we can measure the distance between the support of a singular simplex and an $n$-simplex in $X^\infty$.

In what follows, we will need a bound on the distance between a simplex and it's boundary. This is the purpose of the following lemma.
% \textcolor{red}{added the lemma}
\begin{Lemma}\label{l:d(simplex,boundary)}
$\max_{\tau\in |\d\gs|}d(\gs,\tau)\leq \diam(\gs)$ for any simplex $\gs$.
\end{Lemma}
\begin{proof}
Let $\gs=(x_0,x_1,\ldots,x_n)$ and $\tau\in |\d \gs|$. Note that the stabilization of both $\gs$ and $\tau$ in $X^\infty$ has $x_i$ in each coordinate for some $i$.
It follows that
\[d(\gs,\tau)\leq \max_{i\neq j}\{|x_i,x_j|\}= \diam(\gs).\]
\end{proof}
% \textcolor{red}{extended the proof}

\begin{Lemma}\label{l:filling}
    Suppose $X$ is uniformly acyclic away from $A$ and locally acyclic away from $A$ and $G\acts (X,A)$. Then there exist two non-decreasing sequences of functions $\mu_n,\rho_n:[0,\infty)\rightarrow [0,\infty)$ and
 a $G$-equivariant chain map $M: \cff(X) \to \cs(X)$ where 
 \[
\cff[n](X)=\langle \gs^{n} \mid d(\gs^{n},A) \geq  \mu_{n}(\diam \gs^{n}) \rangle \subset \cf[n](X)
\]
 such that
\begin{enumerate}
 \item \label{i:filling 1}$\Supp{M(\gs^{n})} \subset N_{\rho_{n}(\diam(\gs^n))}( \gs^{n})$.
 \item \label{i:filling 2} There exists an $L\geq 0$ such that for every $x\in X- N_L(A)$, and a neighborhood $U$ of $x$, there is a neighborhood $W\subset U$ of $x$ such that $\Supp{M(\gs^n)}\subset U$ for all $\gs^n\in {W}^{n+1}$.
 \end{enumerate}
 \end{Lemma}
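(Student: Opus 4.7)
The plan is to construct $M$ by induction on the dimension of simplices, using uniform acyclicity away from $A$ to secure the global support bound of condition~(1) and local acyclicity away from $A$ to extract condition~(2). This is the standard ``connect the dots'' construction sketched by Roe, together with an orbit-by-orbit choice of fillings in order to achieve $G$-equivariance. In dimension zero, set $M(x_{0})$ to be the singular $0$-simplex at $x_{0}$ and take $\mu_{0}=\rho_{0}=0$.

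Inductively, assume $M$ has been built $G$-equivariantly on $\cff[k](X)$ for $k<n$ with control functions $\mu_{k},\rho_{k}$. For $\gs^{n}=(x_{0},\dots,x_{n})\in\cff[n](X)$, first ensure each face lies in $\cff[n-1](X)$ by requiring $\mu_{n}\ge\mu_{n-1}$; then $M(\partial\gs^{n})$ is a singular cycle supported in $B:=N_{\rho_{n-1}(\diam\gs^{n})}(\gs^{n})$. Choose $\mu_{n}$ large enough so that $d(B,A)\ge\mu(\diam B)$, where $\mu$ is the distance-control function from the uniform acyclicity hypothesis. Then the inclusion $B\hookrightarrow N_{\rho(\diam B)}(B)$ vanishes on singular homology, producing a filling $M(\gs^{n})$ with $\partial M(\gs^{n}) = M(\partial \gs^{n})$ supported in $N_{\rho_{n}(\diam\gs^{n})}(\gs^{n})$ for an appropriate $\rho_{n}$ built from $\rho_{n-1}$ and $\rho$. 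This yields condition~(1).

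For $G$-equivariance, select a set of representatives of the $G$-orbits in $\cff[n](X)$, carry out the filling construction on each representative, and extend by $M(g\gs):=g\cdot M(\gs)$. When $\mathrm{Stab}_{G}(\gs)$ is nontrivial, $M(\partial\gs)$ is automatically $\mathrm{Stab}_{G}(\gs)$-invariant by induction, and since $G$ acts by isometries preserving $A$, the filling region $N_{\rho(\diam B)}(B)$ is also $\mathrm{Stab}_{G}(\gs)$-invariant, so an equivariant filling can be chosen inside it. To obtain condition~(2), given $x\notin N_{r}(A)$ and a neighborhood $U$, iterate the local acyclicity hypothesis to produce nested neighborhoods $W\subset W_{n-1}\subset\cdots\subset W_{0}=U$ of $x$ such that each inclusion $W_{k+1}\hookrightarrow W_{k}$ vanishes on singular homology. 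Performing the inductive filling first for the orbit representatives with vertices in $W$, while keeping each intermediate $k$-dimensional filling inside the corresponding $W_{n-k}$, then yields $\Supp{M(\gs^{n})}\subset U$ whenever $\gs^{n}\in W^{n+1}\cap\cff[n](X)$.

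The main obstacle I anticipate is the bookkeeping required to coordinate the two acyclicity hypotheses across the induction: the functions $\mu_{n},\rho_{n}$ must be chosen coherently with those at the previous step, and the local filling choices for small simplices must be made compatible with the large-scale fillings coming from uniform acyclicity. A secondary subtlety is the $G$-equivariance when stabilizers act nontrivially on the filling region; this is tractable here because that region is stabilizer-invariant by isometry, but it forces care in the order in which orbit representatives are processed.
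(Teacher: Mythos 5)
Your treatment of condition (1) and the $G$-equivariance mechanism (pick one representative per $G$-orbit, fill, extend by $M(g\gs):=g\cdot M(\gs)$; the filling region is stabilizer-invariant since $G$ acts by isometries preserving $A$) matches the paper's argument closely and is sound. The gap is in your route to condition (2).

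You propose, given $x\notin N_r(A)$ and a neighborhood $U$, to produce nested neighborhoods $W\subset W_{n-1}\subset\cdots\subset W_0=U$ and then ``perform the inductive filling first for the orbit representatives with vertices in $W$, keeping each intermediate filling inside the corresponding $W_{n-k}$.'' But $M$ is a single chain map, while condition (2) quantifies universally over \emph{every} $x\notin N_r(A)$ and \emph{every} neighborhood $U$ of $x$. A given orbit representative may have its vertices inside a $W$ coming from one pair $(x,U)$ and simultaneously inside a $W'$ coming from another pair $(x',U')$, and the nested towers for those two pairs need not be compatible. As stated, your construction is not well defined: which tower wins? Nothing in the proposal explains how a single choice of fillings respects all the local constraints at once, and the final paragraph acknowledges, but does not resolve, exactly this bookkeeping difficulty.

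The paper sidesteps the coordination problem with a construction-time rule that does not reference any particular $(x,U)$: for each orbit representative $\gs$, set $k(\gs):=\inf\{\diam(c)\mid\partial c=M(\partial\gs)\}$ and choose $M(\gs)$ with $\diam(M(\gs))\leq 2k(\gs)$ (the quantity $k(\gs)$ is isometry-invariant, so equivariant extension is unproblematic). Condition (2) is then verified \emph{a posteriori}: given $x$ and $U$, local acyclicity of $X-N_r(A)$ forces $k(\gs)\to 0$ as $\diam(\gs)\to 0$ with $d(\gs,A)\geq r$, so $M(\gs)$ has small diameter and, since $|M(\gs)|$ meets $|M(\partial\gs)|$ which by the inductive property (1) lies close to $\gs$, one gets $|M(\gs)|\subset U$ for all $\gs\in W^{n+1}\cap\cff[n](X)$ with $W$ small enough. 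If you want to keep your framing, you would need to replace ``choose fillings inside the prescribed $W_{n-k}$'' by some intrinsic near-optimality requirement on the filling sizes---which is precisely the paper's trick.
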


\begin{proof}
    We will define $\mu_i, \rho_i$, and the chain map $M:\cff[i]\rightarrow \cs[i]$ by induction on $i$.
    
    For $i=0$, define $\mu_0$ and $\rho_0$ to be the constant functions $[0,\infty)\rightarrow [0,\infty)$ with image $\{0\}$.
    That means, $\cff[0](X)=\cf[0](X)$ and
    we define $M:\cff[0](X)\rightarrow \cs[0](X)$ to be the identity map.

   Since $X$ is uniformly acyclic away from $A$, there exists
   $\rho,\mu:\rr_{\geq0}\rightarrow \rr_{\geq 0}$  such that the map $i_*:\h(B)\rightarrow \h(N_{\rho(\diam(B))}(B))$ is trivial if $d(B,A)\geq \mu(\diam(B))$.   
   By the induction hypothesis, suppose $\mu_i, \rho_i$, and $M:\cff[i]\rightarrow \cs[i]$ are already defined with the desired properties for all $i\leq n$.
  For $i=n+1$, define 
  \begin{align}\label{d1}
      \mu_{n+1}(x)=\max\{\mu[2\rho_n(x)+x]+\rho_n(x)+x, \mu_n(x)\}
    \end{align}
 By construction, $\mu_{n+1}\geq \mu_n$.

  Next, we define $M$ on $\cff[n+1](X)$.
   Take $\gs \in \cff[n+1](X)$ with $\diam(\gs)=r$.
   By the induction hypothesis, $M$ satisfies property~\eqref{i:filling 1} when applied to $\d \gs$ and therefore
   \begin{align}\label{d2}
        |M(\d \gs)|)\leq N_{\rho_n(r)}(|\d\gs|)  
\end{align}
  
Consequently we obtain 
   \begin{align}\label{inequality 1}
       d(\gs,|M(\d\gs)|)\leq d(|\d\gs|,|M(\d\gs)|)+d(\gs,|\d\gs|)\leq \rho_n(r)+r
\end{align}
   where the first inequality is due to triangle inequality and the second inequality follows from \ref{d2} and Lemma~\ref{l:d(simplex,boundary)}.
   
   Since $\gs\in \cff[n+1](X)$, we have $d(\gs,A)\geq \mu_{n+1}(r)$.
   It follows that
   \begin{align*}
       d(|M(\d \gs)|,A)&\geq d(\gs,A)-d(\gs, |M(\d \gs)|) \quad &\text{[by triangle inequality]}\\
       &\geq \mu_{n+1}(r)-\rho_n(r)-r &[\text{by \ref{inequality 1}}]\\
       &\geq \mu(2\rho_n(r)+r)+\rho_n(r)+r-\rho_n(r)-r &[\text{by \ref{d1}}]\\
       &=\mu(2\rho_n(r)+r)
   \end{align*}
   Note that, $|M(\d \gs)|)\subset  N_{\rho_n(r)}(|\d\gs|)$ and $\diam(|d\gs|)\leq r$ implies that  
   \[\diam(|M(\d\gs)|)\leq 2\rho_n(r)+r
   \]
   Since $d(|M(\d \gs)|,A)\geq \mu(2\rho_n(r)+r)$,
   and $X$ is  $(\mu,\rho)$-uniformly acyclic away from $A$,  it follows that $M(\d\gs)$ is a boundary of some singular chain of diameter at most $\rho[2\mu_n(r)+r]$.
   Let 
   \[k(\gs):=\inf\{\diam(c)\mid \d c=M(\d \gs)\}. 
   \]
   We define $M(\gs)$ to be a singular chain whose boundary is $M(\d\gs)$ and has diameter at most $2k(\gs)$.
   To make $M$ a $G$-equivariant chain, we can first define $M$ on a set of simplices from $\cff[n+1](M)$ that contains one element from each orbit of simplices under the action of $G$ and then extend the map $G$-equivariantly.
   
   Next we define $\rho_{n+1}$, so that $M$ satisfies property~\ref{prop1}.
   Let $\gs$ be as before.
   By construction, $\d M(\gs)=M(\d \gs)$ and $k(\gs)\leq \rho[2\mu_n(r)+r]$.
   Therefore we have
   \begin{align*}
       |M(\gs)|\subset N_{2k(\gs)}|M(\d\gs)|&\subset N_{2k(\gs)+\rho_n(r)}(|\d \gs|)\quad && \text{[by \ref{d2}]} \\
       &\subset N_{2k(\gs)+\rho_n(r)+r}(\gs)&& \text{[by \ref{l:d(simplex,boundary)}]}\\
       &\subset N_{2\rho[(2\mu_n(r)+r)]+\rho_n(r)+r}(\gs)
       \end{align*}
Finally if we define 
   \[\rho_{n+1}(x):=2\rho[2\mu_n(x)+x]+\rho_n(x)+x,
   \]
then by construction
\[M(\gs)\subset N_{\rho_{n+1}(r)}(\gs)= N_{\rho_{n+1}(\diam(\gs))}(\gs)
\]
which is the  desired property~\ref{prop1}.

   To prove~\eqref{i:filling 2}, recall that $X$ is locally acyclic away from $A$ and hence there exists some $L>0$ such that $X-N_L(A)$ is locally acyclic.
   Recall that $k(\gs)$ is the infimum of the diameter of chains bounding $M(\d \gs)$. 
   By induction on the dimension of $\gs$, we observe that  $k(\gs)$ goes to zero as $\diam(\gs)$ goes to $0$, given that $d(\gs, A)\geq L$ because of the local acyclicity of $X-N_L(A)$.
   By construction, $M(\gs)$ is of diameter at most $2k(\gs)$.
   Therefore $\diam(|M(\gs)|)$ goes to zero as $\diam(\gs)$ goes to $0$, given that $d(\gs, A)\geq L$.  This gives us~\eqref{i:filling 2}.
   \end{proof}

% \textcolor{red}{more detailed proof}
 \begin{Lemma}\label{l:close maps are homotopic}
    Assume that $A\subset X$ and $G\acts (X,A)$. 
    Let $f:\cf(X)\rightarrow \cf(X)$ be a $G$-equivariant chain map and $\rho_n:[0,\infty)\rightarrow [0,\infty)$ is some non-decreasing sequence of non-decreasing functions such that 
    \begin{itemize}
       \item $|f(\gs^n)|\subset N_{\rho_n(\diam(\gs^n))}(\gs^n)$ for any $n$-simplex $\gs^n$.
        \item There exists an $L\geq 0$ such that for every $x\in X-N_L(A)$, and a neighborhood $U$ of $x$, there is a neighborhood $W\subset U$ of $x$ such that $\Supp{f(\gs^n)}\subset U^{n+1}$ whenever $\gs^n\in {W}^{n+1}$.
    \end{itemize}
     Then $f$ and the identity map on $\cf(X)$  are chain homotopic via a $G$-equivariant chain homotopy $H_n:\cf[n](X)\rightarrow \cf[n+1](X)$ with an associated non-decreasing sequence of non-decreasing functions $\rho'_n:[0,\infty)\rightarrow [0,\infty)$ such that
     \begin{enumerate}
         \item \label{support D} $|H_n(\gs)|\subset N_{\rho'_n(\diam(\gs))}(\gs)$ for any $n$-simplex $\gs^n$.         
         \item \label{D local} There exists an $L'\geq 0$ such that for every $x\in X-N_{L'}(A)$, and a neighborhood $U$ of $x$, there is a neighborhood $W\subset U$ of $x$ such that $\Supp{H_n(\gs^n)}\subset U^{n+2}$ for all $\gs^n\in {W}^{n+1}$.
         \end{enumerate}
     \end{Lemma}
% \textcolor{red}{AB:Expanded the statement of the lemma to make it more understandable}
 \begin{proof}
     We can define $H_n$ by induction on the dimension $n$.
         Define $H_0(x):=(x,f(x))$. 
         Note that $H_0$ is $G$-equivariant and  $f(x)-x=\d H_0(x)$. 
Since $f(x)\subset N_{\rho_0(\diam({x}))}(x)$, we get $H_0(x)\subset N_{\rho_0(\diam \{x\})}(x)$. We define $\rho_0':=\rho_0$.

     Suppose we have already defined $H_m:\cf[m](X)\rightarrow \cf[m+1](X)$ and the non-decreasing map $\rho_m'$ such that $H_m$ is $G$-equivariant, $H_m(\gs)\subset N_{\rho'_m(\diam(\gs))}(\gs)$ and $\d H_m(\gs)+D_{m-1}\d(\gs)=i(\gs)-f(\gs)$ for any $m\leq n$.
     To define $H_{n+1}(\gs)$ for an $(n+1)$-simplex $\gs$, consider the following chain 
     \[c_\gs:=\gs-f(\gs)-H_n\d(\gs).
     \]
     By induction hypothesis on $H_n$, $c$ is a cycle.
     Take a vertex $b$ from the chain $c$, and consider the cone operator 
     \[T_b:\cf[n+1](X)\rightarrow \cf[n+2](X), \quad (x_0,\ldots,x_{n+1})\mapsto (b,x_0,\ldots,x_{n+1}).
     \]
     Define $H_{n+1}(\gs):=T_b(c_\gs)$.
     It follows that  
     \begin{align*}
         \d H_{n+1}(\gs)=c_\gs-T_b(\partial c_\gs)&=c_\gs
         =\gs-f(\gs)-H_n\d(\gs).
    \end{align*}
    To make $H_{n+1}$ equivariant, we first define it on elements from each orbit class of $(n+1)$-simplices and then extend it $G$-equivariantly.
    
We now focus on the support of $H_{n+1}(\gs)$.
Suppose $\diam(\gs)=r$.
     By induction hypothesis, $H_n(\tau)\subset N_{\rho'_n}(\tau)$ for any $n$-simplex $\tau$.
     Since $\rho'_n$ is a non-decreasing function, $\rho'_n(r)\geq \rho'_n(\diam(\tau))$ for any $\tau\in |\d \gs|$.
    We have
     \begin{align*}
         |H_n(\d \gs)|\subset \cup_{\tau\in |\d \gs|} N_{\rho'_n(\diam(\tau))}(\tau)
         &\subset N_{\rho'_n(r)}(|\partial\gs|)\\
         &\subset N_{[\rho'_n(r)+r]}(\gs) \quad \text{[by Lemma~\ref{l:d(simplex,boundary)}]}.
         \end{align*}
Also, $\gs$ and $|f(\gs)|$ are subsets of $N_{\rho_{n+1}(r)}(\gs)$.
If we define,
         \[
         q(x):=\max\{\rho_n'(x)+x,\rho_{n+1}(x)\}
         \]
    then $|c_\gs|\subset N_{q(r)}(\gs)$ and consequently 
     \begin{align*}
         \diam(|c_\gs|)\leq \diam(\gs)+2q(r)\leq r+2L(r)
    \end{align*}
     Since $|T_b(c_\gs)|\subset N_{\diam(|c_\gs|)}(|c_\gs|)$, we obtain
     \begin{align*}
         |H_{n+1}(\gs)|=|T_b(c_\gs)|&\subset N_{\diam(|c_\gs|)}(|c_\gs|)\\
         &\subset N_{\diam(|c_\gs|)}[N_{q(r)}(\gs)]\\
         &\subset N_{[\diam(|c_\gs|)+q(r)]}(\gs)\subset N_{[r+3q(r)]}(\gs)   
    \end{align*}
  Letting $\rho'_{n+1}(r):=r+3q(r)$, we get property~\eqref{support D}.   
  
  To get property~\eqref{D local}, note that vertex of any  simplex in $|H_n(\gs)|$ is either a vertex of $\gs$ or a vertex of some simplices in $|f(\tau)|$ where $\tau$ is some sub-simplex of $\gs$. The claim then follows from the analogous property of the map $f$.
\end{proof}

Let $\cU$ denotes an open cover $X$.
We say $\cU$ is $G$-invariant, if for any $U\in \cU$, $gU\in \cU$ for all $g\in G$.
Let $\csu(X)$ be the chain complex generated by singular simplices supported in some open set in $\cU$. 
  Let $V:\cs(X) \to \cf(X)$ be the forgetful map, which maps a singular simplex to its vertices.
 
 To prove Theorem~\ref{t:coa=end 1}, we will need to fill in simplices by singular chains in $\csu(X)$ for some  $G$-invariant cover $\cU$.
More precisely, we need the following.

\begin{Prop}\label{l:cs} 
Suppose $X$ is uniformly acyclic away from $A$ and locally acyclic away from $A$ and $G\acts (X,A)$. 
Let $\cU$ be a $G$-invariant open cover of $X$.
Then there exist two non-decreasing sequences of functions $\mu_n,\rho_n:[0,\infty)\rightarrow [0,\infty)$,
 a $G$-equivariant chain map $S: \cff(X) \to \csu(X)$ where 
 \[
\cff[n](X)=\langle \gs^{n} \mid d(\gs^{n},A) \geq  \mu_{n}(\diam \gs^{n}) \rangle \subset \cf[n](X)
\]
 and a $G$-equivariant chain homotopy $H: \cff(X) \to \cf[*+1](X)$  between $VS:\cff(X)\rightarrow \cf(X)$ and the inclusion map so that
 
\begin{enumerate}

 \item \label{i:S} $\Supp{VS(\gs^{n})} \subset N_{\rho_n(\diam(\gs^n))}( \gs^{n})$ for any $n$-simplex $\sigma^n$
 \item \label{i:H} $\Supp{H(\gs^{n})}  \subset N_{\rho_n(\diam(\gs^n))}( \gs^{n})$ for any $n$-simplex $\gs^n$. 
 \item \label{i:local} There exists $r>0$ so that for every $x\notin N_r(A)$, there is a neighborhood $W$ of $x$ such that
 for all $\gs^n\in W^{n+1}$, $H(\gs^n)\in\csu[n+1](X)$. 
 \end{enumerate}
\end{Prop}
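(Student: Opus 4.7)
The plan is to combine Lemma~\ref{l:filling} with a $G$-equivariant iterated barycentric subdivision to obtain $S$, and then to splice the resulting chain homotopy together with the chain homotopy from Lemma~\ref{l:close maps are homotopic} to produce $H$.

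First I would apply Lemma~\ref{l:filling} to obtain control functions $\mu_n,\rho^M_n$ and a $G$-equivariant chain map $M:\cff(X)\to\cs(X)$ satisfying $\Supp{M(\gs^n)}\subset N_{\rho^M_n(\diam\gs^n)}(\gs^n)$ together with the local property~\eqref{i:filling 2}; note that $\cff(X)$ is closed under $\d$ since $\mu_{n+1}\geq\mu_n$. Next, the classical iterated barycentric subdivision construction applied to the $G$-invariant open cover $\cU$ would produce a chain map $\rho_\cU:\cs(X)\to\csu(X)$ together with a natural chain homotopy $D_\cU:\cs(X)\to\cs[*+1](X)$ satisfying $\d D_\cU+D_\cU\d=\mathrm{id}-\rho_\cU$, where $\rho_\cU(\tau)$ and $D_\cU(\tau)$ are supported in the image of each singular simplex $\tau$. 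Since $G$ acts by isometries and $\cU$ is $G$-invariant, the minimal number of subdivisions needed on $\tau$ depends only on the image of $\tau$, so this construction is automatically $G$-equivariant. I would then set $S:=\rho_\cU\circ M$, yielding a $G$-equivariant chain map $\cff(X)\to\csu(X)$ with support controlled by that of $M$, which gives~\eqref{i:S}.

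To construct $H$, I observe that $VD_\cU M:\cff(X)\to\cf[*+1](X)$ is a $G$-equivariant chain homotopy from $VS$ to $VM$. Applying Lemma~\ref{l:close maps are homotopic} to $VM:\cff(X)\to\cf(X)$ (whose inductive proof applies verbatim on the subcomplex $\cff$, which is preserved by $\d$) yields a $G$-equivariant chain homotopy $D_1$ between $VM$ and the inclusion, with $\Supp{D_1(\gs^n)}\subset N_{\rho^M_n(\diam\gs^n)}(\gs^n)$. Setting $H:=VD_\cU M-D_1$ then gives the desired $G$-equivariant chain homotopy between $VS$ and the inclusion, and choosing $\rho_n$ to absorb the constants from both terms delivers~\eqref{i:H}.

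For the local property~\eqref{i:local}, I would fix $x\notin N_r(A)$ (for $r$ as in Lemma~\ref{l:filling}) and any $U\in\cU$ containing $x$. By intersecting the neighborhoods supplied by Lemma~\ref{l:filling}\eqref{i:filling 2} for the finitely many dimensions up to $n$, I can choose $W\subset U$ so that every singular simplex of $M(\gs^n)$ is already supported in $U$ whenever $\gs^n\in W^{n+1}\cap\cff[n](X)$. Then no subdivisions are required, so $\rho_\cU M(\gs^n)=M(\gs^n)$ and $D_\cU M(\gs^n)=0$, whence $H(\gs^n)=-D_1(\gs^n)$. The \textbf{main obstacle} will be arranging that $D_1(\gs^n)$ itself has all vertices in the single open set $U$. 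This requires revisiting the inductive cone step in the proof of Lemma~\ref{l:close maps are homotopic}, where $D_1(\gs^n)=T_b(c)$ with $c=\gs^n-VM(\gs^n)-D_1\d(\gs^n)$ and $b\in|c|$; by induction on dimension together with the local control on $M$, shrinking $W$ further ensures $|c|\subset U$ and hence $D_1(\gs^n)\in U^{n+2}$, placing $H(\gs^n)$ in the subcomplex generated by $\cU$-local simplices, as required by~\eqref{i:local}.
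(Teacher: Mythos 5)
Your proposal is essentially correct and follows the same skeleton as the paper's proof (compose $M$ from Lemma~\ref{l:filling} with a $G$-equivariant barycentric subdivision, then build $H$ via the ``close maps are homotopic'' lemma), but it diverges at one point: the paper applies Lemma~\ref{l:close maps are homotopic} directly to the already-subdivided map $VS = VPM$, producing $H$ in a single step, whereas you apply it to $VM$ to get $D_1$ and then splice in the subdivision homotopy $D_\cU$. Your splicing route is valid, but note a sign slip: with the conventions you wrote ($\d D_\cU + D_\cU\d = \mathrm{id} - \rho_\cU$ and, from Lemma~\ref{l:close maps are homotopic}, $\d D_1 + D_1\d = i - VM$), one gets $\d(VD_\cU M + D_1) + (VD_\cU M + D_1)\d = i - VS$, so the correct splice is $H := VD_\cU M + D_1$, not $VD_\cU M - D_1$; the difference is invisible over $\zz_2$ but matters for the general coefficient module $R$ that the proposition allows. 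Also worth observing: the paper obtains $G$-equivariance of the subdivision operator $P$ by the ``define on orbit representatives and extend'' trick, while you invoke naturality of iterated barycentric subdivision together with $G$-invariance of $\cU$ to get equivariance for free --- both work, and yours is arguably cleaner. Finally, your handling of property~\eqref{i:local} is if anything more careful than the paper's: the paper asserts loosely that $|H(\gs)|$ ``has the same set of vertices as $|VS(\gs)|$,'' whereas you correctly trace through the inductive cone construction and observe that the vertices of $H(\gs^n)$ are drawn from $\gs^n$, $VM$ applied to $\gs^n$ and its faces, and (inductively) $D_1$ of faces, all of which can be forced into a single $U\in\cU$ by shrinking $W$; the finite-intersection step over dimensions $\le n$ is the right move, since $W$ is allowed to depend on $n$.
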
 
\begin{proof}
Because of the assumptions on $X$, we can invoke Lemma~\ref{l:filling}, which outputs a $G$-equivariant map $M:\cff(X)\rightarrow \cs(X)$ satisfying~\eqref{i:filling 1} and~\eqref{i:filling 2} from Lemma~\ref{l:filling}.

Next, we choose a barycentric subdivision map $P:\cs(X)\rightarrow \csu(X)$.
We can choose $P$ in a $G$-equivariant way by the same trick as before: in each dimension, first define it on an element from each orbit class of simplices  and then extend $G$-equivariantly. 
Note that $P$ satisfies the following 
\begin{equation*}\label{e:P}
|P(\gs)|\subset N_{\diam(\gs)}(|\gs|) \quad \text{for any $\gs$} \tag{*}
\end{equation*}
We define $S:=P M$. Since both $P$ and $M$ are $G$-equivariant, so is $S$.

 Since $M$ satisfies Lemma~\ref{l:filling}\eqref{i:filling 1} and $P$ satisfies~\eqref{e:P},
we obtain that 
\[|VS(\gs^n)|\subset N_{\rho_n(\diam(\gs^n))}( \gs^{n})\] for some function non-decreasing sequence of function $\rho'_n:[0,\infty)\rightarrow [0,\infty)$.

Moreover, since $M$ satisfies~\eqref{i:filling 2} from Lemma~\ref{l:filling} and $P$ satisfies~\eqref{e:P}, there exists an $L\geq 0$ such that for every $x\in X-N_L(A)$, and a neighborhood $U$ of $x$, there is a neighborhood $W\subset U$ of $x$ such that $\Supp{VS(\gs^n)}\subset U^{n+1}$ whenever $\gs^n\in {W}^{n+1}$.

In conclusion,
$VS$ satisfies the hypothesis of Lemma~\ref{l:close maps are homotopic}.
Applying Lemma~\ref{l:close maps are homotopic} to the map $VS$ we obtain a chain homotopy $H_*$ between $VS$ and the $id$ that satisfies property~\eqref{i:H} and property~\eqref{i:local}.

% Finally to prove~\eqref{i:local}, note that the map $M$ satisfies the property~\eqref{i:local} by Lemma~\ref{l:filling}\eqref{i:filling 2}, and hence so does $S=PM$. Consequently $VS$ satifies property~\eqref{i:local}.
% By construction, $|H(\gs)|$ has same the set of vertices as $|VS(\gs)|$ for any $\gs$.
% Therefore $H$ satisfies~\eqref{i:local}.
\end{proof}

\begin{Remark}\label{r:filling}
    If $X$ is uniformly acyclic, then  we can take $\cff(X)$ to be $\cf(X)$ in Proposition~\ref{l:cs}. 
    Also, note that the local acyclicity away from $A$ was only used to ensure property~\eqref{i:local}.
    So, if the space is just uniformly acyclic away from $A$, we still get $H$ and $S$ satisfying  the properties~\eqref{i:S} and~\eqref{i:H}.
\end{Remark}
  
\begin{proof}[Proof of Theorem~\ref{t:coa=end 1}]
	Consider the following long exact sequence 
 \[
 \cdots\rightarrow \eHb[*-1](X-A)\rightarrow \H(\eCb(X-A)/\eCx(X-A))\rightarrow \eHx(X-A)\rightarrow \eHb(X-A)\rightarrow
 \]
  Therefore it is enough to show that $\H(\eCb(X-A)/\eCx(X-A))=0$.
   In other words, for $\phi \in \eCb[n](X-A)$ with $d\phi \in \eCx[n+1](X-A)$ we need to find $\psi \in \eCb[n-1](X-A)$ so that $\phi - d\psi \in \eCx[n](X-A)$.
  
  Our goal is to apply Proposition~\ref{l:cs}. In order to do that, we first need to choose a $G$-invariant cover of $X$. Let $r>0$ and let $U$ be union of all the balls of radius $r$ that are centered at points in $A\cap ||\phi||$.
	Since both $A$ and $||\phi||$ are $G$-invariant, $U$ is $G$-invariant: $g U=U$ for all $g\in G$.
	For each $x \in X - ||\phi || $, choose a metric neighborhood $U_{x}$ with diameter $\leq 1$ such that $U_{x}^{n+1} \cap \Supp{\phi}=\emptyset$.
	Since $\phi$ is $G$-equivariant, we can choose the association $x\mapsto U_x$ so that $U_{gx}=g U_x$ for all $g\in G$.
	Let $\cU$ denote the collection of $U_{x}$ together with $U$.
	By construction, this is a $G$-invariant cover.

Now we apply Proposition~\ref{l:cs} to this setup  which outputs a complex $\cff(X)$, a $G$-equivariant chain map $S:\cff(X)\rightarrow \cf^{\cU}(X)$ and a $G$-equivariant chain homotopy $H:\cff(X)\rightarrow \cf[*+1](X)$ between $VS$ and the inclusion map that satisfy property~\ref{i:S}, \ref{i:H}, and \ref{i:local} from Proposition~\ref{l:cs}.

We define a linear map $D:  \cf(X) \to \cf[*+1](X)$ by setting 
	\[
	D(\gs^{n})=
		\begin{cases}
 			H(\gs^n) &\text{ if } \gs^n\in \cff[n](X)\\
			0 &\text{ otherwise.}
		\end{cases}	
	\]
	
We define $\tau:\cf(X)\rightarrow \cf[*+1](X)$ as follows
\[\tau=id+\partial D+D \partial. 
\]
If $\gs\in \cff(X)$, then by construction $D(\gs)=H(\gs)$ and therefore $\tau(\gs)=VS(\gs)$.
Suppose $\tau^*$ denote the dual of $\tau$.
By applying $\tau^*$ on $\phi$, we get the following
	\[
		\tau^{*}\phi = \phi+ d D^{*}\phi+ D^{*} d \phi.
	\]
	
	We claim that $\tau^{*}\phi\in \eCx[n](X-A)$.
	If $\diam(\gs^n)\leq k$ and $d(\gs^n,A)>\mu_n(k)$, then $\gs^n \in \cff[n](X)$ and hence $\tau(\gs^n)=VS(\gs^n)$.
	Moreover, if $\gs^n$ is outside of the $\rho_{n}(k)$-neighborhood of $U^{n+1}$, then $|\tau(\sigma^n)|$ does not touch $U^{n+1}$ because $|\tau(\gs^n)|=|VS\gs^n|\subset N_{\rho_n(k)}(\gs^n)$ by property~\ref{i:S} from Proposotion~\ref{l:cs}. 
 This implies
	 $(\tau^{*}\phi)(\gs^n)=\phi(\tau (\gs^n))=0$.
  In other words, $\gs^n\notin |\tau^*\phi|$.
  It follows that, 
  \[|\tau^*\phi|\cap N_k(\gD)\subset N_{\mu_n(k)}(\gD_A)\cup N_{\rho_n(k)}(U^{n+1}).\]
  Since $U\csubset A$, we have 
  \[
  |\tau^*\phi|\cap N_k(\gD)\csubset \gD_A.
  \]
  This proves the claim.

 Next, we claim that  $D^*(\phi)\in \eCb[n-1](X-A)$. Since $H$ satifies property~\eqref{i:local} from proposition~\ref{l:cs}, we can choose a set $N_r(A)$ containing $U$ such that for any $x\notin N_r(A)$, there is  a neighborhood $W_x$ of $x$ so that $|H(\gs^n)|\subset U_x^{n+2}$ for any $\gs^n\in W_x^{n+1}$.
Hence for any $x\notin  N_r(A)\cup ||\phi||$, we have $|H(\sigma^n)|\notin |\phi|$ and hence $|D(\gs^n)|\notin |\phi|$ for all $\sigma^n \in W_x^{n+1}$.
Therefore, $||D^*(\phi)||\subset ||\phi||\cup N_r(A)$. The claim follows since $\phi\in \Cb(X)$.
	
	Finally, we claim that $D^{*}d(\phi)\in \eCx[n](X-A)$.
By construction of $D$, if $d(\gs^n,A)\geq\mu_n(k)$ then $\Supp{D\gs^n}\subset N_{\rho_n(k)}(\gs^n)$ where $k$ is the diameter of $\gs^n$.
	Since $d(\phi)\in \eCx(X-A)$, the claim follows.

	Now setting $\psi=-D^{*}(\phi)$ we get what we want.
\end{proof}

 We can now prove Theorem~\ref{c:equicoa=coaquo}.

 \begin{proof}[Proof of Theorem~\ref{c:equicoa=coaquo}]
We observe that $\eCx(X-A;R)=\mathscr{C}_G^*(X;R)$ when $X$ is coarsely contained in $A$.
Hence Theorem~\ref{c:equicoa=coaquo}\eqref{i:equicoa=coaquo1} follows from Lemma~\ref{l:basic equiv complex}.
  
  Theorem~\ref{c:equicoa=coaquo}\eqref{i:equicoa=coaquo2} follows immediately from Proposition~\ref{p:bdd=singular2} and  Theorem~\ref{t:coa=end 1}.

 \end{proof}

\section{Coarse cohomology of the configuration space}\label{s:coarse configuration}

Let $(X,d)$ be a metric space. Equip $X^2=X\times X$ with the sup metric.
 Consider the $\zz_2$-action on $X^2$ that flips the coordinates.
 Since the fixed point set for this action is the diagonal subspace $\delta(X)=\{(x,x)\mid x\in X\}\subset X^2$,
  we have $\zz_2\acts (X^2,\delta(X))$.
 Let $R$ be an abelian group with a $\zz_2$-action.
 We define \emph{coarse cohomology of the two-points configuration space of $X$} with coefficients in $R$ to be the cohomology of the complex $\Cx_{\zz_2}(X^2 -\delta(X);R)$.
 From now on, for the sake of simplicity, we will omit the term "two-point" from our terminology and refer to it simply as the coarse cohomology of the configuration space.
 
 Theorem \ref{c:equicoa=coaquo} immediately gives us the following.
\begin{Prop}\label{compute HX(conf)}
 If $X$ is unbounded, uniformly acyclic, and  locally acyclic with coefficients in $R$ where $R$ is an abelian group with trivial $\zz_2$ action. Then
 \begin{align*}
     \Hx_{\zz_2}(X^2 -\delta(X);R)=\begin{cases}
     0 & \text{if $*=0$},\\
     \varinjlim\rH[*-1]((X^2-N_r(\delta(X)))/\zz_2;R) & \text{otherwise}.
     \end{cases}
 \end{align*}.
 \end{Prop}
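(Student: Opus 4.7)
The plan is to apply Theorem~\ref{c:equicoa=coaquo}(2) to the $\zz_2$-action on $(X^2,\delta(X))$ by coordinate swap. Most of the hypotheses are immediate: the swap preserves $\delta(X)$, its fixed set $\Fix(X^2)=\delta(X)$ is non-empty, and $R$ has trivial $\zz_2$-action by assumption. To land in case (2) rather than case (1) of the theorem, I need to check that $X^2$ is not coarsely contained in $\delta(X)$. Since $X$ is unbounded, for any $r>0$ there exist $x,y\in X$ with $d(x,y)>2r$; in the sup metric on $X^2$ the triangle inequality gives
\[
d_{X^2}\bigl((x,y),\delta(X)\bigr)=\inf_{z\in X}\max(d(x,z),d(y,z))\geq\tfrac12 d(x,y)>r,
\]
so $X^2$ is not coarsely contained in $\delta(X)$.

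The main work is to promote uniform and local acyclicity of $X$ to $X^2$; the corresponding ``away from $\delta(X)$'' versions needed in Theorem~\ref{c:equicoa=coaquo} are then automatic. Given $B\subset X^2$ with $\diam B=r$, let $B_i=\pi_i(B)\subset X$; then $\diam B_i\leq r$, $B\subset B_1\times B_2$, and a short check (for each $(a,b)\in B_1\times B_2$ there is some $(a,y)\in B$ with $d(b,y)\leq r$) shows $B_1\times B_2\subset N_r(B)$ in the sup metric. If $\rho$ is the acyclicity control of $X$, then $N_\rho(B_1)\times N_\rho(B_2)=N_\rho(B_1\times B_2)\subset N_{r+\rho(r)}(B)$. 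A standard Eilenberg--Zilber argument shows that in positive degrees, $B_1\times B_2\hookrightarrow N_\rho(B_1)\times N_\rho(B_2)$ is trivial on reduced homology: any $n$-cycle with $n>0$ decomposes, up to boundaries, as a sum of tensor products $c_1^{(p)}\otimes c_2^{(q)}$ with $p+q=n$; each summand has $p>0$ or $q>0$, so filling the positive-degree factor in the corresponding $N_\rho(B_i)$ produces a bounding chain for $c_1^{(p)}\otimes c_2^{(q)}$ in the product. Hence $B\hookrightarrow N_{r+\rho(r)}(B)$ kills positive-degree homology, giving uniform acyclicity of $X^2$. Local acyclicity of $X^2$ follows by the same product argument applied to nested neighborhoods $V_1\times V_2\subset U_1\times U_2\subset U$ around a point of $X^2$.

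With all hypotheses verified, Theorem~\ref{c:equicoa=coaquo}(2) applied to $(X^2,\delta(X))$ under the swap action yields the displayed formula. I expect the main obstacle to be the Eilenberg--Zilber/Künneth step promoting acyclicity to the product; the bookkeeping must ensure that problematic Künneth terms (summands with $p=q=0$, or Tor contributions when $R$ is not a PID) live only in degree $n=0$ and hence do not affect the $*\geq 1$ content of the formula.
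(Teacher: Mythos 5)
Your overall route is exactly the paper's: the paper's ``proof'' is the single line ``Theorem~\ref{c:equicoa=coaquo} immediately gives us the following,'' and you are simply making the hypothesis-checking explicit. Your verification that $\Fix(X^2)=\delta(X)\neq\emptyset$, that $X^2$ is not coarsely contained in $\delta(X)$ when $X$ is unbounded, and the metric bookkeeping $B\subset B_1\times B_2\subset N_r(B)$ and $N_{\rho}(B_1)\times N_{\rho}(B_2)\subset N_{r+\rho(r)}(B)$ are all correct. The paper takes the passage from acyclicity of $X$ to acyclicity of $X^2$ for granted; you are right that this is the only nonformal point.

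However, the K\"unneth/Eilenberg--Zilber step as you have written it has a genuine gap. After applying Eilenberg--Zilber, a cycle $z\in \cs_n(B_1\times B_2)$ is homologous to a cycle $w=\sum_i c_{1,i}\otimes c_{2,i}\in (\cs(B_1)\otimes\cs(B_2))_n$, but the individual tensor factors $c_{1,i}$, $c_{2,i}$ are only chains, not cycles, so ``filling the positive-degree factor'' is not a well-defined operation on the summands and does not produce a bounding chain. What is true is that $H_n$ is generated (via the natural K\"unneth short exact sequence) by cross products $\alpha\times\beta$ of honest homology classes together with classes coming from the Tor term $\bigoplus_{p+q=n-1}\mathrm{Tor}(H_p(B_1),H_q(B_2))$. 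The cross products are killed exactly as you say. The Tor classes, which are present for $n\geq 2$ whenever $p,q\geq 1$ even with $\zz$ coefficients and not only ``when $R$ is not a PID,'' require a separate argument: a representative of a class in $\mathrm{Tor}(H_p,H_q)$ has the form $c\otimes b\pm a\otimes e$ with $da=db=0$, $dc=ma$, $de=mb$, and one must check directly that applying $\iota_1\otimes\iota_2$ and using the fillings $\iota_1(a)=dc''$, $\iota_2(b)=de''$ expresses the image as a boundary (this does work, with the correction term $\pm m\, c''\otimes e''$, but it is a computation, not an immediate consequence of the vanishing of each factor). Moreover, one cannot argue ``kills the sub and kills the quotient, hence kills the middle'' on the K\"unneth extension, since the splitting is not natural. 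Finally, general $R$-coefficients should be handled by running the argument over $\zz$ and then tensoring with $R$ or invoking universal coefficients, rather than by a K\"unneth theorem over $R$ itself. None of this changes the truth of the proposition, and your reduction to Theorem~\ref{c:equicoa=coaquo} is the same as the paper's; but the ``standard Eilenberg--Zilber argument'' as stated does not close the loop, and the caveat you flag at the end mislocates the actual difficulty.
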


  \begin{Example}\label{cconfR^n}
 $\rr^n$ satisfies the hypothesis of   Proposition~\ref{compute HX(conf)}. 
Moreover, for any $r$, there is a $\zz_2$-equivariant deformation retraction of $(\rr^{n})^2-\delta(\rr^n)$ to $(\rr^{n})^2-N_r(\delta(\rr^n))$. 
Therefore, applying Proposition~\ref{compute HX(conf)} we obtain the following where the coefficients group is  $\zz_2$ with the trivial $\zz_2$-action.
\begin{align*}
  \Hx_{\zz_2}((\rr^n)^2-\delta(\rr^n);\zz_2)&=\begin{cases}
  0 & \text{if $*=0$}\\
  \rH[*-1](((\rr^{n})^2-\delta(\rr^n))/\zz_2;\zz_2) & \text{otherwise}
  \end{cases}\\
  &=\begin{cases}
  0 & \text{if $*=0$}\\
  \rH[*-1](\rr P^{n-1};\zz_2) & \text{otherwise}
  \end{cases}\\
  &=\begin{cases}
   \zz_2 & \text{if $2\leq *\leq n$}\\ 
  0 & \text{otherwise}
  \end{cases}
  \end{align*}
  \end{Example}
  
 \begin{Example}\label{conflinfty}
 Recall that $\ell^\infty$ 
 is the space of bounded sequences of real numbers with the sup-norm metric.
 The space $(\ell^\infty)^2-\delta(\ell^\infty)$, $\zz_2$-equivariantly deformation retracts  to $(\ell^\infty)^2-N_r(\delta(\ell^\infty))$.
 Since $(\ell^\infty)^2-\delta(\ell^\infty)$ is acyclic, $((\ell^\infty)^2-\delta(\ell^\infty))/\zz_2$ is a classifying space for $\zz_2$ and hence is
 homotopy equivalent to $\rr P^\infty$. Hence arguing as in Example~\ref{cconfR^n} we obtain the following.
 \[
  \Hx_{\zz_2}((\ell^\infty)^2-\delta(\ell^\infty);\zz_2)=\begin{cases}
   \zz_2 & \text{for} \quad *\geq 2\\ 
  0 & \text{otherwise}
  \end{cases}
  \]
 \end{Example}

Next we describe the maps between two metric spaces that induce map between the corresponding coarse cohomology of the configuration spaces. In the topological setting, an injective continuous map induces a  map between the cohomology of the corresponding configuration spaces. In the coarse setting, the role of injective continuous maps are played by \emph{coarse expanding} maps which we define next.

\begin{Definition}
 A map $f:(X,A)\rightarrow (Y,B)$ between pairs is called relatively proper  if $f^{-1}(N_r(B))\csubset A$ for any $r$.
 
 A map $f:(X,A)\rightarrow (Y,B)$ between pairs is called relatively coarse if $f$ is relatively proper and there exists a non-decreasing function $\mu:[0,\infty)\rightarrow [0,\infty)$ such that $d(f(x),f(y))\leq \mu(d(x,y))$ for all $x,y\in X$.

 A map $f:X\rightarrow Y$ is a coarse expanding map if the induced map $(x,y)\mapsto (f(x),f(y))$ from $(X^2,\delta(X))$ to $(Y^2,\delta(Y))$ is a relatively coarse map.  \end{Definition}

\begin{Example}
Recall that a map $f:X\rightarrow Y$ between two metric spaces is said to be a \emph{coarse embedding} if there exist two proper non decreasing maps 
 $\rho_-,\rho_+:[0,\infty)\rightarrow [0,\infty)$ such that 
 \[\rho_-(d(x,y))\leq d(f(x),f(y)) \leq \rho_+(d(x,y)) \quad \text{for all} \quad x,y\in X.
 \]
 One can see that any coarse embedding map  is a coarse expanding map.

\begin{Example}
    An example of a map that is not coarse expanding is the map $x\mapsto |x|$ between real numbers. The reason is that the map $(x,y)\mapsto (|x|,|y|)$ from $(\rr^2,\delta(\rr))$ to itself is not relatively proper. 
\end{Example}
 
\end{Example}
  We now recall the following from~\cite{BB20}.
 \begin{Lemma}\label{p:relcoarsemap}
    For any abelian group $R$, a relatively coarse map $f:(X,A)\to (Y,B)$ between pairs induces chain map $f^*:\Cx(Y-B;R)\to \Cx(X-A;R)$ by the canonical formula
     \[
(f^*\phi)(x_0,x_1,\ldots,x_n)=\phi(f(x_0),f(x_1),\ldots,f(x_n)).
     \]
 \end{Lemma}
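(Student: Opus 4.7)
The Alexander--Spanier pullback formula trivially defines a cochain map $f^*\colon \C(Y;R)\to \C(X;R)$ commuting with $d$ (an easy index juggling, unaffected by $f$ being only coarsely controlled), so the entire content of the lemma is to verify that $f^*$ sends the subcomplex $\Cx(Y-B;R)$ into $\Cx(X-A;R)$. Unwinding the definitions, I must show: if $\Supp{\phi}\ccap\gD_Y\csubset\gD_B$ then $\Supp{f^*\phi}\ccap\gD_X\csubset\gD_A$. Equivalently, given any $r\geq 0$, I must produce $r'$ (depending only on $r$ and on the data $f,\phi$) such that every simplex $\gs=(x_0,\dots,x_n)\in X^{n+1}$ with $(f^*\phi)(\gs)\neq 0$ and $i(\gs)\in N_r(\gD_X)$ automatically satisfies $i(\gs)\in N_{r'}(\gD_A)$.

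The argument will apply the two hypotheses defining a relatively coarse map in sequence. First, the upper-control estimate $d(f(x),f(y))\leq \mu(d(x,y))$ bounds $\diam f(\gs)\leq \mu(2r)$, hence $i(f(\gs))\in N_{\mu(2r)}(\gD_Y)$; since $\phi(f(\gs))=f^*\phi(\gs)\neq 0$, the support hypothesis on $\phi$ yields some $s=s(r)$ and some $b\in B$ with $d(f(x_i),b)\leq s$ for every $i$, i.e.\ each $x_i\in f^{-1}(N_s(B))$. Next, relative properness provides $t=t(r)$ with $f^{-1}(N_s(B))\subset N_t(A)$, placing each $x_i$ within $t$ of $A$. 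Finally, choose any $a\in A$ with $d(x_0,a)\leq t$; combining with $\diam(\gs)\leq 2r$, the triangle inequality yields $d(x_i,a)\leq 2r+t$ for all $i$, hence $i(\gs)\in N_{2r+t}(\gD_A)$, and $r':=2r+t$ works.

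The only mildly subtle step is the last one, which promotes the pointwise statement that each coordinate of $\gs$ lies close to $A$ to the stabilized statement that all coordinates lie close to a \emph{single} point of $A$---this is exactly what the coarse-support condition demands, since $\gD_A$ sits in the diagonal. It works only because the hypothesis $i(\gs)\in N_r(\gD_X)$ already forces $\gs$ to have small diameter, so the triangle inequality bridges the gap at no extra cost; otherwise, relative properness alone would only give the pointwise conclusion.
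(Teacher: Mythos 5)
Your proof is correct. The paper itself does not supply a proof of this lemma, merely citing \cite{BB20} for it, so there is no in-paper argument to compare against; but the argument you give is the natural (and essentially forced) one: use the upper control to push a near-diagonal simplex to a near-diagonal simplex, apply the coarse-support condition on $\phi$ to land near $\gD_B$, apply relative properness to pull each coordinate back near $A$, and then use the already-established small diameter of $\gs$ to consolidate the pointwise statement into closeness to a single diagonal point of $\gD_A$. You correctly flag that last consolidation as the only nontrivial step and correctly explain why it works.
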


 The following lemma follows immediately from Lemma~\ref{p:relcoarsemap}.
 \begin{Lemma}\label{exp map}
      If $f:X\rightarrow Y$ is a coarse expanding map and $R$ is an abelian group, then the  map $(x,y)\mapsto (f(x),f(y))$ induces a map $f^*:\Hx_{\zz_2}(Y^2-\delta(Y);R)\rightarrow \Hx_{\zz_2}(X^2 -\delta(X);R)$.
 \end{Lemma}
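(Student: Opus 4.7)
The plan is to verify directly that the coordinatewise map $F\colon X^2\to Y^2$, $F(x,y)=(f(x),f(y))$, induces a chain map on the equivariant coarse cochains of the complement of the diagonal. Essentially everything we need is packaged into Lemma~\ref{p:relcoarsemap}; the only extra point is to check that the induced map respects the $\zz_2$-structure. I would proceed in the following order.

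First, I would record that by the definition of coarse expanding map, $F\colon (X^2,\delta(X))\to (Y^2,\delta(Y))$ is relatively coarse. Then Lemma~\ref{p:relcoarsemap}, applied to this pair map, produces a chain map
\[
F^{*}\colon \Cx(Y^2-\delta(Y);R)\longrightarrow \Cx(X^2-\delta(X);R),\qquad (F^{*}\phi)(\gs_0,\dots,\gs_n)=\phi(F(\gs_0),\dots,F(\gs_n)).
\]

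Next, I would check $\zz_2$-equivariance of $F^{*}$ at the cochain level. The nontrivial element $\gt\in\zz_2$ acts on $X^2$ (and $Y^2$) by swapping coordinates, and the key observation is that $F$ is $\zz_2$-equivariant: $F(\gt(x,y))=F(y,x)=(f(y),f(x))=\gt F(x,y)$. Consequently, if $\phi\in \eCx(Y^2-\delta(Y);R)$ with equivariance $\phi(\gt\gs)=\gt\cdot \phi(\gs)$, then for any simplex $\gs$ in $(X^2)^{*+1}$,
\[
(F^{*}\phi)(\gt\gs)=\phi(F(\gt\gs))=\phi(\gt F(\gs))=\gt\cdot\phi(F(\gs))=\gt\cdot (F^{*}\phi)(\gs).
\]
Thus $F^{*}$ restricts to a chain map $\eCx(Y^2-\delta(Y);R)\to \eCx(X^2-\delta(X);R)$, and passing to cohomology yields the claimed map $f^{*}\colon \Hx_{\zz_2}(Y^2-\delta(Y);R)\to \Hx_{\zz_2}(X^2-\delta(X);R)$.

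There is no real obstacle: the coarse-analytic content (well-definedness of $F^{*}$ on the complement-type cochains) is entirely absorbed by Lemma~\ref{p:relcoarsemap}, and the equivariance is an immediate formal consequence of the fact that the diagonal action of $\zz_2$ is built from the coordinate swap, which $F$ manifestly intertwines. This is why the authors can assert the lemma follows ``immediately.''
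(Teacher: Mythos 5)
Your proof is correct and takes the same route the paper intends: the paper simply asserts that the lemma "follows immediately from Lemma~\ref{p:relcoarsemap}," and you supply precisely the missing details, namely that $F$ intertwines the coordinate-swap $\zz_2$-actions so that $F^*$ preserves equivariant cochains.
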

  
 \section{Coarse van Kampen obstruction}\label{s:cvk}

 In this section, we find an obstruction to the existence of coarse expanding maps between two metric spaces.
Our key observation is the next proposition.
 
 \begin{Prop}\label{T:universality}
 Any two coarse expanding maps from $X$ to $\ell^\infty$ induce the same map
  from $\Hx_{\zz_2}((\ell^\infty)^2-\delta(\ell^\infty);R)$ to  $\Hx_{\zz_2}(X^2 -\delta(X);R)$.
 \end{Prop}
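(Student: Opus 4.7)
I would approach this by building an explicit chain homotopy between $f^*$ and $g^*$ at the cochain level, via an Alexander--Spanier prism operator, after replacing a given cocycle by a cohomologous representative with controlled support. From the coarse expanding maps $f,g\colon X\to \ell^\infty$ one obtains $\zz_2$-equivariant relatively coarse maps $F(x,y)=(f(x),f(y))$ and $G(x,y)=(g(x),g(y))$ from $(X^2,\delta(X))$ to $((\ell^\infty)^2,\delta(\ell^\infty))$. For $\phi\in \Cx_{\zz_2}^n((\ell^\infty)^2-\delta(\ell^\infty);R)$ and $\sigma=((x_0,y_0),\dots,(x_{n-1},y_{n-1}))$ in $(X^2)^n$, I would set
\[
P\phi(\sigma)\;=\;\sum_{i=0}^{n-1}(-1)^i\,\phi\bigl(F(x_0,y_0),\dots,F(x_i,y_i),G(x_i,y_i),\dots,G(x_{n-1},y_{n-1})\bigr).
\]
Granted that $P$ sends $\Cx_{\zz_2}$ into $\Cx_{\zz_2}$, the $\zz_2$-equivariance of $P\phi$ is immediate from that of $F$, $G$, and $\phi$, and the standard simplicial identity $dP+Pd=f^*-g^*$ is a routine combinatorial check; this then yields $f^*=g^*$ on cohomology.

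The substantive point is the coarse support condition on $P\phi$. If $\sigma$ lies in $N_k(\gD_{(X^2)^\infty})$, its vertices cluster near some $(a,b)\in X^2$, and then each mixed simplex appearing in the prism has vertices split into two sub-clusters of $(\ell^\infty)^2$, one near $(f(a),f(b))$ and one near $(g(a),g(b))$. Since $f$ and $g$ need not be at bounded distance in $\ell^\infty$, these sub-clusters can be arbitrarily far apart, so the mixed simplex is not a priori close to the diagonal of $((\ell^\infty)^2)^\infty$, and the coarseness hypothesis on $\phi$ gives no direct control on its value there. This is the only essential difficulty.

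The way I would resolve this is to exploit the uniform contractibility of $\ell^\infty$. By Theorem~\ref{t:coa=end 1}, together with a close look at the connect-the-dots construction of Proposition~\ref{l:cs} (iterated to tighten the support around the codiagonal), every class in $\Hx_{\zz_2}^*((\ell^\infty)^2-\delta(\ell^\infty);R)$ can be represented by a cocycle $\phi$ whose support $\Supp{\phi}$ is contained in a fixed metric neighborhood of $\gD_{\delta(\ell^\infty)}$ inside $((\ell^\infty)^2)^\infty$. For such a representative, every mixed simplex in $\Supp{\phi}$ has all its vertices within a uniformly bounded distance of $\delta(\ell^\infty)$, which bounds $\|f(x_j)-f(y_j)\|$ and $\|g(x_j)-g(y_j)\|$ uniformly. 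The uniform properness of $f$ and $g$, built into the definition of coarse expanding, then bounds each $|x_j-y_j|$ uniformly in terms of $k$ alone, so $(a,b)$ is forced to lie in a bounded neighborhood of $\delta(X)$. This is exactly the coarse support condition on $P\phi$, and so $P$ defines a chain homotopy on $\Cx_{\zz_2}$, giving $f^*=g^*$ on cohomology.
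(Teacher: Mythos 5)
The difficulty you isolate is the right one, but the proposed resolution fails. The claim that every class in $\Hx_{\zz_2}^*((\ell^\infty)^2-\delta(\ell^\infty);R)$ has a cocycle representative $\phi$ with $\Supp{\phi}\subset N_R(\gD_{\delta(\ell^\infty)})$ for a \emph{fixed} $R$ is not what Theorem~\ref{t:coa=end 1} gives, and in fact is false. Theorem~\ref{t:coa=end 1} shows $\eCx\hookrightarrow\eCb$ is a quasi-isomorphism, so each class has a representative with $\spp{\phi}\csubset\gD_{\delta(\ell^\infty)}$; but $\spp{\phi}$ is only the closure-of-support intersected with the diagonal of $X^{\infty}$, not the full stabilized support $\Supp{\phi}$. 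The full-support condition you need would force $\phi$ to vanish on all simplices of diameter $>2R$, and such representatives do not exist in general. Consider the analogous situation for $X=\rr$ and the generator of $\Hx[1](\rr^{2}-\delta(\rr);\zz_2)$, represented by $d\phi_0$ where $\phi_0$ is the indicator of $\{x>y\}$. Any other representative is $d(\phi_0+\psi)$ with $\psi$ supported in a bounded neighborhood of $\delta(\rr)$; having $\Supp{d(\phi_0+\psi)}\subset N_R(\gD_{\delta(\rr)})$ would force $\phi_0+\psi$ to be constant outside $N_{R'}(\delta(\rr))$, which is impossible since there $\psi$ vanishes and $\phi_0$ takes both values. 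The same obstruction applies to the relevant degree-$n$ classes for $\ell^\infty$, so your prism operator is not defined on the whole complex you need it on.

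The paper circumvents this not by choosing a nicer cocycle but by building a \emph{controlled} chain homotopy at the singular chain level. For each simplex $\sigma$ of $(X^2)^*$ one forms the annulus $C_\sigma$ around $\delta(\ell^\infty)$ bounded by the distances of $|S(f(\sigma))|$ and $|S(g(\sigma))|$ to the diagonal; crucially this annulus is contractible (it is homotopy equivalent to the unit sphere of $\ell^\infty$, which is contractible), so one can inductively choose fillings $D_i(\sigma)$ supported in $C_\sigma$ and with controlled size. Dualizing $VD$ and composing with the already-controlled homotopy $H$ from Proposition~\ref{l:cs} gives a cochain homotopy on all of $\Cx_{\zz_2}$, with the support condition recovered from properties (2) and (3) of the $D_i$. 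This is the geometric input that your naive prism operator misses: it does not stay within those annuli, and the cochains it needs to evaluate on mixed simplices far from the codiagonal cannot be controlled.
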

 
 \begin{proof} 
For convenience, we will suppress the coefficient $R$ from the notation in the proof.

Suppose $f,g:X^2 \rightarrow (\ell^\infty)^2$ are two maps induced by two coarse expanding maps from $X$ to $\ell^\infty$.
For convenience,  we will denote the induced map between $\cf(X^2)$ and $\cf((\ell^\infty)^2)$ by $f$ and $g$ as well.
 Let $f^*$ and $g^*$ be the corresponding map from $\Cx_{\zz_2}((\ell^\infty)^2-\delta(\ell^\infty))$ to  $\Cx_{\zz_2}(X^2-\delta(X))$.
 We will show that there is a chain homotopy $\Cx_{\zz_2}((\ell^\infty)^2-\delta(\ell^\infty))\rightarrow  \Cx[*+1]_{\zz_2}(X^2-\delta(X))$ between $f^*$ and $g^*$.
 
 Since $(\ell^\infty)^2$ is uniformly contractible, the proof of  Proposition~\ref{l:cs} (see remark~\ref{r:filling}) gives us a $\zz_2$-equivariant  chain map $S:\cf((\ell^\infty)^2)\rightarrow \cs((\ell^\infty)^2)$ such that $|S(\gs^n)|\subset N_{\rho_n(\diam(\gs^n))}(\gs^n)$ for some non-decreasing sequence of functions $\rho_n:[0,\infty)\rightarrow [0,\infty)$.
 Moreover, by Proposition~\ref{l:cs}, the composition $V S$ is chain
 homotopic to the identity map by 
 a $\zz_2$-equivariant chain homotopy $H$ such that $|H(\gs^n)|\subset N_{\rho_n(\diam(\gs^n))}(\gs^n)$.

 We now construct a chain homotopy $D_*:\cf(X^2)\rightarrow \cs[*+1]((\ell^\infty)^2)$ between the two maps $S\circ f, S\circ g:\cf(X^2)\rightarrow \cs((\ell^\infty)^2)$ with certain properties: in particular we want the homotopy to avoid $\delta(\ell^\infty)$.

For convenience let us fix some notations.
 For a singular chain $c\in \cs(X)$, let \[\size(c):=\sup_{\tau \in |c|}\{\diam(\tau)\}.
 \]
 For a simplex $\gs\in (X^2)^*$,
 let 
 \[r_{\sigma}:=\min\{d(|S(f(\gs))|,\delta(\ell^\infty)),d(|S(g(\gs))|, \delta(\ell^\infty))\}
 \]
 and 
 \[R_{\sigma}:= \max\{d(|S(f(\gs))|,\delta(\ell^\infty)),d(|S(g(\gs))|,\delta(\ell^\infty))\}.
 \]
 Let $C_\gs\subset (\ell^\infty)^2$ be the following annulus around $\delta (\ell^\infty)$:
 \[\{z\mid r_\gs\leq d(z,\delta(\ell^\infty))\leq R_\gs\}.
 \]
We note that $C_\gs$ is acyclic because it is homotopy equivalent to $S^\infty$.

We claim that there exists a $D_i:\cf[i](X^2)\rightarrow \cs[i+1]((\ell^\infty)^2)$ such that for any $i$-simplex $\gs$ the following hold.
\begin{enumerate}
\item \label{prop1} $S\circ f(\gs)-S\circ g(\gs)=\partial D_i(\gs)+D_{i-1}\partial(\gs)$
   \item \label{prop2} $\Supp{D_i(\sigma)} \subset C^{i+2}_\gs$. 
 \item \label{prop3} $\size(D_i(\sigma)) \leq \size(S(f(\sigma))-S(g(\sigma))-D_{i-1}\partial(\sigma))$
 \item \label{prop4} $D_i$ is $\zz_2$-equivariant.
\end{enumerate}

We will defer the construction of $D_i$ to Lemma~\ref{D}. Assuming the existence of such $D_i$, we now complete the proof of the proposition.

Let $V:\cs\rightarrow \cf$ be the map that sends a singular simplex to its vertices.
 Since $V\partial=\partial V$,
applying $V$ on both sides of \eqref{prop1}, we have the following
\begin{equation}\label{h1}
    VS(f-g)=\partial VD_i +VD_{i-1} \partial \hfill \tag{*}
\end{equation}
Recall that $V S$ is homotopic to the identity map with a $\zz_2$-equivariant chain homotopy $H$ such that $|H(\gs^n)|\subset N_{\rho_n(\diam(\gs^n))}(\gs^n)$. So, we have
\begin{equation}\label{h2}
VS=id+\d H_i+H_{i-1}\d \hfill \tag{**}
\end{equation}
Combining \eqref{h1} and \eqref{h2}, we obtain
\[
f-g=\d[VD_i+H_i(f-g)]+[VD_{i-1}+H_{i-1}(f-g)]\d.
\]
Dualizing the above we get $f^*$ and $g^*$ are chain homotopic via the cochain homotopy $(VD)^*+(f-g)^*H^*$.
To complete the proof, we need to show that this cochain homotopy maps $\Cx_{\zz_2}((\ell^\infty)^2-\delta(\ell^\infty))$ to $\Cx[*-1]_{\zz_2}((\ell^\infty)^2-\delta(\ell^\infty))$

Since $|H(\gs^n)|\subset N_{\rho_n(\diam(\gs^n))}(\gs^n)$ and $H$ is $\zz_2$-equivariant, the dual $H^*$ maps $\Cx_{\zz_2}((\ell^\infty)^2-\delta(\ell^\infty))$ to $\Cx[*-1]_{\zz_2}((\ell^\infty)^2-\delta(\ell^\infty))$.
Consequently, $(f-g)^*H^*$ maps $\Cx_{\zz_2}((\ell^\infty)^2-\delta(\ell^\infty))$ to $\Cx[*-1]_{\zz_2}(X^2-\delta(X))$.
It is therefore enough to prove that $(VD)^*$ maps $\Cx_{\zz_2}((\ell^\infty)^2-\delta(\ell^\infty))$ to $\Cx[*-1]_{\zz_2}(X^2-\delta(X))$.
Let $\phi\in \Cx_{\zz_2}((\ell^\infty)^2-\delta(\ell^\infty))$ and $\gs\in |(VD)^*(\phi)|\cap N_r(\gD)$ for some $r\geq 0$.
 Since $\gs\in N_r(\gD)$, and $f$ and $g$ are coarse expanding maps, we have $|S(f(\gs))|\subset N_s(\gD)$ and $|S(g(\gs))|\subset N_s(\gD)$ for some $s$ that depends only on $r$.
 Property \eqref{prop3} then implies that $|VD(\gs)|\subset N_s(\gD)$ for some $s$ that depends only on $r$.
Since $\phi\in \Cx_{\zz_2}((\ell^\infty)^2-\delta(\ell^\infty))$ and $\phi(VD(\gs))\neq 0$, it then follows that $|VD(\gs))|\subset N_t(\gD_{\delta(\ell^\infty)})$ where $t$ depends only on $s$ and hence depends only on $r$.
It now follows from property \eqref{prop2} that $\gs\in N_p(\Delta_{\delta(X))})$ for some $p$ that depends only on $t$ and hence only on $r$.
Hence, we proved that for each $r\geq 0$, there exists $p\geq 0$ such that $|(VD)^*(\phi)|\cap N_r(\gD)\subset N_p(\Delta_{\delta(X)})$. 
Finally, $(VD)^*(\phi)$ is $\zz_2$-equivariant by property~\eqref{prop4}.
Hence, $(VD)^*(\phi)\in \Cx[*+1]_{\zz_2}(X^2-\delta(X))$. This finishes the proof.

\end{proof}

 \begin{Lemma}\label{D}
    Let $f$ and $g$ are two coarse expanding maps from $X$ to $\ell^\infty$. 
    Let $S:\cf((\ell^\infty)^2)\rightarrow \cs((\ell^\infty)^2)$ be $\zz_2$-equivariant chain map such that $|S(\gs^n)|\subset N_{\rho_n(\diam(\gs^n))}(\gs^n)$ for some sequence of functions $\rho_n:[0,\infty)\rightarrow [0,\infty)$.    
    Then, for each $i$, there exists  $D_i:\cf[i](X^2)\rightarrow \cs[i+1]((\ell^\infty)^2)$ such that for any $i$-simplex $\gs$ the following hold.
\begin{enumerate}
\item \label{lprop1} $S\circ f(\gs)-S\circ g(\gs)=\partial D_i(\gs)+D_{i-1}\partial(\gs)$
   \item \label{lprop2} $\Supp{D_i(\sigma)} \subset C^{i+2}_\gs$. 
 \item \label{lprop3} $\size(D_i(\sigma)) \leq \size(S(f(\sigma))-S(g(\sigma))-D_{i-1}\partial(\sigma))$
 \item \label{lprop4} $D_i$ is $\zz_2$-equivariant.
\end{enumerate}
\end{Lemma}
\begin{proof}
    We first define $D_0:\cf[0](X^2)\rightarrow \cs[1]((\ell^\infty)^2)$.
  For a $0$-simplex $\gs\in X^2$, join $f(\gs)$ and $g(\gs)$ by a path $\gamma$ in $C_\gs$.
Such a path exists because the annulus $C_\gs$ is contractible. 
Next, we subdivide $\gamma$ so that each subarc is of diameter $\leq 1$. 
We define $D_0(\gs)$ to be this subdivided path.
By construction, $D_0$ satisfies the first three desired properties.
To make it $G$-equivariant, we first define $D_0$ on a simplex from each $G$-orbit of simplices and then move it $\zz_2$-equivariantly.
It is straightforward to see that $D_0$ still satisfies the first property because $\d$ commutes with the $G$-action.
It satisfies the second property because $gC_\gs=C_{g\gs}$ for any $g\in \zz_2$.
$D_0$ satisfies the third property because isometric action preserves size of chains.

Inductively, let us  assume that $D_i:\cf[i](X^2)\rightarrow \cs[i+1]((\ell^\infty)^2)$ is already  defined for all $i\leq n$ with the desired properties.  
To define $D_{n+1} (\sigma)$, let $K=S(f(\sigma))-S(g(\sigma))-D_n(\partial (\sigma))$. By induction hypothesis \eqref{lprop1}, $\partial K= D_{n-1}(\partial^2( \sigma))=0$ and hence $K$ is a cycle. 
By \eqref{lprop2},  $\Supp{K} \subset C_\gs$.
Since  $C_\gs$ is acyclic,
 there exists a singular chain $c$ supported in $C_\gs$ such that $\partial c = K$. After applying the appropriate subdivision, we can make $c$ to satisfy $\size(c) \leq \size(K)$ without changing its boundary. 
Define $D_{n+1} (\sigma)$ to be that $c$.
By construction, $D_{n+1}$ satisfies conditions \eqref{lprop1}, \eqref{lprop2}, and \eqref{lprop3}. 
To make $D_{n+1}$ satisfy \eqref{lprop4}, we can use the same trick as before: first define it on a simplex from each $\zz_2$-orbit of $(n+1)$-simplices and then extend equivariantly.
For the similar reason as $D_0$, this $D_{n+1}$ has all the desired four properties.
\end{proof}

 \begin{Coarse van Kampen obstruction class}
 Let $X$ be a separable metric space and $g:X\rightarrow \ell^\infty$ be a coarse expanding map. Such a map exists because any separable metric space admits an isometric embedding into $\ell^\infty$ by the work of Fr\'{e}chet (\cite{Frechet}).
 We consider the induced map $g^*:\Hx[n]_{\zz_2}((\ell^\infty)^2-\delta(\ell^\infty);\zz_2)\rightarrow \Hx[n]_{\zz_2}(X^2-\delta(X);\zz_2)$.
 Recall from  Example~\ref{conflinfty} that $\Hx[n]_{\zz_2}((\ell^\infty)^2-\delta(\ell^\infty);\zz_2) = \zz_2$ if $n \geq 2$.
 Let $e^{n}$ be the nontrivial element in $\Hx[n]_{\zz_2}((\ell^\infty)^2-\delta(\ell^\infty))$ for $n\geq 2$.
 Proposition~\ref{T:universality} implies that $g^*(e^n)$ depends only on the space $X$, not on $g$.
We call the class  $g^*(e^{n})$ to be the \emph{$n^{th}$ degree coarse van Kampen obstruction class} of $X$ and denote it by $cvk^n(X)$ where $n\geq 2$.
\end{Coarse van Kampen obstruction class}

\begin{Assumption}
From now on all our metric spaces will be separable. In the non-equivariant setting, our coefficient group will be  $\zz_2$ and 
in the equivariant setting, the coefficient group will be $\zz_2$ with the trivial $\zz_2$-action unless stated otherwise.
In those cases, we will omit the coefficient from the notation.
\end{Assumption}

\begin{Definition}[Coarse obstruction dimension]\label{cobdim def}
The \textit{coarse obstruction dimension} of a  space $X$, denoted by $\cobdim(X)$, is $0$ if $X$ is bounded, is $1$ if $cvk^n(X)=0$ for all $n$, and otherwise, it is the largest $n$ such that $cvk^{n}(X)\neq 0$.
 \end{Definition}
 
 Now we prove the main theorem of this section.
 \begin{Theorem}\label{t:cobdim increases}
 If $X$ admits a coarse expanding map into $Y$, then  $\cobdim(X)\leq \cobdim(Y)$.
 \end{Theorem}
 
 \begin{proof}
 If $\cobdim(X)=0$, then there is nothing to prove.
 
 If $\cobdim(X)= 1$, then $X$ is unbounded by definition. This means $Y$ is also unbounded and hence $\cobdim(Y)\geq 1$ by definition.

 Suppose $\cobdim(X)=n\geq 2$.
 Let $g:Y\rightarrow \rr^\infty$ be a coarse expanding map.
 Consider the following composition. 
 \[ X\times X \xrightarrow{f} Y\times Y \xrightarrow{g} \ell^{\infty}\times \ell^{\infty}
 \]
 By Proposition \ref{exp map}, the above maps induce the  following maps between coarse cohomology of the configuration spaces
 \[\Hx[n]_{\zz_2}((\ell^\infty)^2-\delta(\ell^\infty))\xrightarrow{g^*}\Hx[n]_{\zz_2}(Y^2-\delta(Y))\xrightarrow{f^*}\Hx[n]_{\zz_2}(X^2-\delta(X))
 \]
 Let $e^n\in \Hx[n]_{\zz_2}((\ell^\infty)^2-\delta(\ell^\infty))$ be the generator.
 Then $cvk^n(Y)=g^*(e^n)$ and $cvk^n(X)=f^*g^*(e^n))=f^*(cvk^n(Y))$.
By assumption $cvk^n(X)\neq 0$ and hence $cvk^n(Y)\neq 0$.
So, we get $\cobdim(Y)\geq n$.
 \end{proof}
  Recall that $X$ and  $Y$ are said to be \emph{coarsely equivalent} if there exists a coarse embedding map $f:X\rightarrow Y$ such that $Y\csubset f(X)$.
 One can observe that two coarsely equivalent spaces coarsely embed into each other. 
 As a consequence, the above theorem immediately yields the following.
 \begin{Corollary}
     If $X$ and $Y$ are coarsely equivalent, then $\cobdim(X)=\cobdim(Y)$.
 \end{Corollary}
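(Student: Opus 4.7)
The plan is to derive this as an immediate consequence of Theorem~\ref{t:cobdim increases}, by observing that a coarse equivalence yields coarse expanding maps in both directions. Recall that a coarse equivalence between $X$ and $Y$ consists of a pair of coarse embeddings $f:X\to Y$ and $h:Y\to X$ whose compositions are close to the respective identities; in particular, both $f$ and $h$ are coarse embeddings on their own.

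The key observation I need is already noted in the paper immediately after the definition of coarse expanding map: every coarse embedding is automatically a coarse expanding map. Indeed, if $f:X\to Y$ is a coarse embedding with control functions $\rho_-,\rho_+$, then the induced map $(x,y)\mapsto(f(x),f(y))$ on $X^2$ has upper control $\rho_+$ (using the sup metric on products), and the lower control $\rho_-$ together with properness of $\rho_-$ guarantees $f^{-1}(N_r(\delta(Y)))\csubset \delta(X)$, so the induced map $(X^2,\delta(X))\to(Y^2,\delta(Y))$ is relatively coarse.

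Therefore from $f:X\to Y$ and $h:Y\to X$ I obtain coarse expanding maps in both directions. Applying Theorem~\ref{t:cobdim increases} to $f$ gives $\cobdim(X)\leq\cobdim(Y)$, and applying it to $h$ gives $\cobdim(Y)\leq\cobdim(X)$. Combining, $\cobdim(X)=\cobdim(Y)$.

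There is really no obstacle here; the content is entirely in Theorem~\ref{t:cobdim increases} and the elementary observation that coarse embeddings are coarse expanding. The only minor care needed is to spell out that both directions of the coarse equivalence qualify as coarse expanding maps, which is a one-line unwinding of definitions.
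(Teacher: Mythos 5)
Your proof is correct and is exactly the argument the paper has in mind: a coarse equivalence gives coarse embeddings (hence coarse expanding maps) in both directions, so Theorem~\ref{t:cobdim increases} applied twice yields both inequalities. The paper states the corollary as ``immediate'' without further detail, and your one-line unwinding matches that.
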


In Example~\ref{cconfR^n}, we saw that $\Hx_{\zz_2}((\rr^n)^2-\delta(\rr^n))=0$ for all $*>n$. 
 Hence $\cobdim(\rr^n)\leq n$. Using Theorem~\ref{t:cobdim increases}, we obtain

 \begin{Corollary}\label{c:obstruction rn}
If $\cobdim(X)\geq n$, then $X$ does not admit a coarse expanding map into $\rr^{n-1}$.
 \end{Corollary}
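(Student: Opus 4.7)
The plan is to derive the corollary as a direct combination of two inputs: the computation of $\Hx_{\zz_2}$ of $\Conf(\rr^{n-1})$ already carried out in Example~\ref{confR^n}, and the monotonicity of $\cobdim$ under coarse expanding maps established in Theorem~\ref{t:cobdim increases}. I would argue by contradiction: assume a coarse expanding map $f\colon X\to \rr^{n-1}$ exists, and derive that $\cobdim(X)<n$, contradicting the hypothesis.

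First, I would record the bound $\cobdim(\rr^{n-1})\leq n-1$. This is immediate from Example~\ref{confR^n}, which gives $\Hx[k]_{\zz_2}((\rr^{n-1})^2-\delta(\rr^{n-1}))=0$ for all $k\geq n$. Since the class $cvk^{k}(\rr^{n-1})$ is by definition the image of the generator $e^{k}\in \Hx[k]_{\zz_2}((\ell^\infty)^2-\delta(\ell^\infty))$ under the map induced by a coarse expanding embedding $\rr^{n-1}\hookrightarrow \ell^\infty$, this class must be zero whenever the receiving group vanishes, so $cvk^{k}(\rr^{n-1})=0$ for all $k\geq n$. By Definition~\ref{cobdim def}, this forces $\cobdim(\rr^{n-1})\leq n-1$.

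Second, I would apply Theorem~\ref{t:cobdim increases} to the hypothesized coarse expanding map $f\colon X\to \rr^{n-1}$, which yields
\[
\cobdim(X)\leq \cobdim(\rr^{n-1})\leq n-1.
\]
This directly contradicts the standing assumption $\cobdim(X)\geq n$, completing the argument.

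There is essentially no obstacle here; the work has already been done in Theorem~\ref{t:cobdim increases} and Example~\ref{confR^n}. The only point worth double-checking is that Example~\ref{confR^n}'s vanishing range (degrees $k>n-1$) is stated for $\zz_2$-coefficients with the trivial action, which matches the coefficient convention fixed in the Assumption preceding Definition~\ref{cobdim def}, so the invocations line up without adjustment.
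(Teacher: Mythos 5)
Your proof is correct and follows exactly the paper's own route: establish $\cobdim(\rr^{n-1})\leq n-1$ from the vanishing of $\Hx_{\zz_2}$ of the configuration space in degrees above $n-1$, then invoke Theorem~\ref{t:cobdim increases} for the hypothesized coarse expanding map. The only cosmetic difference is that you phrase it as a contradiction and cite Example~\ref{confR^n} directly (which is the cleaner reference; the paper cites Example~\ref{conf(R^n)}, but the content invoked is the same).
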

 
 \section{Relation to the classical van Kampen obstruction}\label{relation between van kampens}
Let us recall the classical van Kampen obstruction class. 
Let $X$ be any topological space. 
Any continuous  embedding $f:X\hookrightarrow \rr^\infty$ induces a map 
\[f:\H((\rr^\infty)^2-\delta(\rr^\infty)/{\zz_2})\rightarrow \H((X^2-\delta(X))/\zz_2).\]
This map depends only on $X$ because the quotient map \[q:(\rr^\infty)^2-\delta(\rr^\infty)\rightarrow ((\rr^\infty)^2-\delta(\rr^\infty))/{\zz_2}\]
is a universal $\zz_2$-bundle.
Let $\eta^*\in \H((\rr^\infty)^2-\delta(\rr^\infty)/{\zz_2})$ be the generator.
 The cohomology class $f(\eta^*)$ is called the van Kampen obstruction class in degree $*$ and will be denoted by $vk^*(X)$.
Note that the quotient map $(\ell^\infty)^2-\delta(\ell^\infty) \rightarrow ((\ell^\infty)^2-\delta(\ell^\infty))/{\zz_2}$ can also be considered as a universal $\zz_2$-bundle because $(\ell^\infty)^2-\delta(\ell^\infty)$ is contractible.
Hence we can use $\ell^\infty$ instead of $\rr^\infty$ to define $vk^*(X)$.
 We use this viewpoint in the next proposition.

\begin{Prop}\label{compute cvk} 
Let $X$ be a metric space. Suppose $i:\H[*]((X^2-\delta(X))/\zz_2)\rightarrow \varinjlim \H[*]((X^2-N_r(\delta(X)))/\zz_2)$ is the map induced by inclusions $X^2-N_r(\delta(X))\hookrightarrow X^2-\delta(X)$ for each $r>0$. If $i(vk^{n-1}(X))$ is nontrivial for some $n\geq 2$, then $cvk^n(X)$ is nontrivial. 
 \end{Prop}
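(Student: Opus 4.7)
The plan is to route both $cvk^n(X)$ and $i(vk^{n-1}(X))$ through the equivariant boundedly supported cohomology $\eHb[n](X^2-\delta(X))$, show they agree there, and conclude. Fix an isometric (hence coarse expanding) embedding $f:X\hookrightarrow \ell^\infty$ provided by Fr\'echet. The inclusion of cochain complexes $\eCx(X^2-\delta(X))\hookrightarrow \eCb(X^2-\delta(X))$ induces a natural transformation $\alpha:\eHx[*]\to\eHb[*]$; and Proposition~\ref{p:bdd=singular2} combined with Honkasalo's theorem~\ref{equi=ord of quotient} identifies $\eHb[n](X^2-\delta(X))$ with $\varinjlim \rH[n-1]((X^2-N_r(\delta(X)))/\zz_2)$. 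Under this identification, the restriction-to-$(X^2-N_r(\delta(X)))$ map on Alexander--Spanier cohomology is precisely the map $i$ in the statement; call this map $\beta$. All three functors are natural in pairs under relatively coarse maps, so $f$ produces commutative squares for $\alpha$ and for $\beta$ linking $\ell^\infty$ to $X$.

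The key step is to verify, on $\ell^\infty$, that $\alpha(e^n)=\beta(\eta^{n-1})$ in $\eHb[n]((\ell^\infty)^2-\delta(\ell^\infty))$. Since $\ell^\infty$ is uniformly contractible, Theorem~\ref{t:coa=end 1} makes $\alpha$ an isomorphism on $\ell^\infty$; since $(\ell^\infty)^2-\delta(\ell^\infty)$ deformation retracts $\zz_2$-equivariantly onto $(\ell^\infty)^2-N_r(\delta(\ell^\infty))$ for every $r$, the directed system is essentially constant and $\beta$ is likewise an isomorphism on $\ell^\infty$. All three of $\Hx[n]_{\zz_2}((\ell^\infty)^2-\delta(\ell^\infty))$, $\eHb[n]((\ell^\infty)^2-\delta(\ell^\infty))$, and $\rH[n-1](((\ell^\infty)^2-\delta(\ell^\infty))/\zz_2)$ are copies of $\zz_2$ for $n\geq 2$, with nonzero classes $e^n$, $\alpha(e^n)$, and $\eta^{n-1}$ respectively. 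Since $\alpha$ and $\beta$ both send generator to generator, $\alpha(e^n)=\beta(\eta^{n-1})$.

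Naturality of $f^*$ then closes the argument: inside $\eHb[n](X^2-\delta(X))$ we compute
\[
\alpha(cvk^n(X))=\alpha(f^*(e^n))=f^*(\alpha(e^n))=f^*(\beta(\eta^{n-1}))=\beta(f^*(\eta^{n-1}))=\beta(vk^{n-1}(X)),
\]
and the right-hand side is exactly $i(vk^{n-1}(X))$ under the identification above. Hence if $i(vk^{n-1}(X))\neq 0$ then $\alpha(cvk^n(X))\neq 0$, and in particular $cvk^n(X)\neq 0$. The main step that needs a little bookkeeping is the identification of $i$ with $\beta$ coming out of Proposition~\ref{p:bdd=singular2} together with Honkasalo's theorem; this is a standard comparison of long exact sequences once the cochain models are aligned, and I would isolate it as a short naturality lemma before carrying out the diagram chase above.
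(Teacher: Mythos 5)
Your proof is correct and takes essentially the same route as the paper: both factor $cvk^n$ and $i(vk^{n-1})$ through the natural map from equivariant coarse cohomology of the complement into the $\varinjlim$-presentation of the boundedly supported version, and both use the fact that on $\ell^\infty$ all the relevant groups are single copies of $\zz_2$ to match up the generators. The paper simply draws one three-row commutative diagram where you have broken the same diagram into two naturality squares with explicit names $\alpha$ and $\beta$; your acknowledged bookkeeping step (that $\beta$ really is the map $i$ under the Proposition~\ref{p:bdd=singular2} plus Honkasalo identification) is the same identification the paper uses silently in its diagram.
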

 \begin{proof}
  Let $f:X\rightarrow \ell^\infty$ be an isometry.
  Fix $n\geq 2$.
  Suppose $i(vk^{n-1}(X))\neq 0$.
  We consider the following commutative diagram where all the horizontal maps are canonically induced by $f$.  
  We want to show that the top horizontal map is nontrivial. 
  \[\begin{tikzcd}
  &\Hx[n]_{\zz_2}((\ell^\infty)^2-\delta(\ell^\infty))\arrow{r}{f^*}\arrow{d}{\cong} &\Hx[n]_{\zz_2}(X^2-\delta(X))\arrow{d}\\
  &\varinjlim\H[n-1](((\ell^\infty)^2-N_r(\delta(\ell^\infty)))/\zz_2)\arrow{r}{f^*} &\varinjlim \H[n-1]((X^2-N_r(\delta(X)))/\zz_2)\\
  &\H[n-1](((\ell^\infty)^2-\delta(\ell^\infty))/\zz_2)\arrow{r}{f^*}\arrow{u} &\H[n-1]((X^2-\delta(X))/\zz_2)\arrow{u}{i}
  \end{tikzcd}
  \]
  The top left isomorphism map is due to the Proposition~\ref{compute HX(conf)}.

  The image of the generator of $\H[n-1](((\ell^\infty)^2-\delta(\ell^\infty))/\zz_2)$ under the  bottom horizontal map is  $vk^{n-1}(X)$ if $n\geq 2$.
 Since $i(vk^{n-1}(X))\neq 0$, the commutativity of the above diagram implies that the middle horizontal map is nontrivial. 
 Again using commutativity of the diagram, we conclude that the top horizontal map is nontrivial.
  \end{proof}

Suppose  $X=K\times [0,\infty)/K\times \{0\}$ is the open cone on a finite simplicial complex $K$.
         A metric $d$ on $X$ is called \emph{expanding} if for any two disjoint simplices $\gs,\tau\in K$ and $S\geq 0$, there exists $r\geq 0$ such that $d(\gs\times [r,\infty),\tau\times [r,\infty))\geq S$.
\begin{Prop}\label{vk to cvk}
    Let $X$ be an open cone on a finite simplicial complex $K$ and $X$ is equipped with an expanding metric. If $vk^{n-1}(X)\neq 0$ for some $n\geq 2$, then $cvk^n(X)\neq 0$.
\end{Prop}
\begin{proof}
     Since the metric on $X$ is expanding, any representative of  $c\in \h(\Conf(X))$ can be homotoped to a cycle that lives in $\H((X^2-N_r(\delta(X)))/\zz_2)$ for any $r$.
         Considering the dual, this implies that the restriction map $\rH[*]((X^2-\delta(X))/\zz_2)\rightarrow \varinjlim \rH[*]((X^2-N_r(\delta(X)))/\zz_2)$ sends $vk^{n-1}(X)$ to a nontrivial element for any $r\geq 0$.
         Hence, $i(vk^{n-1}(X))$ is nontrivial where $i$ is as in Proposition~\ref{compute cvk}.
         Hence Proposition~\ref{compute cvk} implies $cvk^{n+1}(X)\neq 0$.
\end{proof}
 \begin{Example}[Bestvina--Kapovich--Kleiner obstruction]\label{BKK obstruction}
         One of the results of Bestvina, Kapovich, and Kleiner in~\cite{BKK} can be stated as follows: if there is class $c\in \h[n](\Conf(X))$ such that $vk^{n}(X)(c)\neq 0$, then $X$ with a proper, expanding metric cannot be coarsely embedded inside $\rr^n$.
         One can see that Proposition~\ref{vk to cvk} combined with Corollary~\ref{c:obstruction rn} recovers Bestvina--Kapovich--Kleiner's result.
\end{Example}
\begin{Remark}
Note that that $vk^{n-1}(X)\neq 0$ does not necessarily imply $cvk^n(X)\neq 0$. For example, take $X$ to be a unit disk in $\rr^2$, then $vk^1(X)\neq 0$. However, since $X$ is bounded, Proposition~\ref{p:bdd=singular2} implies $\Hx[2]_{\zz_2}(X^2-\delta(X))=0$, and therefore $cvk^2(X)=0$.
\end{Remark}
 
 \section{A Coarse Gysin sequence}\label{s:coarse gysin}
 Recall that coarse van Kampen obstruction class lives in $\Hx_{\zz_2}(X^2-\delta(X))$.
 In this section, we relate $\Hx_{\zz_2}$ to $\Hx$ and apply that to compute $\Hx_{\zz_2}(X^2-\delta(X))$ for certain $X$.

Suppose $\zz_2$ is acting on some metric space $X$ by isometries and $A$ is the subset of $X$ that is fixed by the action.
We consider the following exact sequence
\[
0\rightarrow \Cx_{\zz_2}(X-A)\xrightarrow{i} \Cx(X-A) \xrightarrow{p} \Cx_{\zz_2}(X-A)\xrightarrow{r} \C(A)\rightarrow 0
\]
where $i$ is the inclusion map and $p(\phi):\gs\mapsto \phi(\gs)+\phi(g\gs)$ where $g$ is the generator of $\zz_2$ and $r$ is the restriction map.
The image of $p$ consists of those cochains in $\Cx_{\zz_2}(X-A)$ that send any simplex supported on $A$ to zero.
It is easy to see that collection of such cochains gives a subcomplex of $\Cx_{\zz_2}(X-A)$.
We denote this complex by $\Cx_{\zz_2}(X-A,A)$, and the corresponding cohomology by $\Hx_{\zz_2}(X-A,A)$.
Hence, the above four-term short exact sequence splits into the following two short exact sequences.
\begin{align*}
    &0\rightarrow \Cx_{\zz_2}(X-A)\xrightarrow{i} \Cx(X-A) \xrightarrow{p} \Cx_{\zz_2}(X-A,A)\rightarrow 0\\
    &0\rightarrow \Cx_{\zz_2}(X-A,A)\xrightarrow{i} \Cx_{\zz_2}(X-A) \xrightarrow{r} \C(A)\rightarrow 0    
    \end{align*}

 These two short exact sequences give us two long exact sequences in the cohomology which we record as our next lemma.
 \begin{Lemma}\label{singular Gysin}
Let $X$ be a metric space and $\zz_2\acts (X,A)$ such that $A$ is the fixed point set of the action. 
Then we have the following two long exact sequences
 \begin{align}
      \cdots  \rightarrow \Hx_{\zz_2}(X-A) \rightarrow \Hx(X-A) \rightarrow \Hx_{\zz_2}(X-A,A) \rightarrow \Hx[*+1]_{\zz_2}(X-A)\rightarrow \cdots \\
      \cdots  \rightarrow \Hx_{\zz_2}(X-A,A) \rightarrow \Hx_{\zz_2}(X-A) \rightarrow \H(\C(A)) \rightarrow \Hx[*+1]_{\zz_2}(X-A,A)\rightarrow \cdots \label{i:second les} \end{align}
 \end{Lemma}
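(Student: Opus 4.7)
The plan is to verify the displayed four-term exact sequence of cochain complexes, confirm that it canonically splits through $\Cx_{\zz_2}(X-A,A)$ into the two short exact sequences indicated by the author, and then invoke the standard zig-zag construction to extract the two long exact sequences.

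First I would check that the three maps $i$, $p$, $r$ are well-defined. The inclusion $i$ is tautological. For $p(\phi)(\sigma) := \phi(\sigma) + \phi(g\sigma)$, where $g$ is the nontrivial element of $\zz_2$, the output is $\zz_2$-equivariant because $g^2 = 1$, it inherits the coarse support condition $|\cdot| \ccap \Delta \csubset \Delta_A$ because $g$ acts by isometries and stabilizes $A$, and it vanishes on simplices $\sigma \in A^{*+1}$ because $g\sigma = \sigma$ forces $p(\phi)(\sigma) = 2\phi(\sigma) = 0$ in $\zz_2$-coefficients; hence $p$ lands in $\Cx_{\zz_2}(X-A,A)$. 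The restriction $r$ sending $\phi$ to $\phi|_{A^{*+1}} \in \C(A)$ is trivially well-defined.

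Next I would verify exactness. At $\Cx(X-A)$, the equation $p(\phi) = 0$ is equivalent to $\phi(\sigma) = \phi(g\sigma)$ (in $\zz_2$-coefficients), i.e., $\phi$ is $\zz_2$-equivariant, which coincides with $\image(i)$. Surjectivity of $p$ onto $\Cx_{\zz_2}(X-A,A)$ is obtained by choosing one representative from each $\zz_2$-orbit of simplices outside $A^{*+1}$, setting the preimage $\phi$ to equal the target on chosen representatives and to vanish elsewhere; because $|\phi|$ is then contained in the support of the target, the coarse complement condition transfers automatically. For $r$, the kernel equals $\Cx_{\zz_2}(X-A,A)$ by definition, and any $\alpha \in \C(A)$ lifts by extension by zero: the extension is automatically equivariant since $A$ is pointwise fixed, and its support lies in $A^\infty$, so its intersection with $N_r(\Delta)$ automatically lies in a bounded neighborhood of $\Delta_A$.

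Once the four-term sequence is confirmed exact and split into the two stated short exact sequences, the two long exact sequences of the lemma drop out from the standard zig-zag lemma in homological algebra. I anticipate the only real subtlety to be tracking the coarse support condition $|\phi| \ccap \Delta \csubset \Delta_A$ through each construction, particularly in the non-canonical choice of orbit representatives used for surjectivity of $p$; this causes no trouble because the condition depends only on the support set, which only shrinks under this construction.
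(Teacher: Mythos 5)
Your proposal is correct and follows exactly the same route as the paper: set up the four-term exact sequence $0 \to \Cx_{\zz_2}(X-A) \to \Cx(X-A) \to \Cx_{\zz_2}(X-A,A) \to \C(A) \to 0$, split it at the image of $p$, and apply the zig-zag lemma to the two resulting short exact sequences of cochain complexes. The paper asserts the exactness without proof; your verification (that $p\circ i = 0$ and $\ker p = \image i$ come from the $\zz_2$-coefficients, that surjectivity of $p$ is achieved by zeroing out one simplex per orbit, that the extension-by-zero of an $\C(A)$-cochain is automatically equivariant and satisfies the coarse support condition because $A$ is pointwise fixed) supplies exactly the omitted details and is sound.
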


 \begin{Lemma}\label{l:rel=norm}
     Suppose $A\subset X$ and $X$ is not coarsely contained in $A$.
     If $\zz_2\acts (X,A)$ where $A$ is the fixed point set of the action, then
     \[
     \Hx_{\zz_2}(X-A,A)=\begin{cases}
     0 &\text{if $*=0$}\\
      \Hx_{\zz_2}(X-A)\oplus \zz_2 &\text{if $*=1$}\\     
      \Hx_{\zz_2}(X-A) & \text{if $*\geq 2$}
    \end{cases}     
    \]
\end{Lemma}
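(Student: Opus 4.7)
The plan is to read all three cases off the second long exact sequence~\eqref{i:second les} of Lemma~\ref{singular Gysin}, after first pinning down $\H[*](\C(A))$.

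Since $A$ is unbounded (hence nonempty), pick any $a\in A$ and apply the cone operator $(x_0,\dots,x_n)\mapsto (a,x_0,\dots,x_n)$. Dualizing exactly as in the proof of Lemma~\ref{l:basic equiv complex} (now with trivial group action, so equivariance is not an issue) yields a contracting cochain homotopy on $\C(A)$ in all positive degrees. Hence $\H[n](\C(A))=0$ for $n\geq 1$, while $\H[0](\C(A))=\zz_2$ consists of the constant functions on $A$. Plugging this into~\eqref{i:second les}, for every $*\geq 2$ the two neighboring terms $\H[*-1](\C(A))$ and $\H[*](\C(A))$ vanish, so the sequence collapses to the isomorphism $\Hx[*]_{\zz_2}(X-A,A)\cong \Hx[*]_{\zz_2}(X-A)$ asserted in the third case.

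For the low-degree end I would examine the initial segment
\[
0 \to \Hx[0]_{\zz_2}(X-A,A) \to \Hx[0]_{\zz_2}(X-A) \to \H[0](\C(A)) \xrightarrow{\partial} \Hx[1]_{\zz_2}(X-A,A) \to \Hx[1]_{\zz_2}(X-A) \to 0.
\]
A $0$-cocycle of $\Cx_{\zz_2}(X-A)$ is a constant function $\phi:X\to \zz_2$ whose stabilized support satisfies $\Supp{\phi}\ccap \gD\csubset \gD_A$; since $X$ is not coarsely contained in $A$, this forces $\phi=0$, so $\Hx[0]_{\zz_2}(X-A)=0$. The same argument, together with the extra constraint $\phi|_A=0$, also gives $\Hx[0]_{\zz_2}(X-A,A)=0$, which settles the $*=0$ case and makes $\partial$ injective. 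The sequence then reduces to the short exact sequence
\[
0 \to \zz_2 \to \Hx[1]_{\zz_2}(X-A,A) \to \Hx[1]_{\zz_2}(X-A) \to 0
\]
of $\zz_2$-vector spaces, which splits automatically and yields the $*=1$ case.

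I do not expect any genuine obstacle; the proof is essentially diagram chasing. The one point worth double-checking is that $\partial$ is honestly injective rather than, say, secretly zero. This follows automatically from $\Hx[0]_{\zz_2}(X-A)=0$, but can also be verified concretely: the generator of $\H[0](\C(A))$ lifts to $\mathbf{1}_A \in \Cx[0]_{\zz_2}(X-A)$ (whose stabilized support equals $\gD_A$, hence is coarsely contained in $\gD_A$), and $[d\mathbf{1}_A]$ cannot vanish in $\Hx[1]_{\zz_2}(X-A,A)$, since a primitive would force $\mathbf{1}_A$ to differ from a global constant on $X$ by an equivariant $0$-cochain supported coarsely in $A$ and vanishing on $A$, incompatible with the assumption $X\not\csubset A$.
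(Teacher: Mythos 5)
Your proposal is correct and follows essentially the same route as the paper: both read the answer off the second long exact sequence of Lemma~\ref{singular Gysin} after observing that $\H(\C(A))$ is $\zz_2$ in degree $0$ and vanishes above, that the unboundedness/non-coarse-containment hypotheses kill $\Hx[0]_{\zz_2}(X-A)$ and $\Hx[0]_{\zz_2}(X-A,A)$, and that the resulting short exact sequence $0\to\zz_2\to\Hx[1]_{\zz_2}(X-A,A)\to\Hx[1]_{\zz_2}(X-A)\to 0$ splits automatically over $\zz_2$. Your extra verification that the connecting map is genuinely injective via the explicit lift $\mathbf{1}_A$ is a nice sanity check but not needed once $\Hx[0]_{\zz_2}(X-A)=0$ is established.
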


 \begin{proof}
  Since $X$ is not coarsely contained in $A$, we have 
  \[\Hx[0]_{\zz_2}(X-A,A)=\Hx[0]_{\zz_2}(X-A)=0.
  \]
 
 By Lemma~\ref{l:C is acyclic}, the homology of the complex $\C(A)$ vanishes everywhere except in the degree $0$ where it is $\zz_2$.
    Combining this with  the second long exact sequence of the Lemma~\ref{singular Gysin}, we obtain 
     \begin{multline}\label{e:rel=normal}
          0\rightarrow \zz_2
          \rightarrow \Hx[1]_{\zz_2}(X-A,A)\rightarrow \Hx[1]_{\zz_2}(X-A)\rightarrow 0\rightarrow  \\
         \cdots 0\rightarrow \Hx_{\zz_2}(X-A,A)\rightarrow \Hx_{\zz_2}(X-A)\rightarrow 0\cdots 
          \end{multline}
     where $*\geq 2$.

     Since the coefficient group is $\zz_2$, the first five terms give us a  split short exact sequence.
     Hence $\Hx[1]_{\zz_2}(X-A,A)=\Hx[1]_{\zz_2}(X-A)\oplus \zz_2$.
 It also follows from~\eqref{e:rel=normal} that  $\Hx(X-A,A)=\Hx(X-A)$ for all $*\geq 2$.
     
 \end{proof} 

 Hence under the hypothesis of Lemma~\ref{l:rel=norm}, we can rewrite the first long exact sequence of Lemma~\ref{singular Gysin} as follows.
 
 \begin{Lemma}[Coarse Gysin sequence]\label{coarsegysin}
 Suppose $\zz_2\acts (X,A)$ where  $A\subset X$ is the fixed point set of the action.
 Moreover, assume that $A$ is unbounded and $X$ is not coarsely contained in $A$. Then there is a long exact sequence of the following form:
 \begin{multline*}
     0\rightarrow  \Hx[0]_{\zz_2}(X-A) \rightarrow \Hx[0](X-A)\rightarrow 0\\
     \rightarrow \Hx[1]_{\zz_2}(X-A)\rightarrow \Hx[1](X-A)\rightarrow  \Hx[1]_{\zz_2}(X-A)\oplus \zz_2 \rightarrow  \cdots\\
       \rightarrow \Hx_{\zz_2}(X-A) \rightarrow \Hx(X-A) \rightarrow \Hx_{\zz_2}(X-A) \rightarrow \cdots 
  \end{multline*}

 where $*\geq 2$.
 \end{Lemma}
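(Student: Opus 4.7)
The plan is to derive the coarse Gysin sequence by plugging the identification of Lemma~\ref{l:rel=norm} into the first long exact sequence of Lemma~\ref{singular Gysin}. Concretely, Lemma~\ref{singular Gysin} provides, with no further hypothesis on $A$, the long exact sequence
\[
\cdots \to \Hx_{\zz_2}(X-A) \xrightarrow{i^{*}} \Hx(X-A) \xrightarrow{p^{*}} \Hx_{\zz_2}(X-A,A) \xrightarrow{\delta} \Hx[*+1]_{\zz_2}(X-A) \to \cdots
\]
arising from the short exact sequence of cochain complexes
\[
0 \to \Cx_{\zz_2}(X-A) \xrightarrow{i} \Cx(X-A) \xrightarrow{p} \Cx_{\zz_2}(X-A,A) \to 0.
\]

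Under the standing hypotheses of our lemma, namely that $A$ is unbounded and $X$ is not coarsely contained in $A$, Lemma~\ref{l:rel=norm} tells us that the relative group $\Hx[*]_{\zz_2}(X-A,A)$ vanishes for $*=0$, equals $\Hx[1]_{\zz_2}(X-A)\oplus \zz_2$ for $*=1$, and agrees with $\Hx[*]_{\zz_2}(X-A)$ for $*\geq 2$. Substituting these values into the above sequence term-by-term yields exactly the displayed coarse Gysin sequence: in degree $0$ the relative term is zero, so the sequence begins with $0 \to \Hx[0]_{\zz_2}(X-A) \to \Hx[0](X-A) \to 0$; in degree $1$ the relative term is $\Hx[1]_{\zz_2}(X-A)\oplus\zz_2$; and from degree $2$ onward the relative term is simply $\Hx_{\zz_2}(X-A)$, producing the repeating pattern $\Hx_{\zz_2}(X-A) \to \Hx(X-A) \to \Hx_{\zz_2}(X-A)$.

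There is essentially no obstacle here, since both ingredients are already in place; this final lemma is really just packaging. The only minor care needed is bookkeeping: the isomorphism $\Hx[1]_{\zz_2}(X-A,A) \cong \Hx[1]_{\zz_2}(X-A) \oplus \zz_2$ comes from the split short exact sequence \eqref{e:rel=normal}, so one should verify that the maps entering and exiting the term $\Hx[1]_{\zz_2}(X-A)\oplus\zz_2$ in the new sequence are the compositions with the components of this splitting. This is automatic from the naturality of the connecting homomorphisms and contains no surprises, so the proof is complete after the substitution.
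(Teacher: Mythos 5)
Your proof is correct and matches the paper's own argument exactly: the paper likewise obtains the coarse Gysin sequence by substituting the identifications of Lemma~\ref{l:rel=norm} into the first long exact sequence of Lemma~\ref{singular Gysin}. No further comment needed.
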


 We are now ready to state our main result of this section.
 \begin{Prop}\label{l:coarsegysin}
     Suppose $X$ is unbounded and $\zz_2$ is acting on $X^2$ by permuting the coordinates. Then there is a long exact sequence of the following form:
 \begin{multline}\label{e:coarsegysin}
 0\rightarrow \Hx[0]_{\zz_2}(X^2-\delta(X))\rightarrow \Hx[0](X^2-\delta(X))\rightarrow 0 \\
     \rightarrow  \Hx[1]_{\zz_2}(X^2-\delta(X))\rightarrow \Hx[1](X^2-\delta(X))\rightarrow  \Hx[1]_{\zz_2}(X^2-\delta(X))\oplus \zz_2 \rightarrow \cdots  \\
      \rightarrow \Hx_{\zz_2}(X^2-\delta(X)) \rightarrow \Hx(X^2-\delta(X)) \rightarrow \Hx_{\zz_2}(X^2-\delta(X)) \rightarrow \cdots 
\end{multline}
 
 where $*\geq 2$.
 \end{Prop}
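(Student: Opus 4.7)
The plan is simply to apply the already-established Coarse Gysin sequence of Lemma~\ref{coarsegysin} to the pair $(X^2, \delta(X))$ with the $\zz_2$-action that permutes coordinates. So the entire argument reduces to verifying the three hypotheses of Lemma~\ref{coarsegysin}: that $\delta(X)$ is exactly the fixed point set of the action, that $\delta(X)$ is unbounded, and that $X^2$ is not coarsely contained in $\delta(X)$.

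The first point is immediate: the swap $(x,y)\mapsto (y,x)$ fixes $(x,y)$ iff $x=y$. For the second, the map $x\mapsto (x,x)$ is an isometry from $X$ to $\delta(X)$ with respect to the sup metric on $X^2$, so since $X$ is unbounded, so is $\delta(X)$.

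For the third point, which is the only mildly non-formal verification, I would observe that under the sup metric on $X^2$ one has
\[
d_\infty\bigl((x,y),\delta(X)\bigr) \;=\; \inf_{z\in X}\max\bigl(d(x,z),d(y,z)\bigr) \;\geq\; \tfrac{1}{2}\,d(x,y),
\]
by the triangle inequality $d(x,y)\leq d(x,z)+d(y,z)\leq 2\max(d(x,z),d(y,z))$. Since $X$ is unbounded we may choose sequences $x_n,y_n\in X$ with $d(x_n,y_n)\to\infty$, whence the points $(x_n,y_n)\in X^2$ lie arbitrarily far from $\delta(X)$. In particular $X^2\not\csubset \delta(X)$. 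I do not expect any real obstacle here: everything is set up so that the three conditions transfer directly.

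With these verified, Lemma~\ref{coarsegysin} applied to $(X^2,\delta(X))$ produces exactly the sequence~\eqref{e:coarsegysin}, and the proof is complete. The only minor bookkeeping is to note that the $\zz_2$ summand appearing in degree $1$ comes, as in the proof of Lemma~\ref{l:rel=norm}, from the $\zz_2$-valued degree-$0$ cohomology of $\C(\delta(X))$ (this uses only that $\delta(X)$ is unbounded, which we already checked).
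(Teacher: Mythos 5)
Your proposal is correct and matches the paper's proof exactly: both simply apply Lemma~\ref{coarsegysin} to the pair $(X^2,\delta(X))$ after noting that $\delta(X)$ is the fixed set, is unbounded, and that $X^2\not\csubset\delta(X)$. Your write-up is only slightly more explicit in verifying these three hypotheses (the sup-metric estimate and the isometry $x\mapsto(x,x)$), which the paper states without elaboration.
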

\begin{proof}
    By hypothesis, $\zz_2\acts (X^2,\delta(X))$ and $\delta(X)$ is the set of fixed points of the action. Since $X$ is unbounded $X^2$ is not coarsely contained in $\delta(X)$.
    So, we can apply Lemma~\ref{coarsegysin} on $(X^2,\delta(X))$, and the claim follows.
\end{proof}
 
\begin{Remark}\label{aspecialcase}
If $X$ is unbounded and $\Hx[1](X^2-\delta(X))=0$, then it follows from Proposition~\ref{l:coarsegysin} that $\Hx[1]_{\zz_2}(X^2-\delta(X))=0$. As a consequence, we can write the beginning part of the coarse Gysin sequence \eqref{e:coarsegysin} as follows
 \begin{align}\label{coarsegysin2}
     0\rightarrow \zz_2 \rightarrow \Hx[2]_{\zz_2}(X^2-\delta(X)) \rightarrow \Hx[2](X^2-\delta(X)) \rightarrow \Hx[2]_{\zz_2}(X^2-\delta(X)) \rightarrow \cdots
 \end{align}
 This observation will be useful for us in our next theorem where we compute $\Hx_{\zz_2}(X^2-\delta(X))$ when $\Hx(X^2-\delta(X))$ is concentrated in some degree.
 \end{Remark}

 \begin{Theorem}\label{c:Conf(unif contr. space)}
      Suppose $X$ is a metric space such that for some $n\geq 1$
\[    
\Hx(X^2-\delta(X))=
\begin{cases}
\zz_2 &  *=n,\\
0 & \text{otherwise}.
\end{cases}
\]
       Moreover, suppose that $\Hx[i]_{\zz_2}(X^2-\delta(X))=0$ for some $i\geq n+1$.
      Then 
      \[\Hx_{\zz_2}(X^2-\delta(X)) = \left\{
        \begin{array}{ll}
            \zz_2 & \quad \text{if $n\geq 2$ and } 2\leq *\leq n,   \\
            0 & \quad \text{otherwise}.
        \end{array}
    \right.
\]
 \end{Theorem}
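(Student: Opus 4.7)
The plan is to determine $a_k := \Hx^k_{\zz_2}(X^2-\delta(X))$ by diagram-chasing the coarse Gysin sequence of Proposition~\ref{l:coarsegysin} against the known $b_k := \Hx^k(X^2-\delta(X))$. Since $b_n\neq 0$ in a positive degree, $X$ must be unbounded and the Gysin sequence applies; its local pattern is $\cdots\to a_k \to b_k \to r_k \to a_{k+1}\to\cdots$, where $r_0=0$, $r_1=a_1\oplus\zz_2$, and $r_k=a_k$ for $k\geq 2$. The engine of the computation is the following observation: whenever $b_k=b_{k+1}=0$, the local fragment $0\to a_k\to a_{k+1}\to 0$ extracted from the Gysin sequence forces $a_k\cong a_{k+1}$.

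First I would process the high degrees $k\geq n+1$. The engine gives $a_k\cong a_{k+1}$ all the way up, so the hypothesized vanishing $a_{k_0}=0$ at one $k_0\geq n+1$ propagates to $a_k=0$ for every $k\geq n+1$. At the low end, $a_0=0$ is immediate from $0\to a_0\to b_0=0$, and (assuming $n\geq 2$) $a_1=0$ similarly from $0\to a_1\to b_1=0$, which in turn forces $r_1=\zz_2$. The middle range, assuming $n\geq 3$, begins with $0\to\zz_2\to a_2\to b_2=0$, giving $a_2=\zz_2$; the engine then propagates $\zz_2\cong a_2\cong a_3\cong\cdots\cong a_{n-1}$. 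Finally, at $k=n$ the sequence reduces to $\zz_2\to a_n\to \zz_2\to a_n\to 0$, with the outer $\zz_2$'s coming from $a_{n-1}$ and $b_n$ and the trailing $0$ from $a_{n+1}$; exactness makes $a_n$ both a subgroup and a quotient of $\zz_2$, hence $a_n=\zz_2$. The edge cases $n=1$ and $n=2$ collapse the middle-range loop, and are disposed of by inspecting the single short segment of the Gysin sequence that contains the nontrivial $b_n$ and applying the same sub/quotient squeeze; this yields $a_\ast=0$ for $n=1$ and $a_2=\zz_2$ for $n=2$.

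The main obstacle is pure bookkeeping rather than substance: one must track the three non-generic spots in the Gysin sequence---the $\zz_2$ grafted onto $r_1$, the nontrivial $b_n$, and the externally-supplied vanishing at some $k_0\geq n+1$---and check that exactness around each spot propagates isomorphisms instead of nontrivial extensions, which it does since every group in sight is either $0$ or $\zz_2$.
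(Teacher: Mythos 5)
Your proposal is correct and follows essentially the same route as the paper's proof: feeding the known $\Hx(X^2-\delta(X))$ into the coarse Gysin sequence of Proposition~\ref{l:coarsegysin}, killing the high degrees by propagating isomorphisms, then working through the low and middle degrees, and finishing with a squeeze at degree $n$. Two small wording slips worth fixing: at degree $n$ the fragment should start with a leading $0$ (coming from $b_{n-1}=0$, which is what makes $r_{n-1}\to a_n$ injective), and the conclusion is that $\zz_2$ injects into $a_n$ while $a_n$ is a quotient of $\zz_2$ --- not that ``$a_n$ is a subgroup of $\zz_2$,'' which by itself would not rule out $a_n=0$; also, your engine $a_k\cong a_{k+1}$ uses the identification $r_k\cong a_k$, valid only for $k\geq 2$, so this should be noted (it holds wherever you apply it). The paper instead derives $a_n\neq 0$ from exactness at $b_n=\zz_2$ together with $a_{n+1}=0$, but this is a cosmetic difference from your left-side injectivity argument; both are the same diagram chase.
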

\begin{proof}
Elements of $\Hx[0](X^2-\delta(X))$ and $\Hx[0]_{\zz_2}(X^2-\delta(X))$ are constant functions on $X^2$ with support contained in a neighborhood of $\delta(X)$.
Since $\Hx[0](X^2-\delta(X))=0$, we have 
 $\Hx[0]_{\zz_2}(X^2-\delta(X))=0$.

Next, we will show that $\Hx_{\zz_2}(X^2-\delta(X))=0$ if $*\geq n+1$. Consider the following part of the coarse Gysin sequence where $*\geq 2$.
\begin{align}\label{mid} \rightarrow \Hx(X^2- \delta(X))\rightarrow \Hx_{\zz_2}(X^2-\delta(X))\rightarrow \Hx[*+1]_{\zz_2}(X^2-\delta(X)) \rightarrow \Hx[*+1](X^2- \delta(X))\rightarrow
 \end{align}
Since $\Hx(X^2-\delta(X))=0$ for $*\geq n+1$, the middle map is an isomorphism for $*\geq n+1$.  
 Hence, 
 \[\Hx[*]_{\zz_2}(X^2-\delta(X))=\Hx[n+1]_{\zz_2}(X^2-\delta(X)) \quad \text{ for all } *\geq n+1.
 \]
 That means, if  $\Hx[n+1]_{\zz_2}(X^2-\delta(X))\neq 0$ then $\Hx_{\zz_2}(X^2-\delta(X))\neq 0$ for all $*\geq n+1$. 
 By hypothesis, $\Hx[i]_{\zz_2}(X^2-\delta(X))=0$ for some $i\geq n+1$.
Therefore, $\Hx[n+1]_{\zz_2}(X^2-\delta(X))=0$ and consequently, 
\[\Hx_{\zz_2}(X^2-\delta(X))=0 \quad \text{ for all } *\geq n+1\]
We divide the rest of the calculations into three cases. 

\hfill

\textbf{Case 1 ($n=1$):}
We only have to show that $\Hx[1]_{\zz_2}(X^2-\delta(X))=0$.
By the hypothesis, $\Hx[1](X^2-\delta(X))=\zz_2$ and
$\Hx[2]_{\zz_2}(X^2-\delta(X))= 0$.
Hence, we get the following from the coarse Gysin sequence
\begin{align}\label{beginning}
    0  \rightarrow \Hx[1]_{\zz_2}(X^2-\delta(X))  \rightarrow \zz_2\rightarrow \Hx[1]_{\zz_2}(X^2-\delta(X))\oplus \zz_2 \rightarrow 0\nonumber
\end{align}
Since the third map is surjective, $\Hx[1]_{\zz_2}(X^2-\delta(X))$ cannot have more than one element and hence it is trivial.

\hfill

\textbf{Case 2 ($n=2$):} By hypothesis, $\Hx[1](X^2-\delta(X))=0$, Therefore, $\Hx[1]_{\zz_2}(X^2-\delta(X))=0$ (see remark~\ref{aspecialcase}).
Furthermore, we have $\Hx[2](X^2-\delta(X))=\zz_2$ by hypothesis and we already showed $\Hx[3]_{\zz_2}(X^2-\delta(X))=0$.
 So a part of the sequence (\ref{coarsegysin2}) takes the following form.
 \[0\rightarrow \zz_2\rightarrow \Hx[2]_{\zz_2}(X^2-\delta(X))\rightarrow \zz_2\rightarrow \Hx[2]_{\zz_2}(X^2-\delta(X))\rightarrow 0
 \]
 So there is an injective map and a surjective map from $\zz_2$ into 
 $\Hx[2]_{\zz_2}(X^2-\delta(X))$.
 It follows that  $\Hx[2]_{\zz_2}(X^2-\delta(X))=\zz_2$.

 \hfill
 
 \textbf{Case 3 ($n>2$):} In this case, $\Hx[1](X^2-\delta(X))=0$.
 The remark \ref{aspecialcase} gives us $\Hx[1]_{\zz_2}(X^2-\delta(X))=0$.

  Since $\Hx[2](X^2-\delta(X))=0$, the beginning part of the sequence (\ref{coarsegysin2}) takes the following form.
  \[0\rightarrow \zz_2\rightarrow \Hx[2]_{\zz_2}(X^2-\delta(X))\rightarrow 0\rightarrow \cdots
  \]
  Hence $\Hx[2]_{\zz_2}(X^2-\delta(X))=\zz_2$.

Since $\Hx(X^2-\delta(X))= 0$ for $* \leq n-1$,
it follows from $(\ref{mid})$ that
  \[\Hx_{\zz_2}(X^2-\delta(X))= \Hx[*+1]_{\zz_2}(X^2-\delta(X)) \quad \text{ when } \quad 2\leq * \leq n-2.
  \]
  
  That implies  \[\Hx_{\zz_2}(X^2-\delta(X))=\Hx[2]_{\zz_2}(X^2-\delta(X))=\zz_2 \text{ when } 2\leq * \leq n-1.
  \]

Finally to compute  $\Hx[n]_{\zz_2}(X^2-\delta(X))$, consider the following part of the coarse Gysin sequence (Lemma~\ref{coarsegysin}).
\[\dots \rightarrow \Hx[n]_{\zz_2}(X^2-\delta(X)) \rightarrow \Hx[n](X^2-\delta(X))\rightarrow \Hx[n]_{\zz_2}(X^2-\delta(X))\rightarrow\Hx[n+1]_{\zz_2}(X^2-\delta(X))\rightarrow\cdots 
 \]
 Since $\Hx[n](X^2-\delta(X))\neq 0$ by assumption, it follows from the above sequence that $\Hx[n]_{\zz_2}(X^2-\delta(X))\neq 0$.
 We already know that the fourth term is trivial in the above sequence. That means the third map is surjective. Since $\Hx[n](X^2-\delta(X))=\zz_2$, we can conclude that $\Hx[n]_{\zz_2}(X^2-\delta(X))=\zz_2$.
 \end{proof}

 \begin{Remark}\label{r:lower bound cobdim}
     It follows from the first part of the proof of the above theorem that  if $\Hx(X^2-\delta(X))=0$ for all $*\geq n+1$ and $\Hx_{\zz_2}(X^2-\delta(X))=0$ for some $*\geq n+1$, then $\Hx_{\zz_2}(X^2-\delta(X))=0$ for all $*\geq n+1$.
     In particular, for such $X$, $cvk^*(X)$ is trivial for $*\geq n+1$ and hence $\cobdim(X)\leq n$.
 \end{Remark} 
 \section{Upper bound of cobdim}\label{s:ub}
 
In this section, our goal is to apply Theorem~\ref{c:Conf(unif contr. space)} to estimate $\cobdim$ for proper, uniformly acyclic $n$-manifolds.
The key tool that we are going to exploit here is  a coarse Alexander duality theorem that holds for such spaces. 
 In fact, such duality holds for a more general class of metric spaces called coarse PD($n$) spaces, first introduced by 
Kapovich and Kleiner in~\cite{KK} where they  proved a coarse Alexander duality theorem for these spaces.
A different treatment of  coarse PD($n$) space and coarse Alexander duality using coarse cohomology is given in \cite{BB20}.
Let us now recall the definition of a coarse PD($n$) space from~\cite{BB20}.

\begin{Definition}\label{def pdn}
    A metric space $X$ is a \emph{coarse PD($n$) space}, if there exist chain maps $p: \C(X;\zz) \to \cx[n-*](X;\zz)$ and $q: \cx[n-*](X;\zz) \to \C(X;\zz)$, so that $pq$ and $qp$ are chain homotopic to identities via chain homotopies $G:\cx(X;\zz) \to \cx[*+1](X;\zz)$ and $F: \C(X;\zz) \to \C[*-1](X;\zz)$ which are controlled:
    \begin{align*}
        \forall \phi\in \C(X;\zz) &\qquad \Supp{ p(\phi)} \csubset \Supp{\phi}, \\
        \forall \phi\in \C(X;\zz) &\quad \Supp{ F(\phi)} \ccap \gD \csubset \Supp{\phi} , \\
        \forall c \in \cx(X;\zz) &\quad \Supp{q(c)} \ccap \gD \csubset \Supp{c}, \\
        \forall c\in \cx(X;\zz) &\qquad \Supp{ G(c)} \csubset \Supp{c}.
    \end{align*}
\end{Definition} 

\begin{Example}
    Any proper, uniformly acyclic $n$-manifold is a coarse PD($n$) space~\cite{BB20}*{Corollary 8.3}. In particular, the universal cover of a closed, aspherical $n$-manifold is a coarse PD($n$) space.
\end{Example}
  We now recall  the coarse Alexander duality theorem from~\cite{BB20}.
\begin{Theorem}[Coarse Alexander duality \cite{BB20}]\label{CAD}
 If $X$ is a coarse PD($n$) space, then for any $A\subset X$ and finitely generated abelian group $G$
 \[\Hx(X-A;G)\cong\hx[n-*](A;G).\]
\end{Theorem}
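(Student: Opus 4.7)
The plan is to promote the chain maps $p, q$ and chain homotopies $F, G$ furnished by the coarse PD$(n)$ structure to chain maps between $\Cx(X-A;\zz)$ and $\cx[n-*](A;\zz)$, and then tensor with $G$ to handle general coefficients. The whole argument reduces to checking that the support-control conditions in Definition~\ref{def pdn} interact correctly with the support condition $\Supp{\phi}\ccap\gD\csubset\gD_A$ that defines cochains on the complement.

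First I would verify the restrictions. Given $\phi\in\Cx[k](X-A;\zz)$, the element $p(\phi)\in\cx[n-k](X;\zz)$ satisfies $\Supp{p(\phi)}\csubset\Supp{\phi}$ by hypothesis, while membership in $\cx[n-k](X;\zz)$ gives $\Supp{p(\phi)}\csubset\gD$. Combining these two coarse containments and then applying $\Supp{\phi}\ccap\gD\csubset\gD_A$ yields $\Supp{p(\phi)}\csubset\gD_A$, so $p$ restricts to $\bar p\colon\Cx[k](X-A;\zz)\to\cx[n-k](A;\zz)$. Symmetrically, for $c\in\cx[n-k](A;\zz)$, the control $\Supp{q(c)}\ccap\gD\csubset\Supp{c}\csubset\gD_A$ shows $q(c)\in\Cx[k](X-A;\zz)$, so $q$ restricts to $\bar q$. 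The chain homotopy $G$ restricts immediately via $\Supp{G(c)}\csubset\Supp{c}$. For $F$, the control is stated only as $\Supp{F(\phi)}\ccap\gD\csubset\Supp{\phi}$, so for each scale $r$ one extracts an $s$ with $N_r(\Supp{F(\phi)})\cap N_r(\gD)\subset[s]\Supp{\phi}$, then enlarges $r$ so as to re-intersect with $N_r(\gD)$ and invoke $\Supp{\phi}\ccap\gD\csubset\gD_A$; this two-step chase gives $\Supp{F(\phi)}\ccap\gD\csubset\gD_A$, hence $F(\phi)\in\Cx(X-A;\zz)$.

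Once the restrictions $\bar p,\bar q,\bar F,\bar G$ are in place, the identities $pq\simeq\mathrm{id}$ and $qp\simeq\mathrm{id}$ pass verbatim to the restricted maps, yielding a chain homotopy equivalence
\[
\Cx(X-A;\zz)\ \simeq\ \cx[n-*](A;\zz),
\]
and therefore the integral case $\Hx(X-A;\zz)\cong\hx[n-*](A;\zz)$. For a finitely generated abelian group $G$, I would tensor both sides of the chain equivalence with $G$ over $\zz$: the cochain complex $\Cx(X-A;G)$ is identified with $\Cx(X-A;\zz)\otimes_{\zz} G$ using that the support conditions are independent of coefficients, and similarly for $\cx[n-*](A;G)$; since $\bar p$ and $\bar q$ are chain homotopy inverses over $\zz$, their tensor products with $G$ remain chain homotopy inverses, giving the stated isomorphism.

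The main obstacle I anticipate is the bookkeeping in step one, specifically for the homotopy $F$: the support condition on $F$ is only a ``coarse intersection'' bound, whereas $G$ enjoys the stronger ``coarse containment'' bound, so extracting the required uniform control constants requires chasing $\csubset$ and $\ccap$ through a couple of layers rather than a single substitution. Everything else --- the chain-map identities, the passage to cohomology, and the universal-coefficient step --- is formal once the four restrictions are justified.
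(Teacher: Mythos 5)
The paper does not prove this theorem itself; it is quoted from \cite{BB20}, so I am evaluating your proposal on its own merits.

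There is a genuine gap in the very first restriction step, and it concerns $p$. You argue that since $\Supp{p(\phi)}\csubset\gD_A$, the chain $p(\phi)$ lands in $\cx[n-k](A;\zz)$. By the definitions used in this paper, $\cx[n-k](A;\zz)$ consists of locally finite chains \emph{on $A$}: formal sums of points of $A^{n-k+1}$ with coarsely controlled stabilised support. But $p(\phi)$ is a chain on $X$; its simplices have vertices in $X$, merely lying in a bounded neighbourhood of $\gD_A$. Having support coarsely contained in $\gD_A$ does not make it a chain on $A$. What your argument actually produces is a chain homotopy equivalence between $\Cx(X-A;\zz)$ and the auxiliary complex of locally finite chains on $X$ whose stabilised support is coarsely contained in $\gD_A$. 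To reach $\hx[n-*](A;\zz)$ you still need that the inclusion of $\cx(A;\zz)$ into this larger complex is a quasi-isomorphism. That is a genuine coarse-excision lemma: one must build a coarse projection from a metric neighbourhood of $A$ onto $A$, check that it induces a chain map on the relevant complexes of locally finite, coarsely supported chains, and construct a prism-type chain homotopy (again respecting local finiteness and coarse support) between the projection followed by the inclusion and the identity. None of this is automatic, and without it the duality is established for the wrong chain complex.

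Two smaller remarks. Your checks for $q$, for $F$, and for the chain homotopy labelled $G$ in Definition~\ref{def pdn} are correct; in particular the two-step chase for $F$, passing first through $\Supp{F(\phi)}\ccap\gD\csubset\Supp{\phi}$ and then through $\Supp{\phi}\ccap\gD\csubset\gD_A$, does yield $\Supp{F(\phi)}\ccap\gD\csubset\gD_A$ as you claim. The closing ``tensor with $G$'' step is not a mere substitution: $\C(X;\zz)$ is an infinite product, so $\C(X;\zz)\otimes_{\zz}G$ is not tautologically $\C(X;G)$, and tensoring need not preserve the subcomplex inclusions defining $\Cx(X-A;\cdot)$ and $\cx(\cdot)$ when $G$ has torsion. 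The finite-generation hypothesis on $G$ is precisely what makes this step work, so it deserves at least a line of justification rather than being waved through.
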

As a consequence we have the following.
\begin{Lemma}\label{concentrated}
    Let $X$ be a coarse PD($n$) space. Then 
\[\hx(X;\zz_2)=
\begin{cases}
\zz_2 & *=n,\\
0 & \text{otherwise}.
\end{cases}
\]
    \end{Lemma}

    \begin{proof}
 By Theorem~\ref{CAD}, we have $\hx[n-*](X;G)=\Hx(X-X;G)$.
Observe that $\Cx(X-X;\zz_2)=\C(X;\zz_2)$. Since $\C(X;\zz_2)$ is acyclic by Lemma~\ref{l:C is acyclic}, we obtain
\[\hx(X;\zz_2)=\Hx[n-*](X-X;\zz_2)=
\begin{cases}
\zz_2 & *=n,\\
0 & \text{otherwise}.
\end{cases}
\]
\end{proof}

 For the rest of the paper, the omitted coefficient will mean $\zz_2$.
\begin{Lemma}\label{ordered conf for pD(n)}
If $X$ is a coarse PD($n$) space, then 

\[    
\Hx(X^2-\delta(X))=
\begin{cases}
\zz_2 &  *=n, \\
0 & \text{otherwise}.
\end{cases}
\]
\end{Lemma}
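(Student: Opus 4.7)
The plan is to reduce the statement to two ingredients: (a) a product formula showing that the square of a coarse PD($n$) space is a coarse PD($2n$) space, and (b) the coarse Alexander duality of Theorem~\ref{CAD}. Once these are in hand, the computation is immediate from the coarse homology of $X$ itself.

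First I would verify that $X^2$, equipped with the sup metric, is a coarse PD($2n$) space. The chain maps $p_X, q_X$ and chain homotopies $F_X, G_X$ given by Definition~\ref{def pdn} for $X$ should be combined via a tensor/product construction to produce analogous data $p_{X^2}, q_{X^2}, F_{X^2}, G_{X^2}$ on $X^2$. Concretely, if $\gs=(\gs_1,\gs_2)\in (X^2)^{*+1}$ is viewed as a pair of simplices in $X$ (after suitable shuffling), one sets $p_{X^2}:=p_X\otimes p_X$ and similarly for $q_{X^2}$, and the chain homotopies are built from $F_X, G_X$ using the standard Eilenberg--Zilber style formula. The support control conditions in Definition~\ref{def pdn} are preserved by the sup metric: if each factor's output has support coarsely controlled by the input's, then so does the product, because the sup metric on $X^2$ is precisely the coordinatewise maximum. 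This is the step I expect to be the main technical obstacle; the formal verification that $\Supp{p_{X^2}(\phi)}\csubset \Supp{\phi}$, and similarly for the other three maps, requires care but no essentially new idea beyond the product structure of the sup metric.

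Next I would apply Theorem~\ref{CAD} to the pair $(X^2,\delta(X))$ using the coarse PD($2n$) structure on $X^2$, yielding
\[
\Hx(X^2-\delta(X);\zz_2) \cong \hx[2n-*](\delta(X);\zz_2).
\]
Since $d((x,x),(y,y))=d(x,y)$ in the sup metric on $X^2$, the map $x\mapsto (x,x)$ is an isometry $X\to \delta(X)$; hence $\hx[2n-*](\delta(X))\cong \hx[2n-*](X)$.

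Finally, by the computation recorded in the paragraph immediately preceding Lemma~\ref{ordered conf for pD(n)}, for any coarse PD($n$) space $X$ one has $\hx[k](X;\zz_2)=\zz_2$ if $k=n$ and $0$ otherwise. Therefore $\hx[2n-*](X;\zz_2)$ is $\zz_2$ exactly when $2n-*=n$, i.e.\ when $*=n$, and vanishes otherwise, giving precisely the claimed formula for $\Hx(X^2-\delta(X))$.
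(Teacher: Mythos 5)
Your proof is correct and follows essentially the same route as the paper's: observe that $X^2$ is a coarse PD($2n$) space, apply coarse Alexander duality (Theorem~\ref{CAD}) to the pair $(X^2,\delta(X))$, identify $\delta(X)$ with $X$ isometrically, and read off the answer from the coarse homology of a coarse PD($n$) space. The paper also leaves the "product of coarse PD spaces is coarse PD" step unjustified, so you are at the same level of rigor there; the Eilenberg--Zilber-style sketch you give for it is a reasonable account of why one expects it to hold under the sup metric.
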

\begin{proof}
If $X$ is a coarse PD($n$) space,
then $X^2$ is a coarse PD($2n$) space. 
We obtain
\begin{align*}
    \Hx(X^2 -\delta(X))&= \hx[2n-*](\delta(X)) &&\text{by Theorem~\ref{CAD}}\\
    &=\begin{cases}
\zz_2 & * = n,\\
0 & \text{otherwise}.
\end{cases}
&&\text{by Lemma~\ref{concentrated}}
\end{align*}

\end{proof}

\begin{Lemma}\label{l:equiv is zero for large degree}
    If $X$ is a proper, uniformly acyclic $n$-manifold where $n\geq 1$, then $\Hx[i]_{\zz_2}(X^2-\delta(X))=0$ for $i\geq 2n+2$.
\end{Lemma}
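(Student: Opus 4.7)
The strategy is to apply Theorem~\ref{c:equicoa=coaquo} to the pair $(X^{2},\delta(X))$ under the $\zz_{2}$-action by coordinate swap, and then exploit the fact that the quotient $(X^{2}-N_{r}(\delta(X)))/\zz_{2}$ is a topological $2n$-manifold whose Alexander--Spanier cohomology vanishes above degree $2n$.

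First, if $X$ is bounded, then $X^{2}-\delta(X)$ is bounded, so $\Hx[i]_{\zz_{2}}(X^{2}-\delta(X))=0$ for all $i\geq 1$ and the lemma is trivial; accordingly we assume $X$ is unbounded, in which case $X^{2}$ is not coarsely contained in $\delta(X)$. I then verify the hypotheses of Theorem~\ref{c:equicoa=coaquo}(\ref{i:equicoa=coaquo2}): the $\zz_{2}$-action on $X^{2}$ has fixed set $\delta(X)\neq\emptyset$, so $\Fix(X^{2})\neq\emptyset$; since $X$ is uniformly acyclic the product $X^{2}$ is uniformly acyclic (via a Künneth/box argument on metric neighborhoods), which in particular gives uniform acyclicity away from $\delta(X)$; and since $X^{2}$ is a topological $2n$-manifold it is locally contractible hence locally acyclic everywhere, so a fortiori locally acyclic away from $\delta(X)$.

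Applying Theorem~\ref{c:equicoa=coaquo}(\ref{i:equicoa=coaquo2}) then yields, for every $i\geq 1$,
\[
\Hx[i]_{\zz_{2}}(X^{2}-\delta(X))=\varinjlim_{r}\rH[i-1]\!\bigl((X^{2}-N_{r}(\delta(X)))/\zz_{2}\bigr).
\]
The key observation is now that $X^{2}-N_{r}(\delta(X))$ is an open subset of the topological $2n$-manifold $X^{2}$, and the $\zz_{2}$-action on it is free because a neighborhood of the entire fixed set has been removed. Consequently $(X^{2}-N_{r}(\delta(X)))/\zz_{2}$ is a topological $2n$-manifold (possibly with boundary). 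The Alexander--Spanier cohomology of any such manifold vanishes in degrees $>2n$. Hence $\rH[i-1]((X^{2}-N_{r}(\delta(X)))/\zz_{2})=0$ whenever $i-1>2n$, i.e.\ $i\geq 2n+2$, and passing to the direct limit over $r$ gives $\Hx[i]_{\zz_{2}}(X^{2}-\delta(X))=0$ in this range.

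The main technical wrinkle is verifying the uniform and local acyclicity of $X^{2}$ away from $\delta(X)$; this is where the manifold and uniform acyclicity assumptions on $X$ are both essential, the former guaranteeing local acyclicity (via local Euclideanness) and the latter surviving the passage to $X^{2}$ because bounded sets in the sup metric sit inside products of bounded sets in $X$. Once these hypotheses are in hand, Theorem~\ref{c:equicoa=coaquo} reduces the question to a classical cohomological-dimension bound for topological manifolds, which immediately yields the claimed vanishing.
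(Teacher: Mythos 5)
Your argument is essentially identical to the paper's: handle the bounded case via part~\eqref{i:equicoa=coaquo1} of Theorem~\ref{c:equicoa=coaquo}, then for unbounded $X$ verify that $X^2$ is uniformly and locally acyclic (away from $\delta(X)$) with nonempty fixed set, invoke part~\eqref{i:equicoa=coaquo2} to express $\Hx[i]_{\zz_2}(X^2-\delta(X))$ as $\varinjlim\rH[i-1]((X^2-N_r(\delta(X)))/\zz_2)$, and conclude by noting this quotient is a $2n$-manifold so its reduced cohomology vanishes above degree $2n$. You supply slightly more explicit justification (the freeness of the action, the Künneth-style argument for uniform acyclicity of the product), but the route and all key steps match the paper's proof.
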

\begin{proof}
If $X$ is bounded, then $X^2\ceq \delta(X)$ and the claim follows from Theorem~\ref{c:equicoa=coaquo}\eqref{i:equicoa=coaquo1}.
So, we assume that $X$ is unbounded.
This implies that $X^2$ is not coarsely contained in $\delta(X)$.
By hypothesis, $X^2$ is uniformly acyclic and locally acyclic.
Hence we can apply Corollary~\ref{c:equicoa=coaquo} to the pair $(X^2,\delta(X))$ and obtain
\[\Hx_{\zz_2}(X^2-\delta(X))=\varinjlim \rH[*-1]((X^2-N_r(\delta(X))/\zz_2).\]
Since $(X^2-N_r(\delta(X)))/\zz_2$ is a $2n$-manifold, $\Hx_{\zz_2}(X^2-\delta(X))=0$ for all $*\geq 2n+2$.
\end{proof}

\begin{Theorem}\label{w/o bdry}
    If $X$ is a proper, uniformly acyclic $n$-manifold, then $\cobdim(X)\leq n$. 
\end{Theorem}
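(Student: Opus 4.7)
The plan is to combine the two preceding lemmas with Remark~\ref{r:lower bound cobdim}, which was designed precisely for this kind of conclusion. So the proof should be quite short.

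First, I would note that any proper, uniformly acyclic $n$-manifold is a coarse PD($n$) space by the example cited from~\cite{BB20}. Hence Lemma~\ref{ordered conf for pD(n)} applies to $X$ and gives
\[
\Hx(X^2-\delta(X))=\begin{cases}\zz_2 & *=n,\\ 0 & \text{otherwise,}\end{cases}
\]
which in particular tells us that $\Hx(X^2-\delta(X))=0$ for all $*\geq n+1$.

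Second, I would invoke Lemma~\ref{l:equiv is zero for large degree} to get $\Hx[i]_{\zz_2}(X^2-\delta(X))=0$ for all $i\geq 2n+2$. Since $2n+2\geq n+1$, this furnishes at least one degree $*\geq n+1$ in which the equivariant coarse cohomology vanishes.

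Finally, both hypotheses of Remark~\ref{r:lower bound cobdim} are now met, so the remark yields $cvk^*(X)=0$ for all $*\geq n+1$ and therefore $\cobdim(X)\leq n$. There is no real obstacle here since the two lemmas do all the heavy lifting; the only thing to check is the compatibility of their hypotheses, namely that $X$ proper and uniformly acyclic is simultaneously coarse PD($n$) (for Lemma~\ref{ordered conf for pD(n)}) and has $X^2$ both uniformly acyclic and locally acyclic (for Lemma~\ref{l:equiv is zero for large degree}), both of which are immediate from the manifold hypothesis.
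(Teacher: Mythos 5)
Your proof is correct and follows essentially the same route as the paper: Lemma~\ref{ordered conf for pD(n)} pins down $\Hx(X^2-\delta(X))$, Lemma~\ref{l:equiv is zero for large degree} supplies vanishing of the equivariant cohomology in one high degree, and Remark~\ref{r:lower bound cobdim} closes the argument. The paper formally routes through Theorem~\ref{c:Conf(unif contr. space)}, but its own follow-up remark observes that only Remark~\ref{r:lower bound cobdim} is needed, which is exactly what you do. The only small omission is the case $n=0$: Lemma~\ref{l:equiv is zero for large degree} is stated for $n\geq 1$, so you should dispose of $n=0$ separately (a proper uniformly acyclic $0$-manifold is bounded, so $\cobdim(X)=0$ by definition), as the paper does in its first sentence.
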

\begin{proof}
    For $n=0$, the claim is trivial.
    For $n\geq1$, Lemma~\ref{ordered conf for pD(n)} and Lemma~\ref{l:equiv is zero for large degree} implies that $X$ satisfies the hypothesis of Theorem~\ref{c:Conf(unif contr. space)}. 
    Hence by Theorem~\ref{c:Conf(unif contr. space)}, we have $\Hx_{\zz_2}(X^2-\delta(X))=0$ for $*\geq n+1$.
    This implies $\cobdim(X)\leq n$.
\end{proof}

\begin{Remark}
    Note that to prove Theorem~\ref{w/o bdry}, we did not need the full strength of the Theorem~\ref{c:Conf(unif contr. space)}.
    We needed to show $\Hx_{\zz_2}(X^2-\delta(X))=0$ for $*\geq n+1$ which only requires vanishing of $\Hx(X^2-\delta(X))$ for $*\geq n+1$ and vanishing of $\Hx_{\zz_2}(X^2-\delta(X))$ for some $*\geq n+1$ (see remark~\ref{r:lower bound cobdim}).
\end{Remark}
Our next goal is to improve Theorem~\ref{w/o bdry} to include manifolds with boundaries. 
For that, we need to impose a condition on the metric of the boundary. 
This is the purpose of the following definition which is inspired by the uniformly locally $k$-connected space defined in~\cite{DFW}.

\begin{Definition}\label{unlcontr}
  A metric space $(X,d)$ is uniformly locally acyclic, if
for every  $\epsilon >0$, there is a $\delta > 0$ such that any ball of radius $\delta$ is acyclic inside a ball of radius $\epsilon$.
\end{Definition}

\begin{Example}
    Any compact, locally acyclic space is uniformly locally acyclic. Similarly, any locally acyclic space that admits a cocompact group action by homeomorphisms, is uniformly locally acyclic.
    In particular, universal cover of a compact manifold is uniformly locally acyclic.

Any uniformly locally acyclic space is also locally acyclic. However, the converse is not true. For example, the set $\{\frac{1}{n}\}$ with the subspace metric from $\rr$ is locally acyclic, but it is not uniformly locally acyclic.
\end{Example}

\begin{Lemma}\label{l:local contr gives unif contr}
Let $(X,d)$ be a uniformly locally 
acyclic metric space. Then the space $X\times [1,\infty)$ has a metric that makes the space uniformly acyclic away from $X\times \{1\}$ and the map $x\mapsto (x,1)$ is an isometric embedding of $X$ into $X\times [1,\infty)$.
\end{Lemma}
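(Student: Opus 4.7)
The plan is to equip $Y := X \times [1, \infty)$ with a warped-product length metric in which the $X$-fiber is progressively stretched with height, so that any bounded set sitting far above $X \times \{1\}$ automatically projects to a small-diameter set in $X$ where the uniform local acyclicity of $X$ can be applied and then lifted back to $Y$ via the contractibility of the $[1,\infty)$-factor.

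Concretely, I would take $f(t) := t$ and define $d$ on $Y$ as the length metric generated by piecewise horizontal and vertical edges, assigning length $t \cdot d_X(x_1, x_2)$ to a horizontal edge from $(x_1,t)$ to $(x_2,t)$ and length $|t_1 - t_2|$ to a vertical edge from $(x,t_1)$ to $(x,t_2)$. The infimum-over-paths construction yields a genuine metric (positivity follows because $f \geq 1$ and all $t_i \geq 1$, so any nontrivial path has positive length), and the isometric embedding at $t = 1$ holds because the horizontal edge at height $1$ realises $d_X(x_1,x_2)$, while any competing piecewise path has total horizontal contribution $\sum t_i \, d_{X,i} \geq \sum d_{X,i} \geq d_X(x_1,x_2)$ by the triangle inequality in $X$.

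For uniform acyclicity away from $X \times \{1\}$, let $\delta := \delta(1) \leq 1$ come from the uniform local acyclicity at scale $\epsilon = 1$, and set $\mu(r) := r/\delta$. Given $B \subset Y$ with $\diam(B) \leq r$ (in the metric $d$) and $d(B, X\times\{1\}) \geq \mu(r)$, the direct vertical ray from $(x,t)$ to $(x,1)$ shows every point of $B$ has $t$-coordinate $\geq \mu(r) + 1$. Optimising over the lowest height $s$ reached by a piecewise path yields the explicit formula
\[ d(p,q) = \inf_{s \in [1,\, \min(t_p,t_q)]}\bigl((t_p - s) + (t_q - s) + s \cdot d_X(x_p,x_q)\bigr), \]
from which $d_X(x_p,x_q) \leq r/(\mu(r)+1) \leq \delta$ for all $p,q \in B$. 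Hence $\pi_X(B)$ lies in an $X$-ball $U$ of radius $\delta$ centred at a point of $\pi_X(B)$, and by hypothesis $U$ is null-homologous in the $X$-ball $W$ of radius $1$ around the same centre. Writing $t_0$ for the midheight of $B$, the product hull $W \times [t_0 - r, t_0 + r]$ is contained in $Y$, contains $B$, has the homotopy type of $W$, and so the inclusion $B \hookrightarrow W \times [t_0 - r, t_0 + r]$ factors (on homology) through $H_*(U) \to H_*(W) = 0$ and is therefore null. A direct length-metric estimate bounds the $d$-distance from any point of this hull to $B$ by a multiple of $r$, so one can take $\rho(r) := C r$ for an appropriate constant $C$ depending on $\delta$.

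The main obstacle will be the bookkeeping: verifying the exact path formula for $d(p,q)$ by splitting into the regimes $d_X(x_p,x_q) \leq 2$ (the infimum is attained at $s = \min(t_p, t_q)$, so that horizontal travel happens at the lower of the two heights) and $d_X(x_p,x_q) > 2$ (the infimum is attained at $s = 1$, which is ruled out by the diameter-height hypothesis), checking that $t_0 - r \geq 1$ under the chosen $\mu$, and most delicately pinning down $\rho$, because the horizontal cost at height $\sim t_0 \sim r/\delta$ amplifies $X$-distances by that same factor, forcing $\rho(r)$ to be linear in $r$ with slope roughly $1/\delta$.
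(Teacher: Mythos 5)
The warped-product framework (piecewise horizontal/vertical length metric, horizontal cost scaled by a height-dependent factor, isometric embedding at $t=1$) is the same one the paper uses, and your formula for the distance and the estimate that $\pi_X(B)$ has $X$-diameter $<\delta$ when $d(B,X\times\{1\})\geq \mu(r)$ are both fine. But there is a genuine gap in the uniform-acyclicity step, and it comes precisely from your choice of a fixed linear warping $\phi(t)=t$. Consider a set $B$ with $\diam(B)\leq r$ sitting at height roughly $T$ where $T$ is \emph{much} larger than $\mu(r)=r/\delta$; the definition of ``uniformly acyclic away from $X\times\{1\}$'' requires you to fill $B$ inside $N_{\rho(r)}(B)$ for a function $\rho$ depending on $r$ alone, and $T$ is not controlled by $r$. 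Your filling lives in $W\times[t_0-r,t_0+r]$, where $W$ is an $X$-ball of radius $1$; at height around $T$ the horizontal cost of traversing $W$ is on the order of $T\cdot 2$, so the hull is \emph{not} contained in $N_{Cr}(B)$ for any constant $C$ --- the asserted ``direct length-metric estimate'' fails. Pushing the filling down to a lower height does not rescue this for linear $\phi$: if the local-acyclicity modulus satisfies, say, $\delta(\epsilon)\sim\epsilon^2$, the smallest $X$-ball one may fill into has radius $\sim\sqrt{r/T}$, and the optimal combined descend-and-fill cost is still $\sim T\cdot\sqrt{r/T}=\sqrt{rT}\to\infty$ as $T\to\infty$.

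The point you are missing is that the stretching factor must be \emph{coordinated with the local-acyclicity data of $X$}, not chosen in advance. This is exactly what the paper does: it extracts a decreasing sequence $\{r_i\}$ with $r_1=1$ so that $B^d_{r_i}(x)\hookrightarrow B^d_{r_{i-1}}(x)$ is always null, and then sets $\phi(t)=1/r_t$. With that choice, a $\rho$-ball of bounded radius at height $i$ projects to an $X$-ball of radius comparable to $r_{i-k}$, and after descending a bounded number of levels the filling $X$-ball $B^d_{r_{i-k-1}}(x)$ again has \emph{bounded} $\rho$-size, because the stretching at the lower level is exactly $1/r_{i-k-1}$. The local acyclicity thus enters not through a single scale $\epsilon=1$ but through the whole sequence of scales, built into the metric itself. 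Using $\phi(t)=t$ only gives the desired conclusion when $X$ has linear local acyclicity (i.e.\ $\delta(\epsilon)\gtrsim\epsilon$), which is a genuinely stronger hypothesis than what the lemma assumes.
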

\begin{proof}
 The construction of the metric follows the one appearing in Lemma 2.2 of~\cite{DFW}.
Choose a continuous strictly increasing function $\phi: [1,\infty) \rightarrow [1,\infty)$ with
$\phi(1) = 1$. Let $d$ be the original metric on $X$ and define a function $\rho'$ by
\begin{enumerate}
    \item $\rho'((x, t),(x', t)) = \phi(t)d(x, x')$.
\item $\rho'((x, t),(x, t')) = |t-t'|$.
\end{enumerate}
We then define $\rho : (X \times [1,\infty))^2 \rightarrow [0,\infty)$ to be
\[\rho((x, t),(x', t')) = \inf \sum_{i=1}^{l}\rho'((x_i, t_i),(x_{i-1}, t_{i-1}))\]
where the sum is over all chains
\[(x, t)=(x_0, t_0),(x_1, t_1),\ldots,(x_l, t_l)=(x', t')\]
and each segment is either horizontal or vertical. 
Also $\phi(1)=1$ implies that $X\times\{1\}$ with the subspace metric is isometric to $X$ via the map $(x,1)\mapsto x$.
Now we will describe a $\phi$, so that the corresponding metric $\rho$ makes $X\times [1,\infty)$ uniformly acyclic away from $X\times \{1\}$.
Since $X$ is uniformly locally acyclic, we have an infinite positive decreasing sequence $\{r_i\}$ with $r_1=1$ such that for every $x\in X$, the inclusions $\dots\subset B_{r_{i}}^d(x)\subset B_{r_{i-1}}^d(x)\subset$ are nullhomotopic maps.
Set $\phi(t)=\frac{1}{r_{t}}$ for $t\in \nn$.
For nonintegral values of $t$, we set
\[\phi(t)=\phi([t])+(t-[t])\phi([t]+1)
\]
Suppose, $N_i=\frac{\phi(i)}{\phi(i-1)}$.
Now we consider the ball $B^\rho_k(x,i)\subset X\times [1,\infty)$.
Note that $B^\rho_k(x,i)\subset B^d_{\frac{k}{N_{i-k}}}(x)\times[i-k,i+k]$ and that $B^\rho_k(x,i)$ contracts in itself to $B^\rho_k(x,i)\cap (X\times[i-k,i])\subset B^d_{\frac{k}{N_{i-k}}}(x)\times [i-k,i]$.
Also, $B^d_{\frac{k}{N_{i-k-1}}}(x)\times \{i-k-1\}\subset B_{k+2}^\rho(x,i)$.
So, $B^\rho_k(x,i)$ can be contracted inside $B_{k+2}^\rho(x,i)$ by pushing it down to $(i-k-1)$-level and contracting it there.
\end{proof}

The following gluing lemma in a slightly different form can be found in \cite{BH}.
\begin{Lemma}[\cite{BH}, Lemma I.5.24]\label{gluing}
Let $X_1$ and $X_2$ be two proper metric spaces. Let $A_i\subset X_i$ be the closed subsets and $f:A_1\rightarrow A_2$ be an isometry. Let $Y$ be the space obtained by gluing $(X_i,A_i)$ along $A_i$ via the map $f$. 
Define $d:Y\times Y\rightarrow \rr$ as follows
\[d(x,y)=\begin{cases}
d_i(x,y) & \text{if $x,y\in X_i$}\\
\inf_{a\in A_1}\{d_1(x,a)+d_2(f(a),y)\} & \text{if $x\in X_1, y\in X_2$}.
\end{cases}
\]
Then, 
\begin{enumerate}
    \item $d$ is a proper metric on $Y$.
    \item The canonical inclusions $X_i\hookrightarrow Y$ are isometric embedding.

\end{enumerate}
\end{Lemma}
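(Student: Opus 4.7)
The plan is to verify the metric axioms in the order symmetry, vanishing on the diagonal, positivity, triangle inequality, then deduce the isometric embedding statement, and finally check properness. Throughout, I extend the defining formula symmetrically so that $d(y,x)=d(x,y)$ for $x\in X_1$, $y\in X_2$, and identify each $a\in A_1$ with $f(a)\in A_2$ as a single point of $Y$.

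For the metric axioms, symmetry is built in. The identity $d(p,p)=0$ is immediate when $p$ lies in a single $X_i$; when $p$ lies in the gluing locus, represented both as $p\in A_1$ and $f(p)\in A_2$, plugging $a=p$ into the infimum gives $d_1(p,p)+d_2(f(p),f(p))=0$. Positive definiteness has only one nontrivial case: $x\in X_1$, $y\in X_2$, with $d(x,y)=0$. A minimizing sequence $a_n\in A_1$ satisfies $d_1(x,a_n)\to 0$ and $d_2(f(a_n),y)\to 0$, so by closedness of $A_1,A_2$ and continuity of $f$ we conclude $x\in A_1$ and $y=f(x)$, i.e.\ $x=y$ in $Y$. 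For the triangle inequality I would split into cases by which $X_i$ each of $x,y,z$ lies in; the representative case $x,z\in X_1$, $y\in X_2$ reduces to observing that for any $a,b\in A_1$,
\[d_1(x,z)\leq d_1(x,a)+d_1(a,b)+d_1(b,z)=d_1(x,a)+d_2(f(a),f(b))+d_1(b,z)\leq d_1(x,a)+d_2(f(a),y)+d_2(y,f(b))+d_1(b,z),\]
after which taking infima over $a,b$ yields $d(x,z)\leq d(x,y)+d(y,z)$. The remaining cases are analogous, inserting an intermediate point in $A_1$ (or $A_2$) and using the isometry of $f$ to bridge distances.

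The isometric embedding $d|_{X_i\times X_i}=d_i$ is the very definition, and consistency for points of $A_1$ identified with their images in $A_2$ is automatic since $f$ is an isometry. For properness, fix a closed ball $\overline{B^Y_R(x)}$ with $x\in X_1$, without loss of generality. Its intersection with $X_1$ lies in the closed $d_1$-ball of radius $R$ around $x$, which is compact by properness of $X_1$. For $y\in \overline{B^Y_R(x)}\cap X_2$ and any $\epsilon>0$, the infimum yields some $a\in A_1$ with $d_1(x,a)\leq R+\epsilon$ and $d_2(f(a),y)\leq R+\epsilon$; hence $y$ lies within distance $R+\epsilon$ in $X_2$ of the compact set $K=f(\overline{B^{X_1}_R(x)}\cap A_1)$. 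So $\overline{B^Y_R(x)}\cap X_2$ is bounded in $X_2$, hence contained in a compact set by properness of $X_2$. Since $\overline{B^Y_R(x)}$ is the image under the continuous quotient map $X_1\sqcup X_2\to Y$ of a compact set, it is compact.

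The main obstacle is the triangle inequality, purely due to the number of cases; all other steps reduce to direct applications of the definition and the isometry of $f$.
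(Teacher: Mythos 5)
The paper does not prove this lemma; it simply cites it (as Lemma I.5.24 in Bridson--Haefliger), so there is no in-paper argument to compare against. Your direct verification of the metric axioms, isometric embedding, and properness is essentially correct and is the standard argument one would give. Two small points worth tightening. First, well-definedness of $d$ on $Y\times Y$ deserves an explicit check before the metric axioms: a point $p\in A_1$ is identified with $f(p)\in A_2$, so e.g.\ for $y\in X_1$ the formula can be read either as $d_1(y,p)$ or as $\inf_{a\in A_1}\{d_1(y,a)+d_2(f(a),f(p))\}$; using the isometry of $f$ and the triangle inequality one sees both equal $d_1(y,p)$, but this should be stated, not just asserted as ``automatic.'' Second, in the properness argument the minimizer $a$ only satisfies $d_1(x,a)\le R+\epsilon$, so the compact set should be $K=f(\overline{B^{X_1}_{R+\epsilon}(x)}\cap A_1)$ rather than $f(\overline{B^{X_1}_{R}(x)}\cap A_1)$; this is harmless since $\overline{B^{X_1}_{R+\epsilon}(x)}\cap A_1$ is still compact (closed subset of a compact ball), but the radius should match the bound actually obtained. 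With these adjustments your proof is complete.
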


\begin{Prop}\label{p:w boundary embed in without boundary}
Any uniformly acyclic, proper $n$-manifold with uniformly locally acyclic boundary admits an isometric embedding into  a uniformly acyclic, proper $n$-manifold. 
\end{Prop}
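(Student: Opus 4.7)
The plan is to build $Y$ by attaching a collar to $X$ along $\partial X$ using Lemmas~\ref{l:local contr gives unif contr} and~\ref{gluing}, and then to check that the resulting open manifold inherits uniform acyclicity from $X$ and from the collar. Concretely, I will equip $C := \partial X \times [1,\infty)$ with the metric $\rho$ from Lemma~\ref{l:local contr gives unif contr} (so that $C$ is uniformly acyclic away from $\partial X \times \{1\}$, and $\partial X \times \{1\}$ is isometric to $\partial X$) and apply Lemma~\ref{gluing} with $X_1 = X$, $X_2 = C$, $A_1 = \partial X$, $A_2 = \partial X \times \{1\}$, and identification $f(x) = (x,1)$. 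By Lemma~\ref{gluing}, the glued space $Y$ is a proper metric space containing $X$ as an isometric subspace; and since topologically $Y$ is $X$ with a half-open collar attached along its boundary, $Y$ is a proper $n$-manifold without boundary.

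The real work is to verify uniform acyclicity of $Y$. I plan to organize the argument around the retraction $r : Y \to X$ defined by the identity on $X$ and $(x,t) \mapsto x$ on $C$, together with the straight-line homotopy $H$ from $\mathrm{id}_Y$ to $r$ that pushes each $(x,t) \in C$ down the collar ray to $(x,1)$. Using the description of $\rho$ from Lemma~\ref{l:local contr gives unif contr} (any path in $C$ must descend to level $1$ before moving horizontally, since $\phi \geq 1$) together with the gluing formula, one checks directly that $r$ is $1$-Lipschitz and that $H$ displaces each $(x,t)$ by exactly $t - 1 = d((x,t),X)$.

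Given a bounded set $B \subset Y$ of diameter at most $D$ and a reduced singular cycle $c$ supported in $B$, let $\mu_C,\rho_C$ be control functions witnessing uniform acyclicity of $C$ away from $A := \partial X \times \{1\}$, arranged so that $\mu_C(D) > 0$, and let $\rho_X$ be the control function for $X$. I will split into two cases depending on whether $d(B,A) < \mu_C(D)$ or $d(B,A) \geq \mu_C(D)$. In the first case some point of $B$ lies within $\mu_C(D)$ of $A$, so the diameter bound forces $t - 1 \leq \mu_C(D) + D$ for every $(x,t) \in B \cap C$; the prism from $H$ then has boundary $c - r_{*}(c)$ and is supported in $N_{\mu_C(D)+D}(B)$, while uniform acyclicity of $X$ applied to $r(B)$ (of diameter $\leq D$ since $r$ is $1$-Lipschitz) fills $r_{*}(c)$ in $N_{\rho_X(D)}(r(B))$, jointly filling $c$ inside $N_{\rho_X(D)+\mu_C(D)+D}(B)$. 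In the second case $B$ avoids $N_{\mu_C(D)}(A)$, so $B \cap \partial X = \emptyset$ and the pieces $B \cap X$, $B \cap C$ are separated by positive distance in $Y$; the cycle splits as $c = c_X + c_C$ and is filled component-wise by uniform acyclicity of $X$ and of $C$ away from $A$. The combined control function is $\rho_Y(D) = \max\{\rho_X(D)+\mu_C(D)+D,\ \rho_C(D)\}$.

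The hard step is this uniform-acyclicity verification, and the main obstacle inside it is that the homotopy $H$ is only controlled when $B$ does not reach deep into the collar. The case split is engineered precisely to address this: whenever $B$ does reach deep into $C$, it is automatically far enough from $\partial X$ that uniform acyclicity of $C$ away from $A$ takes over and $H$ is never run over a large portion of the collar; conversely, whenever $B$ sits near $A$, the diameter bound confines $B$ to a slab of controlled height so that $H$ stays bounded. Assembling the two cases gives uniform acyclicity of $Y$ and completes the construction.
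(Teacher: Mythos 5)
Your construction and overall strategy coincide with the paper's: equip $C = \partial X \times [1,\infty)$ with the metric from Lemma~\ref{l:local contr gives unif contr}, glue via Lemma~\ref{gluing}, and verify uniform acyclicity of $Y$ by a near-the-seam versus far-from-the-seam case split. The paper runs the verification on metric balls $B_r(x)$, casing on where the center $x$ sits, and in the ``near $\partial M$'' case slides the collar piece down and then fills in $M$; your retraction $r$ and homotopy $H$ are a clean repackaging of exactly that step. So this is essentially the same route, not a different one.

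There is, however, a genuine gap in your Case 2. You claim that when $d(B,A)\ge\mu_C(D)$ the cycle splits as $c = c_X + c_C$ and is filled component-wise. That is fine in degrees $\ge 1$, but it fails in degree $0$: if $B$ meets both $X\setminus N_{\mu_C(D)}(A)$ and $C\setminus N_{\mu_C(D)}(A)$, a reduced $0$-cycle supported in $B$ does not split into two reduced $0$-cycles, and filling the two halves in disjoint neighborhoods does not show that $B$ lands in a single path-component of $N_{\rho_Y(D)}(B)$, which is what uniform acyclicity demands in degree $0$. Your assumption ``arranged so that $\mu_C(D)>0$'' is too weak to rule this configuration out. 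The fix is easy and you should make it explicit: replace $\mu_C$ by $\max\{\mu_C(D), D\}$ (one may always enlarge the control function in ``uniformly acyclic away from $A$''). Then if $B$ met both pieces one would have $\diam(B)\ge 2\mu_C(D) > D$, a contradiction; so in Case 2 the set $B$ lies entirely in $X$ or entirely in $C$, the splitting is trivial, and the degree-$0$ issue disappears. With that change, your proof matches the paper's.
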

\begin{proof}
Since $\d M$ is uniformly locally acyclic, Lemma \ref{l:local contr gives unif contr} allows us to equip $\partial M \times [1,\infty)$ with a metric so that it is uniformly acyclic away from $\partial M\times \{1\}$.
Let $\rho,\mu:\rr_+\rightarrow \rr_+$ be two functions such that any ball $B(x,r)$ in  $\partial M\times [1,\infty)$ is acyclic inside $B(x,\rho(r))$ whenever $d(x,\partial M \times \{1\})\geq \mu(r)$.
We glue $\partial M \times [1,\infty)$ to $M$ along $\d M$ by the attaching map $(x,1)\mapsto x$.
Let $Y$ be  the resulting  space.
By Lemma~\ref{gluing}, there is a proper metric on $Y$ such that the canonical maps $M\hookrightarrow Y$ and $\partial M\times [1,\infty)\hookrightarrow Y$ are isometric embedding.
For the rest of the proof we will regard $M$ and $\partial M\times [1,\infty)$ as subspaces of $Y$.

We claim that $Y$ is uniformly acyclic.
Since $M$ is uniformly acyclic, there exists a function $\tau:[0,\infty)\rightarrow [0,\infty)$ such that, for any $r\geq 0$, any ball of radius $r$ in $M$ is acyclic inside a concentric ball of radius $\tau(r)$.
Take a ball $B_r(x)$ of radius $r$ in $Y$.
If $x\in M$ and $d(x,\partial M)\geq r$, then $B_r(x)$ is contained inside $M$ because points in $\partial M\times [1,\infty)$ are at least as far from $x$ as points in $\partial M$ by the construction of the metric on $Y$. Hence, $B_r(x)$ is acyclic inside $B_{\tau(r)}(x)$.
If $x\in \partial M\times [1,\infty)$ and $d(x, \partial M)\geq \mu(r)+r$, then $B(x,r)\subset \partial M\times [1,\infty)$ and hence is acyclic in $B(x,\rho(r))$.
If $d(x,\d M)\leq \mu(r)+r$, we can deformation retract $B_r(x)\cap (\partial M\times [1,\infty))$ inside $\d M\times [1,\infty)$ by sliding it along $[1,\infty)$ until it lands on $\partial M\times \{1\}$.
Note that this deformation takes place inside $\d M\times [1,\mu(r)+2r]$, and the diameter shrinks as one approaches $\d M\times \{1\}$.
Hence the deformation takes place in a set of diameter at most  $\mu(r)+2r+2r$.
Also, the deformed ball is now contained in $M$ and has diameter at most $2r$ and hence it is acyclic inside a set of diameter at most $\tau(2r)$ by uniform acyclicity of $M$.
Hence, we conclude that $B_r(x)$ is acyclic inside a set of diameter at most $\mu(r)+4r+\tau(2r)$.
Hence $Y$ is uniformly acyclic.
\end{proof}

As a consequence of the above proposition, we get the following.

\begin{Theorem}\label{cobdim of manifold}
If $X$ is a proper, uniformly acyclic $n$-manifold with uniformly locally acyclic boundary, then $\cobdim(X)\leq n$.
\end{Theorem}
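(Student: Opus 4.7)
The plan is to combine the two preceding results: Proposition~\ref{p:w boundary embed in without boundary} produces an isometric embedding of $X$ into a proper, uniformly acyclic $n$-manifold (without boundary), and Theorem~\ref{w/o bdry} controls the coarse obstruction dimension of such target spaces. The monotonicity of $\cobdim$ under coarse expanding maps (Theorem~\ref{t:cobdim increases}) then transfers the bound back to $X$.

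More precisely, I would argue as follows. First, apply Proposition~\ref{p:w boundary embed in without boundary} to obtain a proper, uniformly acyclic $n$-manifold $Y$ (where the collar construction $\d M \times [1,\infty)$ has been glued on, so $Y$ has empty boundary) together with an isometric embedding $j: X \hookrightarrow Y$. Next, observe that an isometric embedding is in particular a coarse embedding, with both control functions equal to the identity, and as noted in the paragraph following the definition of coarse expanding map, every coarse embedding is a coarse expanding map. Therefore Theorem~\ref{t:cobdim increases} gives $\cobdim(X) \leq \cobdim(Y)$. Finally, Theorem~\ref{w/o bdry} applied to $Y$ yields $\cobdim(Y) \leq n$, and combining the inequalities finishes the proof.

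There is essentially no obstacle here, since the substantive work has been done in Proposition~\ref{p:w boundary embed in without boundary} (where the boundary is absorbed into an open collar while preserving uniform acyclicity and properness) and in Theorem~\ref{w/o bdry} (where coarse Alexander duality together with the coarse Gysin sequence produces the bound for manifolds without boundary). The only minor point worth flagging in the write-up is to confirm that the space $Y$ produced by the collar construction is indeed a manifold without boundary, so that Theorem~\ref{w/o bdry} applies directly; this is immediate from the local picture near $\d M$, where $Y$ looks like $\d M \times (-\varepsilon, \infty)$.
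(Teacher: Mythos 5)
Your proposal is correct and follows exactly the same route as the paper: apply Proposition~\ref{p:w boundary embed in without boundary} to embed $X$ isometrically into a boundaryless proper uniformly acyclic $n$-manifold $Y$, invoke Theorem~\ref{t:cobdim increases} to get $\cobdim(X)\leq\cobdim(Y)$, and conclude with Theorem~\ref{w/o bdry}. The extra remark confirming that $Y$ has empty boundary is a reasonable, if minor, clarification.
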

\begin{proof}
By Proposition~\ref{p:w boundary embed in without boundary}, we have a uniformly acyclic proper $n$-manifold $Y$ such that $X$ embeds isometrically in $Y$.
By Theorem~\ref{t:cobdim increases}, it follows that $\cobdim(X)\leq \cobdim(Y)$.
By Corollary~\ref{w/o bdry}, we know $\cobdim(Y)\leq n$ and hence $\cobdim(X)\leq n$.
\end{proof}
Now we can state the following improvement of Corollary~\ref{c:obstruction rn}. The proof is immediate from Theorem~\ref{t:cobdim increases} and Theorem~\ref{cobdim of manifold}.

\begin{Corollary}\label{c:cobdim geq emb}
     If $\cobdim(X)\geq n$, then $X$ cannot be coarsely embedded into a proper, uniformly  acyclic $(n-1)$-manifold with a uniformly locally acyclic boundary.
\end{Corollary}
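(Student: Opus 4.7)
The plan is to prove this by contraposition: assume there exists a coarse embedding $f\colon X \hookrightarrow Y$ into a proper, uniformly acyclic $(n-1)$-manifold $Y$ with uniformly locally acyclic boundary, and derive that $\cobdim(X) \leq n-1$, contradicting the assumption $\cobdim(X) \geq n$. Since the two essential inputs are already in hand, this will really just be a chaining together of two previously established facts.

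First I would recall that, as noted in Section~\ref{s:coarse configuration} immediately after the definition of a coarse expanding map, every coarse embedding $f\colon X \to Y$ is in particular a coarse expanding map: the upper control $\rho_+$ gives the needed $\mu$ for the induced map $(X^2,\delta(X)) \to (Y^2,\delta(Y))$ to be relatively coarse, and the lower control $\rho_-$ shows that preimages of metric neighborhoods of $\delta(Y)$ are coarsely contained in $\delta(X)$, giving relative properness. Thus Theorem~\ref{t:cobdim increases} applies and yields
\[
\cobdim(X) \leq \cobdim(Y).
\]

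Next I would invoke Theorem~\ref{cobdim of manifold} applied to $Y$: since $Y$ is assumed to be a proper, uniformly acyclic $(n-1)$-manifold with uniformly locally acyclic boundary, we have $\cobdim(Y) \leq n-1$. Combining these two inequalities gives $\cobdim(X) \leq n-1$, contradicting the hypothesis $\cobdim(X) \geq n$. Hence no such coarse embedding exists.

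There is no real obstacle here; the corollary is a formal consequence of the two cited theorems, and the only tiny point worth being explicit about is the passage from \emph{coarse embedding} to \emph{coarse expanding map}, so that Theorem~\ref{t:cobdim increases} is genuinely applicable in its stated generality.
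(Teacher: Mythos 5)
Your proof is correct and matches the paper's own (unwritten-out) argument exactly: the paper simply remarks that the corollary is immediate from Theorem~\ref{t:cobdim increases} and Theorem~\ref{cobdim of manifold}, which is precisely the chain you spell out. The extra sentence you include on why a coarse embedding is a coarse expanding map is a fair thing to make explicit, but it is the same route.
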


\begin{Definition}[Cocompact action dimension]
  The $\textit{cocompact action dimension}$ $\cadim(G)$ of a group $G$ is the least dimension of a contractible manifold (possibly with boundary)  that admits a proper cocompact $G$-action.
 \end{Definition}
 
 \begin{Corollary}\label{cadim>cobdim}
      $\cadim(G)\geq \cobdim(G)$.
 \end{Corollary}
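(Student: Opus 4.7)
The plan is to reduce Corollary~\ref{cadim>cobdim} to Theorem~\ref{cobdim of manifold} together with the monotonicity of $\cobdim$ under coarse embeddings (Theorem~\ref{t:cobdim increases}). Suppose $\cadim(G)=n$, witnessed by a proper cocompact $G$-action on a contractible $n$-manifold $M$ (possibly with boundary). First I would fix a $G$-invariant proper geodesic metric on $M$; such a metric exists by the standard Milnor--\v{S}varc construction (e.g.\ via a $G$-equivariant Riemannian metric or the Abels--Struble averaging trick). Once $G$ acts by isometries, the orbit map $g \mapsto g\cdot x_0$ for any basepoint $x_0 \in M$ is a coarse embedding of $G$ (with a word metric) into $M$; this is the content of the Milnor--\v{S}varc lemma.

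Next I would verify that $M$ satisfies the hypotheses of Theorem~\ref{cobdim of manifold}, namely that $M$ is a proper uniformly acyclic $n$-manifold with uniformly locally acyclic boundary. Properness is immediate from the proper $G$-action. For uniform acyclicity of $M$, contractibility of $M$ gives acyclicity, and cocompactness upgrades this to uniformity: given a bounded set $B\subset M$ of diameter at most $D$, one can translate $B$ by some $g\in G$ so that $gB$ lies in a bounded metric neighborhood of a compact fundamental domain $K$, and then fill $gB$ inside $M$. The diameter of the filling depends only on $D$ and $K$, not on $B$, because any two fillings can be compared by translating back; this gives the uniform control function $\rho$.

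For uniform local acyclicity of $\partial M$, note that $\partial M$ is closed and $G$-invariant, so $G$ acts properly cocompactly on $\partial M$ by isometries (intersecting a compact fundamental domain of $M$ with $\partial M$ gives a compact fundamental domain for $\partial M$). Since $\partial M$ is an $(n-1)$-manifold without boundary, it is locally Euclidean and hence locally acyclic: for each point $x\in \partial M$ and each $\epsilon>0$ there is a $\delta(x,\epsilon)>0$ making the required inclusion trivial on homology. Cocompactness then lets us replace this pointwise $\delta$ by a uniform $\delta(\epsilon)$ by taking the minimum over a compact fundamental domain and using $G$-invariance.

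Having established these hypotheses, Theorem~\ref{cobdim of manifold} yields $\cobdim(M)\leq n$, and Theorem~\ref{t:cobdim increases} applied to the coarse embedding $G\hookrightarrow M$ yields $\cobdim(G)\leq \cobdim(M)\leq n=\cadim(G)$. The main subtlety I expect is the uniform local acyclicity of $\partial M$ when $M$ is only assumed to be a topological manifold with boundary; if one is worried about pathological collaring behavior, one can instead invoke the fact that a cocompactly acted upon topological manifold admits a $G$-invariant handle decomposition (or just note that local Euclideanness suffices for the $(n-1)$-dimensional $\partial M$), but a smooth or PL setup makes the argument transparent.
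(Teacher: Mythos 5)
Your proof is correct and follows essentially the same route as the paper: the Milnor--Schwarz lemma gives a coarse embedding $G\to M$, cocompactness upgrades acyclicity of $M$ (resp.\ local acyclicity of $\partial M$) to the uniform versions required, and then Theorem~\ref{cobdim of manifold} together with Theorem~\ref{t:cobdim increases} yields the bound (the paper simply invokes Corollary~\ref{c:cobdim geq emb}, which packages exactly those two theorems). The extra detail you supply on extracting the uniform control functions from cocompactness is asserted without proof in the paper but is the standard argument, so there is no gap.
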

 \begin{proof}
 Suppose $\cadim(G)=n$.
 Then $G$ admits  a proper, cocompact action on an acyclic $n$-manifold $M$.
 Choose a point $x_0\in M$.
 By Milnor--Schwarz Lemma, the map $g\mapsto g.x_0$ gives a coarse equivalence $f:G\rightarrow M$.
 Since $M$ is acyclic and it admits a cocompact action, $M$ is uniformly acyclic.
 Similarly, since $\partial M$ is locally acyclic and it admits a cocompact action, it is uniformly locally acyclic.
 Since $M$ is proper,
 by Corollary~\ref{c:cobdim geq emb}, we get $\cobdim(G)\leq n=\cadim(G)$.
 
 \end{proof}

 \begin{Remark}
     The action dimension $\actdim(G)$ of a group $G$ is the least dimension of a contractible manifold  that admits a proper $G$-action.
     Note that $\cadim(G)\geq \actdim(G)$, however, we do not know of  any groups where $\cadim>\actdim$.
     Nonetheless, we believe that $ \cobdim(G)$ gives a lower bound to $\actdim(G)$.
     A naive approach to prove this might be as follows:  
     Suppose, $G$ admits a proper action on a contractible $n$-manifold $M$.
    Then the map $f:G\rightarrow M$, sending $G$ to one of its orbit gives a coarse embedding.
    Furthermore, the image of $f$ is uniformly contractible in $M$: 
    for any $r$ there exists $s$ such that any ball of radius $r$ in $M$ centered at a point in $f(G)$ is uniformly contractible inside a concentric ball of radius $s$. 
    This should imply that there is a bounded function $p:f(G)\rightarrow (0,\infty)$ such that the space $X=\cup_{x\in f(G)} N_{p(x)}(x)$ is a uniformly contractible manifold with boundary. 
     Since $G$ acts on $X$ cocompactly, $X$ has uniformly locally acyclic boundary.
    Theorem~\ref{cobdim of manifold} then implies that $\cobdim(X)\leq n$.
    Since $G$ coarsely embed into $X$ by the map $f$, we get $\cobdim(G)\leq \cobdim(X)\leq n$, proving the desired claim.
    However, the author does not know how to prove that there exists such $X$.
    \end{Remark}
 \begin{Question}
     Is $\cobdim(G)\geq \actdim(G)$?
 \end{Question}
 \section{Lower bound of cobdim}\label{s:lb}
\begin{Theorem}\label{t:cobdim of unifcontr}
 If $\Hx(X^2-\delta(X))=0$ for $*\leq n-1$, then $\cobdim(X)\geq n$.
\end{Theorem}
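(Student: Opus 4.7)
The plan is to exhibit the coarse van Kampen class $cvk^n(X)$ as a nonzero element of $\Hx[n]_{\zz_2}(X^2-\delta(X))$, which by Definition~\ref{cobdim def} yields $\cobdim(X)\geq n$. When $n=1$ the hypothesis $\Hx[0](X^2-\delta(X))=0$ forces $X$ to be unbounded (a nonzero constant cochain would otherwise represent a class in $\Hx[0]$), so $\cobdim(X)\geq 1$ by definition. Assume $n\geq 2$ throughout.

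The main tool is the naturality of the coarse Gysin sequence (Proposition~\ref{l:coarsegysin}) with respect to coarse expanding maps. Any coarse expanding map $g:X\to Y$ induces the relatively coarse map of pairs $(g,g):(X^2,\delta(X))\to (Y^2,\delta(Y))$, and hence a morphism of the two short exact sequences of cochain complexes underlying the Gysin sequence. I pick $g:X\to \ell^\infty$ coarse expanding (which exists since $X$ is separable), so that $cvk^n(X)=g^*(e^n)$ by definition.

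On the $\ell^\infty$ side, one first observes that $\Hx((\ell^\infty)^2-\delta(\ell^\infty))=0$ for $*\geq 1$, since the ordered deleted product deformation retracts to the contractible space $S^\infty$ (via the trivial-action version of Theorem~\ref{c:equicoa=coaquo}). The Gysin sequence then makes $\partial$ an isomorphism in all degrees $\geq 2$, so combined with Remark~\ref{aspecialcase} we have $e^n=\partial^{n-2}(e^2)$, where $e^2$ is the image of the canonical generator of $\zz_2\hookrightarrow \Hx[2]_{\zz_2}((\ell^\infty)^2-\delta(\ell^\infty))$. On the $X$ side, the hypothesis $\Hx(X^2-\delta(X))=0$ for $*\leq n-1$ makes the connecting map $\partial:\Hx_{\zz_2}(X^2-\delta(X))\to \Hx[*+1]_{\zz_2}(X^2-\delta(X))$ injective in degrees $2\leq *\leq n-1$ (its kernel is the image of the vanishing $\Hx(X^2-\delta(X))$). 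Remark~\ref{aspecialcase} again provides an injection $\zz_2\hookrightarrow \Hx[2]_{\zz_2}(X^2-\delta(X))$.

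To finish, I would show $g^*(e^2)\neq 0$ and then use naturality. Tracing through the second short exact sequence of Lemma~\ref{singular Gysin}, the canonical $\zz_2$ at the start of the Gysin sequence originates from $\H[0](\C(\delta(Y)))$, i.e., the constant $\zz_2$-valued function $1$ on $\delta(Y)$. Pullback along $(g,g)$ takes constant $1$ to constant $1$, so naturality carries the canonical $\zz_2$-generator on the $\ell^\infty$-side to the canonical $\zz_2$-generator on the $X$-side, forcing $g^*(e^2)\neq 0$. Naturality of $\partial$ then gives
\[
cvk^n(X)=g^*(e^n)=\partial^{n-2}(g^*(e^2)),
\]
which is nonzero by the injectivity established above. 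The main technical work is verifying the naturality of the two short exact sequences of the Gysin setup under $(g,g)$ and correctly identifying the canonical $\zz_2$-generators; once done, the conclusion is a diagram chase on the resulting ladder of Gysin sequences.
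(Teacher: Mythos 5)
Your proposal is correct and follows essentially the same route as the paper: both use the coarse Gysin sequence and naturality under the map induced by a coarse expanding map $g:X\to\ell^\infty$, the vanishing hypothesis to make the connecting maps injective up through degree $n-1$, and a low-degree argument to show $g^*(e^2)\neq 0$. The only stylistic difference is that you trace the $\zz_2$-generator directly from $\H[0](\C(\delta(\cdot)))$ to verify $g^*(e^2)\neq 0$, whereas the paper packages the same fact via a five-lemma argument on the second long exact sequence of Lemma~\ref{singular Gysin}.
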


\begin{proof}
For $n=0$, the claim is trivial. If $n=1$, the assumption says $\Hx[0](X^2-\delta(X))=0$. This means $X$ is unbounded, otherwise, non zero constant functions from $X^2$ to the $\zz_2$ give nontrivial elements in $\Hx[0](X^2-\delta(X))$. Hence, $\cobdim(X)\geq 1$ in this case.

Suppose $n\geq 2$ and $f:X\rightarrow \ell^\infty$ is an isometry. 
We first show that $f^*:\Hx[2]_{\zz_2}(\ell^\infty)^2-\delta(\ell^\infty))\xrightarrow{f^*} \Hx[2]_{\zz_2}(X^2-\delta(X)) $ is a nontrivial map.
To see that, consider the following part of the maps between the concerned coarse Gysin sequences. 
Our goal is to show that the second vertical map is nontrivial.

\[\begin{tikzcd}[column sep= small]
\arrow{r} \Hx[1]((\ell^\infty)^2-\delta(\ell^\infty))\arrow{r} & \Hx[1]_{\zz_2}((\ell^\infty)^2-\delta(\ell^\infty))\oplus \zz_2 \arrow{d}{f^*} \arrow{r} &  \Hx[2]_{\zz_2}((\ell^\infty)^2-\delta(\ell^\infty))\arrow{d}{f^*} \arrow{r}  & {}\dots \\
  \arrow{r} \Hx[1](X^2-\delta(X)) \arrow{r} & \Hx[1]_{\zz_2}(X^2-\delta(X))\oplus \zz_2 \arrow{r}{j} & \Hx[2]_{\zz_2}(X^2-\delta(X))  \arrow{r} & {}  \ldots
\end{tikzcd}
\]

By the commutativity of the diagram, our claim follows if we can show that $j$ is injective and the first vertical map is nontrivial.

Since $\Hx[1](X^2-\delta(X))=0$, it follows from the long exact sequence (\ref{coarsegysin2}) that $j$ is injective. 

Next, we show that the first vertical map in the above commutative diagram is non trivial.
It is equivalent to showing that the following map is nontrivial.
\[f^*: \Hx[1]_{\zz_2}((\ell^\infty)^2-\delta(\ell^\infty),\delta(\ell^\infty))\rightarrow  \Hx[1]_{\zz_2}((X^2-\delta(X),\delta(X))\]
We can show that using the following  diagram.
\[\begin{tikzcd}[column sep= small]
 & 0 \arrow{d}{f^*}\arrow{r} & \H[0](\C(\delta(\ell^\infty))\arrow{d}{f^*} \arrow{r} & \Hx[1]_{\zz_2}((\ell^\infty)^2-\delta(\ell^\infty),\delta(\ell^\infty)) \arrow{d}{f^*} \arrow{r} &  \Hx[1]((\ell^\infty)^2-\delta(\ell^\infty))\arrow{d}{f^*} \arrow{r}  & 0 \arrow{d}{f^*} \\
  & 0 \arrow{r} &\H[0](\C(\delta(X))) \arrow{r}  & \Hx[1]_{\zz_2}(X^2-\delta(X),\delta(X)) \arrow{r}{j} & \Hx[1](X^2-\delta(X))  \arrow{r} & 0
\end{tikzcd}
\]
The long exact sequence is due to the second long exact sequence of Lemma~\ref{singular Gysin} combined with the fact that $\Hx[1]((\ell^\infty)^2-\delta(\ell^\infty)), \H[1](\C(\delta(\ell^\infty))), \Hx[1](X^2-\delta(X))$ and $\H[1](\C(\delta(X)))$ all are trivial.
The second vertical map is an isomorphism as both the domain and the range are constant functions defined on the respective spaces.
The fourth vertical map is an isomorphism because both the domain and the range are trivial.
So, applying the Five lemma on the above diagram, we conclude that the middle vertical map is an isomorphism, as desired.

Hence we obtain that the map $f^*:\Hx[2]_{\zz_2}(\ell^\infty)^2-\delta(\ell^\infty))\rightarrow \Hx[2]_{\zz_2}(X^2-\delta(X)) $ is nontrivial.

Let us now consider the maps between the following parts of the  coarse Gysin sequences where $*\geq 2$.
\[\begin{tikzcd}[column sep= small]
\arrow[r]& \Hx((\ell^\infty)^2-\delta(\ell^\infty))\arrow{d}{f^*}\arrow{r} & \Hx_{\zz_2}(\ell^\infty)^2-\delta(\ell^\infty)) \arrow{d}{f^*} \arrow{r} &  \Hx[*+1]_{\zz_2}((\ell^\infty)^2-\delta(\ell^\infty))\arrow{d}{f^*} \arrow{r} & {} \\ 
\arrow[r] & \Hx(X^2-\delta(X))  \arrow{r} &  \Hx_{\zz_2}(X^2-\delta(X))  \arrow{r} & \Hx[*+1]_{\zz_2}(X^2-\delta(X))\arrow{r} & {} 
\end{tikzcd}
\]

Note that the first terms of both sequences above are trivial for $*\leq n-1$.
That implies that the third horizontal maps in the above diagram are injective. Hence by commutativity of the diagram, if the second vertical map is nontrivial then so is the third vertical map when $*\leq n-1$.
We saw previously that, when $*=2$ the second vertical map is injective. 
It now follows by induction that the third vertical map is injective when $*\leq n-1$.
In particular, when $*=n-1$, injectivity of the third vertical map means $cvk^n(X)\neq 0$.
Hence, $\cobdim(X)\geq n$.

\end{proof}
As a consequence we get the following, which also follows from the work of Yoon~\cite{Yoon}*{Lemma 5.3}.
\begin{Corollary}\label{lb of cob}
      If $X$ is a coarse $PD(n)$ space, then $X$ does not coarsely embed into a proper, uniformly acyclic $(n-1)$-manifold with a uniformly locally acyclic boundary.
\end{Corollary}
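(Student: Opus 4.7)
The plan is to chain together three results already established in the paper. First, I would observe that if $X$ is a coarse $PD(n)$ space, then Lemma~\ref{ordered conf for pD(n)} tells us that $\Hx(X^2-\delta(X))$ is concentrated in degree $n$, so in particular $\Hx[i](X^2-\delta(X))=0$ for all $i\leq n-1$. This puts $X$ squarely into the hypothesis of Theorem~\ref{t:cobdim of unifcontr}, yielding the lower bound $\cobdim(X)\geq n$.

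Next, I would invoke Corollary~\ref{c:cobdim geq emb}, which is precisely the statement that any space with $\cobdim \geq n$ fails to admit a coarse embedding into a proper, uniformly acyclic $(n-1)$-manifold with uniformly locally acyclic boundary. Combining the lower bound from the previous step with this corollary gives the conclusion immediately.

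There is essentially no hard step here, since all the heavy lifting is done by Lemma~\ref{ordered conf for pD(n)} (the coarse Alexander duality computation for $X^2$ as a coarse $PD(2n)$ space), Theorem~\ref{t:cobdim of unifcontr} (the inductive argument through the coarse Gysin sequence showing $cvk^n(X)\neq 0$), and Corollary~\ref{c:cobdim geq emb} (which itself is a packaging of Theorem~\ref{t:cobdim increases} with Theorem~\ref{cobdim of manifold}). The only thing to double-check is that ``coarse embedding'' is indeed a special case of ``coarse expanding map,'' which is noted in the paper right after the definition of coarse expanding map, so Theorem~\ref{t:cobdim increases} does apply in the contrapositive form used by Corollary~\ref{c:cobdim geq emb}. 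The entire argument is thus a two-line corollary: apply Lemma~\ref{ordered conf for pD(n)} and Theorem~\ref{t:cobdim of unifcontr} to get $\cobdim(X)\geq n$, then quote Corollary~\ref{c:cobdim geq emb}.
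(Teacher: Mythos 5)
Your proposal is correct and matches the paper's own proof exactly: the paper likewise observes that a coarse $PD(n)$ space satisfies the hypotheses of Theorem~\ref{t:cobdim of unifcontr} (via Lemma~\ref{ordered conf for pD(n)}) to get $\cobdim(X)\geq n$, then concludes by Corollary~\ref{c:cobdim geq emb}. Your additional remarks about which earlier results do the real work and the check that a coarse embedding is a coarse expanding map are accurate but not needed for the argument.
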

\begin{proof}
If $X$ is a coarse $PD(n)$ space, then it satisfies the hypotheses of Theorem~\ref{t:cobdim of unifcontr}. Hence, $\cobdim(X)\geq n$.
The claim now follows from Corollary~\ref{c:cobdim geq emb}.
\end{proof}

\begin{Example}\label{cobdimof manifold}
If $X$ is a proper, uniformly acyclic $n$-manifold, then it satisfies all the hypotheses of the above corollary.
Hence $\cobdim(X)\geq n$. Theorem \ref{cobdim of manifold} implies that  $\cobdim(X)\leq n$. Hence $\cobdim(X)=n$ whenever $X$ is a proper, uniformly acyclic $n$-manifold.
\end{Example}
\begin{bibdiv}
\begin{biblist}

\bib{BB20}{article}{
title={Coarse cohomology of the complement}, 
      author={Banerjee, Arka},
      author={Okun, Boris},
      year={2023},
      volume={2308.13965},
      journal={arXiv},
}

\bib{B24}{article}{
      title={On the computation of coarse cohomology}, 
      author={Banerjee, Arka},
      year={2024},
      volume={2401.02231},
      journal={arXiv},
}

\bib{BKK}{article}{author={Bestvina, M.}, author={ Kapovich, M.}, author={Kleiner, B.},title={ Van Kampen’s embedding obstruction for discrete groups}, journal={ Invent. math. 150, 219–235 (2002)}, url={https://doi.org/10.1007/s00222-002-0246-7}
}
\bib{BH}{book}{
author = {Bridson, Martin},
author= {Haefliger, Andre},
year = {2009},
pages = {},
title = {Metric Spaces of Non-Positive Curvature},
volume = {319},
isbn = {978-3-642-08399-0},
journal = {Fundamental Principles of Mathematical Sciences},
doi = {10.1007/978-3-662-12494-9}
}	

\bib{Bunke}{article}{
   title={Equivariant coarse homotopy theory and coarse
                    algebraic K-homology},
   ISSN={1098-3627},
   url={http://dx.doi.org/10.1090/conm/749/15068},
   DOI={10.1090/conm/749/15068},
   journal={K-theory in Algebra, Analysis and Topology, Contemp. Math.},
   publisher={American Mathematical
                    Society},
   author={Bunke, Ulrich},
   author={Engel, Alexander},
   author={Kasprowski, Daniel},
   author={Winges, Christoph},
   year={2020},
   pages={13–104} 
   }

\bib{DFW}{article}{
 author = {A. N. Dranishnikov},
 author= {Steven C. Ferry},
 author= {Shmuel Weinberger},
 journal = {Annals of Mathematics},
 number = {3},
 pages = {919--938},
 publisher = {Annals of Mathematics},
 title = {Large Riemannian Manifolds Which Are Flexible},
 volume = {157},
 year = {2003}
}
\bib{Frechet}{article}{
    AUTHOR = {Fr\'{e}chet, Maurice},
     TITLE = {Les dimensions d'un ensemble abstrait},
   JOURNAL = {Math. Ann.},
    VOLUME = {68},
      YEAR = {1910},
    NUMBER = {2},
     PAGES = {145--168},
      ISSN = {0025-5831},
       DOI = {10.1007/BF01474158},
       URL = {https://doi.org/10.1007/BF01474158},
}
		
\bib{h10}{thesis}{
    author = {Hair, Steven},
    title = {Homological methods in coarse geometry}, type={Ph.D. Thesis},
    date = {2010}, 
    
}
 \bib{Hatcher}{book}{
  author = {Hatcher, Allen},
  publisher = {Cambridge University Press},
  title = {Algebraic topology},
  year = {2000}
}

\bib{honkasalolaitinen} {article}{
author ={Honkasalo, Hannu},author={Laitinen, Erkki},
journal = {Osaka J. Math.},
number = {3},
pages = {793--804},
publisher = {Osaka University and Osaka City University, Departments of Mathematics},
title = {Equivariant Lefschetz classes in Alexander-Spanier cohomology},
url = {https://projecteuclid.org:443/euclid.ojm/1200787104},
volume = {33},
year = {1996}
}

\bib{honkasalo}{article}{ title={Equivariant Alexander-Spanier cohomology.}, volume={63}, journal={Mathematica Scandinavica}, author={Honkasalo, Hannu}, year={1988}, pages={179-195} }

\bib{KK}{article}{
	author = {Kapovich, Michael},
	author = {Kleiner, Bruce},
	title = {Coarse {A}lexander duality and duality groups},
	date = {2005},
	issn = {0022-040X},
	journal = {J. Differential Geom.},
	volume = {69},
	number = {2},
	pages = {279\ndash 352},
	url = {http://projecteuclid.org/euclid.jdg/1121449108},
	review = {\MR{2168506 (2007c:57033)}}, }

\bib{msw11}{article}{
	author = {Mosher, Lee},
	author = {Sageev, Michah},
	author = {Whyte, Kevin},
	title = {Quasi-actions on trees {II}: {F}inite depth {B}ass-{S}erre trees},
	date = {2011},
	issn = {0065-9266},
	journal = {Mem. Amer. Math. Soc.},
	volume = {214},
	number = {1008},
	url = {http://dx.doi.org/10.1090/S0065-9266-2011-00585-X},
	review = {\MR{2867450}}, 
	}

\bib{r93}{article}{
	author = {Roe, John},
	title = {Coarse cohomology and index theory on complete {R}iemannian manifolds},
	date = {1993},
	issn = {0065-9266},
	journal = {Mem. Amer. Math. Soc.},
	volume = {104},
	number = {497},
	url = {http://dx.doi.org/10.1090/memo/0497},
	review = {\MR{1147350 (94a:58193)}}, }

\bib{r03}{book}{
	author = {Roe, John},
	title = {Lectures on coarse geometry},
	series = {University Lecture Series}, publisher={American Mathematical Society, Providence, RI},
	date = {2003},
	volume = {31}, ISBN={0-8218-3332-4},
	review = {\MR{2007488 (2004g:53050)}}, }
\bib{VK33}{article}{
  title={Komplexe in euklidischen Raumen},
  author={van Kampen, E. R.},
  journal={ Abh. Math. Sem. Univ.
Hamburg 9},
  year={1933}
  }

\bib{Wulff}{article}{
   title={Equivariant Coarse (Co-)Homology Theories},
   journal={Symmetry, Integrability and Geometry: Methods and Applications},
   publisher={SIGMA (Symmetry, Integrability and Geometry: Methods and Application)},
   author={Wulff, Christopher},
   year={2022},
}
  
\bib{Yoon}{article}
{author = {Yoon, Sung Yil},
journal = {Algebr. Geom. Topol.},
number = {1},
pages = {273--296},
publisher = {MSP},
title = {A lower bound to the action dimension of a group},
url = {https://doi.org/10.2140/agt.2004.4.273},
volume = {4},
year = {2004}
}
\end{biblist}
\end{bibdiv}

\end{document}